\colorlet{MyBlue}{DodgerBlue!75!Black}
\newcommand{\R}{\mathbb{R}}
\newcommand{\N}{\mathbb{N}}
\DeclareMathOperator*{\argmax}{arg\,max}
\DeclareMathOperator*{\argmin}{arg\,min}
\DeclareMathOperator{\bd}{bd}
\DeclareMathOperator{\bigoh}{\mathcal O}
\DeclareMathOperator{\cl}{cl}
\DeclareMathOperator{\ex}{\mathbb{E}}
\DeclareMathOperator{\exclude}{\backslash}
\DeclareMathOperator{\hess}{Hess}
\DeclareMathOperator{\intr}{int}
\DeclareMathOperator{\prob}{\mathbb{P}}
\DeclareMathOperator{\supp}{supp}
\DeclareMathOperator{\tr}{tr}
\DeclareMathOperator*{\union}{\bigcup}
\newcommand{\dd}{\,d}
\newcommand{\eps}{\varepsilon}
\newcommand{\from}{\colon}
\newcommand{\pd}{\partial}
\newcommand{\simplex}{\Delta}
\newcommand{\wilde}{\widetilde}
\newcommand{\abs}[1]{\left\lvert #1 \right\rvert}
\newcommand{\smallabs}[1]{\lvert #1 \rvert}
\newcommand{\norm}[1]{\left\| #1 \right\|}
\newcommand{\smallnorm}[1]{\| #1 \|}
\newcommand{\product}[2]{\left\langle #1,  #2 \right\rangle}
\newcommand{\smallproduct}[2]{\langle #1,  #2 \rangle}
\newcommand{\braket}[2]{\product{#1}{#2}}
\newcommand{\smallbraket}[2]{\smallproduct{#1}{#2}}
\newcommand{\defeq}{\equiv}
\newcommand{\dis}{\displaystyle}
\newcommand{\txs}{\textstyle}
\newcommand{\insum}{\sum\nolimits}
\theoremstyle{plain}
\newtheorem{theorem}{Theorem}
\newtheorem{corollary}[theorem]{Corollary}
\newtheorem*{corollary*}{Corollary}
\newtheorem{lemma}[theorem]{Lemma}
\newtheorem{proposition}[theorem]{Proposition}
\theoremstyle{definition}
\newtheorem{definition}[theorem]{Definition}
\newtheorem*{definition*}{Definition}
\newtheorem{assumption}{Assumption}
\newtheorem*{assumption*}{Assumption}
\theoremstyle{remark}
\newtheorem{remark}{Remark}
\newtheorem*{remark*}{Remark}
\newtheorem{example}{Example}
\numberwithin{equation}{section}
\numberwithin{theorem}{section}
\numberwithin{remark}{section}
\numberwithin{example}{section}
\newcommand{\play}{\mathcal{N}}
\newcommand{\act}{\mathcal{A}}
\newcommand{\pay}{u}
\newcommand{\payv}{v}
\newcommand{\strat}{\mathcal{X}}
\newcommand{\game}{\mathcal{G}}
\newcommand{\brep}{\mathsf{br}}
\newcommand{\eq}{x^{\ast}}
\DeclareMathOperator{\reg}{Reg}
\DeclareMathOperator{\erfc}{erfc}
\newcommand{\choice}{Q}
\newcommand{\fench}{F}
\newcommand{\temp}{\eta}
\newcommand{\depth}{\Omega}
\newcommand{\gibbs}{G}
\newcommand{\eucl}{\Pi}
\newcommand{\set}{\mathcal{S}}
\newcommand{\interval}{I}
\newcommand{\bvec}{e}
\newcommand{\intstrat}{\strat^{\circ}}
\newcommand{\intsimplex}{\simplex^{\!\circ}}
\newcommand{\one}{\mathds{1}}
\newcommand{\graph}{\mathcal{G}}
\newcommand{\vertices}{\mathcal{V}}
\newcommand{\edges}{\mathcal{E}}
\newcommand{\loss}{\ell}
\begin{document}


\title
[Learning in Games with Noisy Payoff Observations]
{On the robustness of learning in games with stochastically perturbed payoff observations}

\author[M.~Bravo]{Mario Bravo}
\address
[M.~Bravo]
{Universidad de Santiago de Chile, Departamento de Matem\'atica y Ciencia de la Computaci\'on, Av.Libertador Bernardo O’Higgins 3363, Santiago, Chile}
\email{\href{mailto:mario.bravo.g@usach.cl}{mario.bravo.g@usach.cl}}

\author[P.~Mertikopoulos]{Panayotis Mertikopoulos}
\address
[P.~Mertikopoulos]
{CNRS (French National Center for Scientific Research), LIG, F-38000 Grenoble, France\\
and
Univ. Grenoble Alpes, LIG, F-38000 Grenoble, France}
\email{\href{mailto:panayotis.mertikopoulos@imag.fr}{panayotis.mertikopoulos@imag.fr}}
\urladdr{\url{http://mescal.imag.fr/membres/panayotis.mertikopoulos}}

\thanks{%
The authors are greatly indebted to Roberto Cominetti for arranging the visit of the second author to the University of Chile and for his many constructive comments.
The authors would also like to express their gratitude to Mathias Staudigl for his many insightful comments and suggestions, to Bill Sandholm, Josef Hofbauer, and Yannick Viossat for helpful discussions, and to two anonymous referees for their detailed remarks and recommendations.}

\thanks{%
Part of this work was carried out during the authors' visit to the Hausdorff Research Institute for Mathematics at the University of Bonn in the framework of the Trimester Program ``Stochastic Dynamics in Economics and Finance'' and during the second author's visit to the University of Chile.
MB was partially supported by Fondecyt grant No. 11151003, and the N\'ucleo Milenio Informaci\'on y Coordinaci\'on en Redes ICM/FIC  RC130003.
PM was partially supported by the French National Research Agency (grant nos. NETLEARN--13--INFR--004 and GAGA--13--JS01--0004--01) and the French National Center for Scientific Research (grant no. REAL.NET--PEPS--JCJC--INS2I--2014.)}

\subjclass[2010]{
Primary 60H10, 37N40, 91A26;
secondary 60H30, 60J70, 91A22}

\keywords{%
Dominated strategies;
learning;
Nash equilibrium;
regret minimization;
regularization;
robustness;
stochastic game dynamics;
stochastic stability}

\newacro{EW}{exponential weight}
\newacro{KKT}{Karush\textendash Kuhn\textendash Tucker}
\newacro{MDP}{Markov decision process}
\newacroplural{MDP}{Markov decision processes}
\newacro{SDE}{stochastic differential equation}
\newacro{ICT}{internally chain transitive}
\newacro{OMD}{online mirror descent}

\begin{abstract}
%
%
Motivated by the scarcity of accurate payoff feedback in practical applications of game theory, we examine a class of learning dynamics where players adjust their choices based on past payoff observations that are subject to noise and random disturbances.
First, in the single-player case (corresponding to an agent trying to adapt to an arbitrarily changing environment), we show that the stochastic dynamics under study lead to no regret almost surely, irrespective of the noise level in the player's observations.
In the multi-player case, we find that dominated strategies become extinct and we show that strict Nash equilibria are stochastically stable and attracting;
conversely, if a state is stable or attracting with positive probability, then it is a Nash equilibrium.
Finally, we provide an averaging principle for $2$-player games, and we show that in zero-sum games with an interior equilibrium, time averages converge to Nash equilibrium for any noise level.
\end{abstract}

\maketitle
\thispagestyle{empty}


\vspace{-2em}
\setcounter{tocdepth}{1}
\tableofcontents

\section{Introduction}
\label{sec:introduction}

A central question in game-theoretic learning is whether the outcome of a learning process (viewed here as a plausible model for the behavior of optimizing agents) is also justifiable from the point of view of rationality \textendash\ e.g. whether the process leads to a Nash equilibrium or a state where no dominated strategies are present.
In that regard, one of the most widely studied learning procedures is the \ac{EW} algorithm that was originally introduced by \cite{Vov90} and \cite{LW94} in the context of multi-armed bandit problems.
In a game-theoretic setting, the algorithm simply prescribes that players score their actions based on their cumulative payoffs and then assign choice probabilities proportionally to the exponential of each action's score.
As such, the \ac{EW} algorithm in continuous time \citep{Sor09} is described by the dynamics
\begin{equation}
\label{eq:EW}
\tag{EW}
\begin{aligned}
\dot y_{k\alpha}
	&= \payv_{k\alpha},
	\\
x_{k\alpha}
	&= \frac{\exp(y_{k\alpha})}{\insum_{\beta} \exp(y_{k\beta})},
\end{aligned}
\end{equation}
where, somewhat informally, $\payv_{k\alpha}$ denotes the payoff to the $\alpha$-th action of player $k$, $y_{k\alpha}$ is the action's performance score (cumulative payoff) over time and $x_{k\alpha}$ is the corresponding mixed strategy weight.

Given its long history and its link with the replicator dynamics of evolutionary game theory (discussed below), the rationality properties of \eqref{eq:EW} are also relatively well understood.
To name the most important ones:
\begin{inparaenum}
[\itshape a\upshape)]
\item
dominated strategies become extinct in the long run;
\item
limits of interior trajectories and stable rest points are Nash equilibria;
\item
strict equilibria are stable and attracting;
and
\item
empirical distributions of play converge to equilibrium in $2$-player zero-sum games with an interior equilibrium \citep{HS98,San10}.
\end{inparaenum}
More recently, \cite{Sor09} also showed that \eqref{eq:EW} is \emph{universally consistent}, i.e. players have no regret for following \eqref{eq:EW} instead of any other fixed strategy in a dynamically changing environment.

On the other hand, a crucial limitation in the above considerations is that players are assumed to have perfect observations of their actions' rewards.
In practical applications of game theory (e.g. in economics, finance and traffic networks), ``perfect feedback'' requirements are often too stringent, especially in games with massively many actions and/or players.
Accordingly, an important question that arises is whether \eqref{eq:EW} retains its rationality properties in the presence of noise and stochastically perturbed payoff observations.
Somewhat surprisingly, this is indeed the case (at least for some of them):
even if the payoffs in \eqref{eq:EW} are perturbed by a Brownian noise term of arbitrarily high variance, dominated strategies become extinct and strict equilibria remain stable and attracting with high probability \citep{MM09,MM10}.
Thus, even when the players' true payoffs are masked by noise and uncertainty, the reinforcement learning principle behind \eqref{eq:EW} allows players to weed out the noise and leads to similar outcomes as in the noiseless, deterministic regime. 

Motivated by the robustness of exponential learning in noisy environments, we examine here a broad class of game-theoretic learning procedures where players adjust their strategies by playing an approximate best response to the vector of their actions' cumulative payoffs \textendash\ possibly subject to random disturbances and noise.
In the case of perfect payoff observations, this scheme boils down to the reinforcement learning dynamics considered by \cite{MS16} who showed that the properties discussed above still hold in the absence of noise.
With this in mind, our main contribution is to show that feedback noise does not really matter:
even if the players' payoff observations are subject to \emph{arbitrarily high} (and possibly state-dependent or correlated) noise, dominated strategies become extinct (roughly at the same rate as in deterministic environments);
strict Nash equilibria remain (stochastically) stable and attracting;
and, in the converse direction,
if a state is (stochastically) stable or attracting with positive probability, then it is also a Nash equilibrium.
Finally, if players use a decreasing learning parameter to adjust the weight of their scoring process over time, the stochastic learning dynamics under study lead to no regret (a.s.) and their time average converges to equilibrium in $2$-player zero-sum games with an interior equilibrium.

Our analysis also highlights an important difference between ``static'' solution concepts (such as Nash equilibria and strategic dominance) and more ``dynamic'' notions (such as regret minimization).
Whereas the speed of convergence to static target states is accelerated by the use of a large, constant learning parameter (noise and disturbances notwithstanding), the rate of regret minimization is optimized by using a learning parameter that decays proportionally to $t^{-1/2}$ (and which guarantees an $\bigoh(\sqrt{t\log\log t})$ bound for the players' cumulative regret under uncertainty).
This disparity only appears in the noisy regime and is due to the fact that players need to be more conservative when facing a fluid environment that varies with time in an (a priori) unpredictable fashion.
Otherwise, if players have access to noiseless payoff observations, they can be signficantly more greedy and achieve lower regret faster by using a constant learning parameter.

\subsection{Related work}

The long-term rationality properties of exponential learning in a game-theoretic setting were first studied in conjunction with those of the \emph{replicator dynamics}, one of the most widely studied dynamical systems for population evolution under natural selection.
Indeed, a simple differentiation of \eqref{eq:EW} reveals that the evolution of the players' mixed strategies under \eqref{eq:EW} follows the differential equation:
\begin{equation}
\label{eq:RD}
\tag{RD}
\dot x_{k\alpha}
	= x_{k\alpha} \left[ \payv_{k\alpha} - \insum_{\beta} x_{k\beta} \payv_{k\beta} \right],
\end{equation}
which is simply the (multi-population) replicator dynamics of \cite{TJ78}.
In this context, \cite{Aki80}, \cite{Nac90} and \cite{SZ92} showed that dominated strategies become extinct, while it is well known that
\begin{inparaenum}
[\itshape a\upshape)]
\item
stable states of \eqref{eq:RD} are Nash equilibria;
\item
strict Nash equilibria are asymptotically stable; 
and
\item
time averages of replicator orbits converge to equilibrium in $2$-player games provided that no strategy share becomes arbitrarily small \citep{HS98,HS03}.
\end{inparaenum}

This two-way relationship between exponential learning and the replicator dynamics was noted early on by \cite{Rus99} in his study of reinforcement learning models for games \textendash\ i.e. learning how to react to a given situation so as to maximize a numerical reward \citep{SB98}.
From a game-theoretic point of view, the learning models of \cite{BS97} and \cite{ER98} are also closely related to the replicator dynamics \textendash\ and, hence, to \eqref{eq:EW} \textendash\ 
while \cite{FL98}, \cite{HS02}, and \cite{Hop02} studied a smooth variant of the well-known fictitious play algorithm where the players play a perturbed best response to their opponents' empirical frequency of play.%
\footnote{For a closely related model, see also \cite{CMS10}.}
Up to mild technical differences, the correlated version of these models can be seen as a noiseless version of our reinforcement learning model, viz. the dynamics of \cite{MS16} with the parameter choice $\temp(t) = 1/t$;
we explore this relation in more detail in Sections \ref{sec:regret} and \ref{sec:averages}.


Of course, a crucial aspect of these considerations is whether players have accurate observations of their actions' rewards or only a noisy estimate thereof:
if the former is not the case, noise and fluctuations could potentially lead to suboptimal outcomes with high probability.
In biology and evolutionary game theory (where payoffs measure the reproductive fitness of a biological species or the average payoff of populations of nonatomic players respectively), \cite{FH92} accounted for such fluctuations by introducing a stochastic variant of the replicator dynamics where evolution is perturbed by ``aggregate shocks'' that reflect the impact of weather-like effects and other perturbations.%
\footnote{\cite{KP06} also consider a Stratonovich-based model while \cite{Vla12} examines the case of random jumps incurred by catastrophic, earthquake-like events}
In this framework, \cite{Cab00}, \cite{Imh05} and \cite{HI09} showed that dominated strategies are still eliminated if the variability of the shocks across different genotypes (strategies) is not too high, while \cite{Imh05} and \cite{HI09} showed that strict Nash equilibria of a modified game are stochastically asymptotically stable under the replicator dynamics with aggregate shocks.%
\footnote{Whether the equilibria of the original game are themselves asymptotically stable, depends on the intensity of the noise on different strategies and also on the exact way that the noise enters the process \citep{MV16}.}

On the other hand, \cite{MM09,MM10} showed that the dynamics obtained by \cite{FH92} do not coincide with the stochastic replicator dynamics induced by \eqref{eq:EW} in the presence of random disturbances and measurement noise \textendash\ in contrast to the noiseless case where \eqref{eq:EW} and \eqref{eq:RD} \emph{do} coincide.
As we mentioned above, this learning variant of the stochastic replicator dynamics actually retains the rationality properties of the deterministic system \eqref{eq:EW}/\eqref{eq:RD} without any caveats on the noise:
dominated strategies become extinct and strict Nash equilibria remain stochastically asymptotically stable without any conditions on the perturbations' magnitude.
This shows that the origin of the noise is crucial in the determination of the dynamics' long-term properties:
whereas the ``aggregate shocks'' of evolutionary environments lead to rational behavior in a modified game, no such modifications are required when learning under uncertainty.

\subsection{Paper outline}

In Section \ref{sec:model}, we present our model for learning in the presence of noise and we derive the system of coupled \acp{SDE} that governs the evolution of the players' mixed strategies.
In Section \ref{sec:regret}, we show that if the players use a smoothly decreasing learning parameter, then the learning dynamics under study lead to no regret (a.s.), whatever the noise level.
In Section \ref{sec:dominated}, we investigate dominated strategies:
we show that dominated strategies become extinct (a.s.) and we derive an explicit bound for their extinction rate.
Section \ref{sec:folk} focuses on the dynamics' long-term stability and convergence properties:
we show that
\begin{inparaenum}
[\itshape a\upshape)]
\item
stochastically (Lyapunov) stable states and states that attract trajectories of play with positive probability are Nash equilibria;
and
\item
strict Nash equilibria are stochastically asymptotically stable, irrespective of the fluctuations' magnitude.
\end{inparaenum}
In Section \ref{sec:averages}, we provide an averaging principle for $2$-player games in the spirit of \cite{HSV09};
thanks to this principle, we then show that empirical distributions of play converge to Nash equilibrium in zero-sum games (again, no matter the noise level).
Finally, in Section \ref{sec:discussion}, we discuss the relaxation of some of the assumptions on the noise process \textendash\ and, in particular, the independence of the observation noise across players and strategies.

To streamline our presentation, we included several numerical illustrations in the main text (Figs. \ref{fig:portraits} and \ref{fig:averages}) and we relegated the more convoluted proofs to a series of appendices at the end.


\section{The model}
\label{sec:model}

After a few preliminaries to set notation and terminology, this section focuses on the basic properties of a broad class of reinforcement learning dynamics under noise and uncertainty.
In the noiseless, deterministic regime (Section \ref{sec:deterministic}), our model essentially boils down to the class of dynamics recently studied by \cite{MS16}.
The full stochastic framework (Section \ref{sec:stochastic}) is then obtained by positing that the agents' observations are perturbed at each moment in time by a zero-mean stochastic process (an Itô diffusion).

\subsection{Preliminaries}
\label{sec:prelims}

\paragraph{Notation}

If $\set = \{s_{\alpha}\}_{\alpha=1}^{n}$ is a finite set, the real vector space generated by $\set$ will be denoted by $\R^{\set}$ and we will write $\{\bvec_{s}\}_{s\in\set}$ for its canonical basis;
for concision, we will also use $\alpha$ to refer interchangeably to $\alpha$ or $s_{\alpha}$, writing e.g. $x_{\alpha}$ instead of $x_{s_{\alpha}}$.
The set $\simplex(\set)$ of probability measures on $\set$ will be identified with the $n$-dimensional simplex $\simplex = \{x\in \R^{\set}: \sum_{\alpha} x_{\alpha} =1 \text{ and }x_{\alpha}\geq 0\}$ and the relative interior of $\simplex$ will be written $\intsimplex$.

If $\{\set_{k}\}_{k\in\play}$ is a finite family of sets, we will use the shorthand $(\alpha_{k};\alpha_{-k})$ for the tuple $(\dotsc,\alpha_{k-1},\alpha_{k},\alpha_{k+1},\dotsc)$ and we will write $\sum_{\alpha}^{k}$ instead of $\sum_{\alpha\in\set_{k}}$.
Finally, any statement of the form $f(t) = o(g(t))$ for a stochastic process $f$ should be interpreted in the a.s. sense, i.e. ``for every $C>0$, there exists a.s. some (random) $t_{0}$ such that $\abs{f(t)} \leq C \abs{g(t)}$ for all $t\geq t_{0}$'' \textendash\ and likewise for the statement ``$f(t) = \bigoh(g(t))$''.

\paragraph{Definitions from game theory}

A \emph{finite game in normal form} is a tuple $\game \defeq \game(\play,\act,\pay)$ consisting of
\begin{inparaenum}[\itshape a\upshape)]
\item
a finite set of \emph{players} $\play = \{1,\dotsc,N\}$;
\item
a finite set $\act_{k}$ of \emph{actions} (or \emph{pure strategies}) per player $k\in\play$;
and
\item
the players' \emph{payoff functions} $\pay_{k}\from \act\to \R$,
where $\act\equiv\prod_{k}\act_{k}$ denotes the set of all joint action profiles $(\alpha_{1},\dotsc,\alpha_{N})$.
\end{inparaenum}
The set of \emph{mixed strategies} of player $k$ is denoted by $\strat_{k}\equiv\simplex(\act_{k})$ and the space $\strat\equiv\prod_{k}\strat_{k}$ of \emph{mixed strategy profiles} $x = (x_{1},\dotsc,x_{N})$ will be called the game's \emph{strategy space}.
In this mixed context, the expected payoff of player $k$ in the strategy profile $x = (x_{1},\dotsc,x_{N})\in \strat$ is
\begin{equation}
\pay_{k}(x)
	= \insum_{\alpha_{1}}^{1}\dotsi \insum_{\alpha_{N}}^{N}
	\pay_{k}(\alpha_{1},\dotsc,\alpha_{N}) \; x_{1,\alpha_{1}} \dotsm\, x_{N,\alpha_{N}},
\end{equation}
where, in a slight abuse of notation, $\pay_{k}(\alpha_{1},\dotsc,\alpha_{N})$ denotes the payoff of player $k$ in the profile $(\alpha_{1},\dotsc,\alpha_{N})\in\act$.
Accordingly, the payoff corresponding to $\alpha\in\act_{k}$ in the mixed strategy profile $x\in\strat$ is
\begin{flalign}
\payv_{k\alpha}(x)
	\equiv \pay_{k}(\alpha;x_{-k})
	&= \insum_{\alpha_{1}}^{1} \dotsi \insum_{\alpha_{N}}^{N}
	\pay_{k}(\alpha_{1},\dotsc,\alpha_{N})
	\; x_{1,\alpha_{1}} \dotsm\,\delta_{\alpha_{k},\alpha} \dotsm\, x_{N,\alpha_{N}},
\end{flalign}
leading to the more concise expression
\begin{equation}
\pay_{k}(x)
	= \insum_{\alpha}^{k} x_{k\alpha} \payv_{k\alpha}(x)
	= \braket{\payv_{k}(x)}{x_{k}}
\end{equation}
where $\payv_{k}(x) = (\payv_{k\alpha}(x))_{\alpha\in\act_{k}}$ denotes the \emph{payoff vector} of player $k$ at $x\in\strat$.

\subsection{The deterministic model}
\label{sec:deterministic}

Our learning model will be based on the following simple idea:
the game's players ``score'' their actions by keeping track of their cumulative payoffs and then, at each moment $t\geq0$, they play an ``approximate'' best response to this vector of performance scores
(thus reinforcing the probability of playing an action with a higher overall payoff).
More precisely, following \cite{MS16}, this process can be described by the dynamics
\begin{equation}
\label{eq:RL}
\tag{RL}
\begin{aligned}
y_{k}(t)
	&= \int_{0}^{t} \payv_{k}(x(s)) \dd s,
	\\
x_{k}(t)
	&= \choice_{k}(\temp_{k}(t) y_{k}(t)),
\end{aligned}
\end{equation}
where:
\begin{enumerate}
\item
the \emph{score vector} $y_{k}(t) = (y_{k\alpha}(t))_{\alpha\in\act_{k}}$ of player $k$ ranks each strategy $\alpha\in\act_{k}$ based on its cumulative payoff $y_{k\alpha}(t) = \int_{0}^{t} \payv_{k\alpha}(x(s)) \dd s$ up to time $t$.
\item
$\choice_{k}(y_{k})$
is a \emph{choice map} which reinforces the strategies with the highest scores (see below for a rigorous definition).
\item
$\temp_{k}(t)>0$ is a \emph{learning parameter} which can be tuned freely by each player.
\end{enumerate}

Clearly, a natural choice for the ``scores-to-strategies'' map $\choice_{k}$ would be to take the $\argmax$ correspondence $y_{k} \mapsto\argmax_{x_{k} \in\strat_{k}} \braket{y_{k}}{x_{k}}$, i.e. to greedily assign all weight to the strategy (or strategies) with the highest score.
However, since the $\argmax$ operator is multi-valued (so each player would also need to employ some tie-breaking rule to resolve ambiguities), we will focus on \emph{single-valued} choice maps of the general form
\begin{equation}
\label{eq:choice}
\choice_{k}(y_{k})
	= \argmax_{x_{k}\in\strat_{k}}\{\braket{y_{k}}{x_{k}} - h_{k}(x_{k})\},
\end{equation}
where the \emph{penalty function} $h_{k}\from\strat_{k}\to\R$ satisfies the following properties:
\begin{enumerate}
[\indent\,\itshape a\upshape)]
\setlength{\itemsep}{0pt}
\item
$h_{k}$ is continuous on $\strat_{k}$.
\item
$h_{k}$ is smooth on the relative interior of every face of $\strat_{k}$.%
\item
$h_{k}$ is \emph{strongly convex} on $\strat_{k}$, i.e. there exists some $K>0$ such that
\begin{equation}
\label{eq:strong}
h_{k}(tx_{k} + (1-t) x_{k}')
	\leq th_{k}(x_{k}) + (1-t) h_{k}(x_{k}') - \tfrac{1}{2} K t (1-t) \smallnorm{x_{k}' - x_{k}}^{2},
\end{equation}
for all $x_{k},x_{k}' \in \strat_{k}$ and for all $t\in[0,1]$.
\end{enumerate}

This ``softening'' of the $\argmax$ operator has a long history in game theory and optimization, and the induced map $\choice_{k}$ is intimately related to the notion of \emph{softmax} or \emph{perturbed/re\-gu\-la\-rized best response maps};
for an in-depth discussion, we refer the reader to \cite{HS02} and \cite{MS16}.
For our immediate purposes, the key observation is that \eqref{eq:choice} is a strictly concave problem, so it admits a unique solution for every input vector $y_{k}$.
Therefore, when the penalty term $h_{k}$ is small relative to $y_{k}$, $\choice_{k}(y_{k})$ can be seen as a single-valued approximation of the standard best response correspondence $y_{k} \mapsto \argmax_{x_{k}\in\strat_{k}} \braket{y_{k}}{x_{k}}$.

Regarding the learning parameter $\temp_{k}$, its role in \eqref{eq:RL} is to temper the growth of the cumulative payoff vector $y_{k}(t)$ so as to allow the player to better explore his strategies (instead of prematurely reinforcing one or another).
In other words, $\temp_{k}$ can be interpreted as an extrinsic weight that the player assigns to cumulative observations of past payoffs.%
\footnote{The role of the learning parameter $\temp(t)$ can also be linked to the vanishing step-size rules that are used in the theory of stochastic approximation \textendash\ see e.g. \cite{Ben99}, \cite{LPT04}, \cite{OR14} and references therein.
The difference between the two is that, in the theory of stochastic approximation, a variable step-size means that new information enters the algorithm with decreasing weight;
on the other hand, in our context, all information is weighted evenly, but the \emph{aggregate} information is weighted by $\temp(t)$ to avoid extreme behaviors.}
Accordingly, given that $y(t)$ grows (at most) as $\bigoh(t)$, we will assume throughout that:
\begin{assumption}
\label{asm:temp}
$\temp_{k}(t)$ is $C^{1}$-smooth, nonincreasing and $\lim_{t\to\infty} t\temp_{k}(t) = +\infty$.
\end{assumption}

The decay rate of $\temp(t)$ will play a crucial role when the players' payoff observations are subject to stochastic perturbations, an issue that we will explore in detail in later sections.
For now, we turn to two representative examples of  \eqref{eq:RL}:

\begin{example}
\label{ex:logit}
The prototype penalty function on the simplex is the Gibbs (negative) entropy $h(x) = \insum_{\alpha} x_{\alpha} \log x_{\alpha}$ which, after a standard calculation, yields the so-called \emph{logit map}:
\begin{equation}
\label{eq:logit}
\gibbs_{\alpha}(y)
	= \frac{\exp(y_{\alpha})}{\insum_{\beta} \exp(y_{\beta})}.
\end{equation}
An easy differentiation then yields
\begin{equation}
\tag{\ref*{eq:RD}}
\dot x_{k\alpha}
	= \frac{e^{y_{k\alpha}} \dot y_{k\alpha}}{\insum_{\beta}^{k} e^{y_{k\beta}}}
	- \frac{e^{y_{k\alpha}} \insum_{\beta}^{k} e^{y_{k\beta}} \dot y_{k\beta}}{\left( \insum_{\beta}^{k} e^{y_{k\beta}} \right)^{2}}
	= x_{k\alpha} \left[ \payv_{k\alpha}(x) - \insum_{\beta}^{k} x_{k\beta}\,\payv_{k\beta}(x) \right],
\end{equation}
which is simply the (multi-population) replicator equation of \cite{TJ78} for population evolution under natural selection.
For a more thorough treatment of the links between logit choice and the replicator dynamics, see \cite{HSV09}, \cite{MM10}, and references therein.
\end{example}

\begin{example}
\label{ex:proj}
As another example, consider the penalty function $h(x) = \frac{1}{2} \insum_{\alpha} x_{\alpha}^{2}$.
This quadratic penalty function leads to the \emph{projected choice map}
\begin{equation}
\label{eq:Eucl}
\eucl(y)
	= \argmin\nolimits_{x\in\simplex} \big\{ \braket{y}{x} - \tfrac{1}{2} \norm{x}^{2} \big\}
	= \argmin\nolimits_{x\in\simplex} \norm{y-x}^{2},
\end{equation}
and, as was shown by \cite{MS16}, the induced trajectories $x(t) = \eucl(y(t))$ of \eqref{eq:RL} satisfy the so-called \emph{projection dynamics}
\begin{equation}
\label{eq:PD}
\tag{PD}
\dot x_{k\alpha}
	=\begin{cases}
		\payv_{k\alpha}(x) - \abs{\supp(x_{k})}^{-1} \insum_{\beta\in\supp(x_{k})} \payv_{k\beta}(x)
		&\quad
		\text{if $x_{k\alpha}>0$,}
		\\
		0
		&\quad
		\text{otherwise,}
	\end{cases}
\end{equation}
on an open dense set of times (in particular, except when the support of $x(t)$ changes).
The dynamics \eqref{eq:PD} were introduced in game theory by \cite{Fri91} as a geometric model of the evolution of play in population games;
for a closely related model (but with different long-term properties), see \cite{NZ97} and \cite{LS08}.
\end{example}

\paragraph{Related models}


The reinforcement mechanism of \eqref{eq:RL} is seen quite clearly in the work of  \cite{Vov90} and \cite{LW94} on multi-armed bandits.
Therein, the agent is faced with a repeated decision process (e.g. choosing a slot machine in the eponymous problem) and, at each stage, he selects an action with probability exponentially proportional to an estimate of said action's cumulative payoff up to that time.
In this way, the mean dynamics of the agent's learning process boil down to the exponential learning scheme \eqref{eq:EW} of Example \ref{ex:logit} \textendash\ itself a special case of \eqref{eq:RL}.


\cite{LC05} and \cite{CGM15} also considered a reinforcement learning process where players discount past observations by a constant multiplicative factor and then play a perturbed best response to the resulting cumulative payoff vector \textendash\ or estimate thereof.
The mean dynamics that describe this process in continuous time are then given by \eqref{eq:RL} with constant $\temp$ and an additional adjustment term that accounts for the exponential discounting of past observations.
For a more detailed survey of the surrounding literature, we refer the reader to \cite{FL98} and \cite{MS16}.

\subsection{Learning in the presence of noise}
\label{sec:stochastic}

A key assumption underlying the reinforcement learning scheme \eqref{eq:RL} is that the players' payoff observations are impervious to any sort of exogenous random noise.
However, this assumption is rarely met in practical applications of game-theoretic learning:
for instance, in telecommunication networks and traffic engineering, signal strength and latency measurements are constantly subject to stochastic fluctuations which introduce noise to the input of any learning algorithm \citep{KMT98,KKLW09}.
Thus, to account for the lack of accurate payoff information in settings where uncertainty is an issue, we will consider the \emph{stochastically perturbed reinforcement learning} process:
\begin{equation}
\label{eq:SRL}
\tag{SRL}
\begin{aligned}
dY_{k\alpha}
	&= \payv_{k\alpha}(X) \dd t
	+ \sigma_{k\alpha}(X) \dd W_{k\alpha},
	\\
X_{k}
	&= \choice_{k}(\temp_{k} Y_{k}),
\end{aligned}
\end{equation}
where $W_{k\alpha}$ is a family of standard Wiener processes (assumed independent across players and strategies) and the diffusion coefficients $\sigma_{k\alpha}\from\strat\to\R$ (assumed Lipschitz) measure the strength of the noise process.

Intuitively, the random component of \eqref{eq:SRL} means that the players' observed payoffs are only accurate up to an error term of the order of $\sigma_{k\alpha}$, possibly depending on the players' mixed strategies but otherwise uncorrelated over players and strategies.%
\footnote{An alternative source of stochasticity could be any inherent randomness in the players' payoffs \textendash\ for instance, if there is a random component to the players' payoffs.
A detailed analysis of this case would require a careful reformulation of the underlying game which, for simplicity, we do not attempt here;
for a related treatment in an evolutionary context, see \cite{MV16}.}
As such, \eqref{eq:SRL} should be viewed as a special instance of the more general case where observation errors exhibit correlations between different actions:
for instance, if each player's action corresponds to a choice of route in a congestion game, any two routes that overlap will exhibit such correlations.
The impact of including such correlations in our model is discussed at length in Section \ref{sec:discussion};
however, for simplicity, our analysis will be stated in the uncorrelated case.

Regarding the existence and uniqueness of solutions to \eqref{eq:SRL}, Proposition \ref{prop:Fenchel} in Appendix \ref{app:auxiliary} shows that the players' choice maps $\choice_{k}$ are Lipschitz continuous, so \eqref{eq:SRL} admits a unique (strong) solution $Y(t)$ for every initial condition $Y(0)$.
Standard arguments can then be used to show that these solutions exist for all time (a.s.),
so the players' mixed strategy profile $X(t) = \choice(\temp(t) Y(t))$ is also fully determined for all $t\geq0$.
However, since this is a somewhat indirect description of the evolution of $X(t)$,
our first task will be to derive the governing dynamics of $X(t)$ in the form of a \acl{SDE} stated directly on $\strat$.

For simplicity, we only present here the special case where each player's penalty function is of the \emph{separable} form:
\begin{equation}
\label{eq:decomposable}
h_{k}(x_{k})
	= \insum_{\alpha}^{k} \theta_{k}(x_{k\alpha})
\end{equation}
for some strongly convex \emph{kernel function} $\theta_{k}\in C^{0}[0,1] \cap C^{3}(0,1]$.
We then get:

\begin{proposition}
\label{prop:evolution}
Let $X(t)$ be an orbit of \eqref{eq:SRL} in $\strat$ and let $\interval$ be a random open interval such that $\supp(X(t))$ remains constant over $\interval$.
Then, for all $t\in\interval$, $X(t)$ satisfies the \acl{SDE}
\begin{subequations}
\label{eq:evolution}
\begin{flalign}
dX_{k\alpha}
	&\label{eq:evolution-core}
	= \frac{\temp_{k}}{\theta_{k\alpha}''}
	\left[
	\payv_{k\alpha} - \Theta_{k}'' \insum_{\beta} \payv_{k\beta} \big/\theta_{k\beta}''
	\right] dt
	\\
	&\label{eq:evolution-noise}
	+ \frac{\temp_{k}}{\theta_{k\alpha}''}
	\left[
	\sigma_{k\alpha} \dd W_{k\alpha} - \Theta_{k}'' \insum_{\beta} \sigma_{k\beta} \big/ \theta_{k\beta}'' \dd W_{k\beta}
	\right]
	\\
	&\label{eq:evolution-temp}
	+\frac{\dot \temp_{k}}{\temp_{k}} \frac{1}{\theta_{k\alpha}''}
	\left[
	\theta_{k\alpha}' - \Theta_{k}'' \insum_{\beta} \theta_{k\beta}' \big/ \theta_{k\beta}''
	\right] dt
	\\
	&\label{eq:evolution-Ito}
	-\frac{1}{2} \frac{1}{\theta_{k\alpha}''}
	\left[ \theta_{k\alpha}''' U_{k\alpha}^2 - \Theta_{k}'' \insum_{\beta} \theta_{k\beta}'''/\theta_{k\beta}''\, U_{k\beta}^2
	\right] dt,
\end{flalign}
\end{subequations}
where all summations are taken over $\beta\in\supp(X_{k})$ and:
\begin{enumerate}
[\indent\itshape a\upshape)]
\setlength{\itemsep}{1pt}
\item
$\theta_{k\alpha}' = \theta_{k}'(X_{k\alpha})$, $\theta_{k\alpha}'' = \theta_{k}''(X_{k\alpha})$, $\theta_{k\alpha}''' = \theta_{k}'''(X_{k\alpha})$,
\item
$\Theta_{k}'' = \left(\insum_{\beta} 1/\theta_{k\beta}''\right)^{-1}$,
\item
\(
\dis
U_{k\alpha}^{2}
	= \left(\frac{\temp_{k}}{\theta_{k\alpha}''}\right)^{2}
	\left[
		\sigma_{k\alpha}^{2} \left(1 - \Theta_{k}''\big/\theta_{k\alpha}''\right)^{2}
		+ \insum_{\beta\neq\alpha}
		\left( \Theta_{k}''\big/\theta_{k\beta}'' \right )^{2} \sigma_{k\beta}^{2}
	\right]
\).
\end{enumerate}
\vspace{1ex}
In particular, if $\lim_{z\to0^{+}} \theta_{k}'(z) = -\infty$ for all $k\in\play$, $X(t)$ is an ordinary \textup(strong\textup) solution of \eqref{eq:evolution};
otherwise, $X(t)$ satisfies \eqref{eq:evolution} on a random open dense subset of $[0,\infty)$.
\end{proposition}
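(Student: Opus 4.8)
The plan is to work on the random interval $\interval$ where the support $S_{k} = \supp(X_{k})$ is frozen and to exploit the first-order optimality (\ac{KKT}) characterization of the separable choice map. On the relative interior of the face of $\strat_{k}$ spanned by $S_{k}$, the program in \eqref{eq:choice} is smooth and strictly concave, so its unique maximizer $X_{k} = \choice_{k}(\temp_{k} Y_{k})$ is pinned down by the stationarity conditions $\theta_{k}'(X_{k\alpha}) = \temp_{k} Y_{k\alpha} - \lambda_{k}$ for $\alpha\in S_{k}$, together with $\insum_{\alpha} X_{k\alpha} = 1$, where $\lambda_{k} = \lambda_{k}(t)$ is the scalar Lagrange multiplier attached to the simplex constraint. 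Since $\theta_{k}\in C^{3}(0,1]$ and the Hessian is nondegenerate on the face ($\theta_{k}'' > 0$), the implicit function theorem shows that $\choice_{k}$ (and $\lambda_{k}$ as a function of the scores) is $C^{2}$ on the relevant open set; composing with the Itô process $\temp_{k} Y_{k}$, I conclude that each $X_{k\alpha}$ is itself an Itô process on $\interval$, which legitimizes the manipulations below.

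Next I would differentiate the stationarity relation. Applying Itô's formula to the left-hand side and using that $\temp_{k}$ is deterministic and of finite variation (hence contributes no quadratic variation and no cross-variation with $Y$), I obtain
\begin{equation*}
\theta_{k\alpha}'' \dd X_{k\alpha} + \tfrac{1}{2}\theta_{k\alpha}''' (\dd X_{k\alpha})^{2} = \dot\temp_{k} Y_{k\alpha}\dd t + \temp_{k}\dd Y_{k\alpha} - \dd\lambda_{k},
\end{equation*}
after which I substitute $\dd Y_{k\alpha} = \payv_{k\alpha}\dd t + \sigma_{k\alpha}\dd W_{k\alpha}$ and replace $\temp_{k} Y_{k\alpha}$ by $\theta_{k\alpha}' + \lambda_{k}$ from stationarity (so $\dot\temp_{k} Y_{k\alpha} = (\dot\temp_{k}/\temp_{k})(\theta_{k\alpha}' + \lambda_{k})$). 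This produces an implicit expression for $\dd X_{k\alpha}$ in terms of the still-unknown multiplier differential $\dd\lambda_{k}$ and the quadratic variation $(\dd X_{k\alpha})^{2}$.

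The crux is to eliminate $\dd\lambda_{k}$ using the frozen-support constraint $\insum_{\alpha} X_{k\alpha} = 1$, which forces $\insum_{\alpha}\dd X_{k\alpha} = 0$ in both drift and martingale parts. Dividing the relation by $\theta_{k\alpha}''$ and summing over $\alpha\in S_{k}$ produces the harmonic weight $\Theta_{k}'' = (\insum_{\beta} 1/\theta_{k\beta}'')^{-1}$ and lets me solve for $\dd\lambda_{k}$; back-substitution then renders $\dd X_{k\alpha}$ explicit. Reading off the martingale part yields \eqref{eq:evolution-noise} at once, and because the $W_{k\beta}$ are independent, squaring the martingale coefficients identifies the quadratic variation as exactly $(\dd X_{k\alpha})^{2} = U_{k\alpha}^{2}\dd t$. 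Collecting the drift then splits into three pieces: the payoff contribution $\temp_{k}\payv_{k\alpha}$ gives \eqref{eq:evolution-core}; the learning-rate contribution $(\dot\temp_{k}/\temp_{k})(\theta_{k\alpha}' + \lambda_{k})$ gives \eqref{eq:evolution-temp} once one checks that the multiplier cancels under the weighting, using $\Theta_{k}''\insum_{\beta}(\theta_{k\beta}'+\lambda_{k})/\theta_{k\beta}'' = \Theta_{k}''\insum_{\beta}\theta_{k\beta}'/\theta_{k\beta}'' + \lambda_{k}$; and the Itô correction $-\tfrac{1}{2}\theta_{k\alpha}''' U_{k\alpha}^{2}$ gives \eqref{eq:evolution-Ito}.

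I expect the main obstacle to be the self-referential status of the Itô-correction term: $(\dd X_{k\alpha})^{2}$ sits inside the drift, yet it can only be computed from the diffusion part of $\dd X_{k\alpha}$, which in turn requires having already isolated the martingale part of $\dd\lambda_{k}$. The resolution is to determine the martingale components first (they do not involve the drift), use independence of the noises to evaluate $(\dd X_{k\alpha})^{2} = U_{k\alpha}^{2}\dd t$, and only then close the drift equation. For the final dichotomy, I would argue that steepness $\lim_{z\to 0^{+}}\theta_{k}'(z) = -\infty$ forces $\choice_{k}$ to take values in $\intsimplex$, so $\supp(X_{k}) = \act_{k}$ for all $t$, $\interval = [0,\infty)$, and \eqref{eq:evolution} holds globally as a strong solution; in the non-steep case the support may drop, and since the set of times of locally constant support is open and, by finiteness of the possible supports together with continuity of $X$, dense, \eqref{eq:evolution} holds on a random open dense subset of $[0,\infty)$, as in \cite{MS16}.
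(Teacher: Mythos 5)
Your proposal is correct and follows essentially the same route as the paper's proof: differentiate the \ac{KKT} stationarity relation via Itô's formula, eliminate the multiplier differential by dividing by $\theta''_{k\alpha}$ and summing over the frozen support (which produces the harmonic weight $\Theta''_{k}$), resolve the quadratic-variation circularity by identifying the martingale coefficients first, and conclude with the same steep/non-steep dichotomy. The only cosmetic difference is your normalization $\lambda_{k} = \temp_{k}\zeta_{k}$ of the multiplier, which makes $\lambda_{k}$ appear explicitly in the drift through $\dot\temp_{k} Y_{k\alpha}$ and forces the cancellation check you correctly carry out, whereas the paper's choice $\zeta_{k} = Y_{k\alpha} - \temp_{k}^{-1}\theta'_{k\alpha}$ avoids it.
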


The proof of Proposition \ref{prop:evolution} is a simple but interesting application of It\^o's formula;
to streamline our presentation, we relegate it to Appendix~\ref{app:main} and we focus here on some remarks and examples (see also Fig.~\ref{fig:portraits} for some sample trajectories of the process):


\begin{remark}
Even though the dynamics \eqref{eq:evolution} appear quite convoluted, each of the constituent terms \eqref{eq:evolution-core}\textendash\eqref{eq:evolution-Ito} admits a relatively simple interpretation:
\begin{enumerate}
[\indent\itshape a\upshape)]
\setlength{\itemsep}{0pt}
\item
The term \eqref{eq:evolution-core} drives the process in the case $\sigma=0$, $\temp=\text{constant}$.
These are the deterministic dynamics studied by \cite{MS16} and correspond to learning in settings with no uncertainty.

\item
The diffusion term \eqref{eq:evolution-noise} reflects the direct impact of the noise on \eqref{eq:SRL}.

\item
The term \eqref{eq:evolution-temp} is due to the variability of the players' learning rate $\temp$ so its impact on \eqref{eq:evolution} vanishes if $\temp(t)\to0$ sufficiently fast.
On the other hand, this term obviously persists even if there is no noise in the players' learning process.

\item
Finally, the term \eqref{eq:evolution-Ito} is the Itô correction induced on $X_{k}$ through \eqref{eq:SRL} due to the non-anticipative nature of the Itô integral.%
\footnote{If \eqref{eq:SRL} had been formulated as a Stratonovich equation, \eqref{eq:evolution-Ito} would vanish;
however, the future-anticipating nature of the Stratonovich integral \citep{vK81} is not well-suited for our purposes.}
Importantly, this term depends on $\temp$ but not $\dot\temp$, so it does not vanish for constant $\temp$.
\end{enumerate}
\end{remark}

\begin{example}
As we saw in Example \ref{ex:logit}, the replicator dynamics \eqref{eq:RD} correspond to the entropic kernel $\theta(x) = x \log x$. In this case, \eqref{eq:evolution} leads to the following stochastic variant of the replicator dynamics:
\begin{equation}
\label{eq:SRD}
\tag{SRD}
\begin{aligned}
dX_{k\alpha}
	&= \temp_{k} X_{k\alpha}
	\left[
	\payv_{k\alpha} - \insum_{\beta}^{k} X_{k\beta}\,\payv_{k\beta}
	\right] dt
	\\
	&+ \temp_{k} X_{k\alpha}
	\left[
	\sigma_{k\alpha} \dd W_{k\alpha} - \insum_{\beta}^{k} \sigma_{k\beta} X_{k\beta} \dd W_{k\beta}
	\right]
	\\
	&+\frac{\dot \temp_{k}}{\temp_{k}} X_{k\alpha}
	\left[
	\log X_{k\alpha} - \insum_{\beta}^{k} X_{k\beta} \log X_{k\beta}
	\right] dt
	\\
	&+\frac{\temp_{k}^{2}}{2} X_{k\alpha}
	\left[
	\sigma_{k\alpha}^{2} (1 - 2 X_{k\alpha}) - \insum_{\beta}^{k} \sigma_{k\beta}^{2} X_{k\beta}\,(1 - 2 X_{k\beta})
	\right] dt.
\end{aligned}
\end{equation}
For constant $\temp$, \eqref{eq:SRD} is simply the stochastic replicator dynamics of exponential learning studied by \cite{MM10}.
As such, \eqref{eq:SRD} should be contrasted to the evolutionary \emph{replicator dynamics with aggregate shocks} of \cite{FH92}:
\begin{equation}
\label{eq:ASRD}
\tag{ASRD}
\begin{aligned}
dX_{k\alpha}
	&= X_{k\alpha} \left[ \payv_{k\alpha} - \insum_{\beta}^{k} X_{k\beta} \payv_{k\beta} \right] dt
	\\
	&+ X_{k\alpha} \left[
	\sigma_{k\alpha} dW_{k\alpha} - \insum_{\beta}^{k} \sigma_{k\beta} X_{k\beta} \dd W_{k\beta}
	\right]
	\\
	&- X_{k\alpha} \left[
	\sigma_{k\alpha}^{2} X_{k\alpha} - \insum_{\beta}^{k} \sigma_{k\beta}^{2} X_{k\beta}^{2}
	\right] dt,
\end{aligned}
\end{equation}
where $X_{k\alpha}$ denotes the population share of the $\alpha$-th genotype of species $k$ in a multi-species environment, $\payv_{k\alpha}$ represents its reproductive fitness, and the noise coefficients $\sigma_{k\alpha}$ measure the impact of random weather-like effects on population evolution.%
\footnote{For a comprehensive account of the literature surrounding \eqref{eq:ASRD}, see \cite{HI09}, \cite{MV16}, and references therein.
In addition to the works mentioned above, \cite{KP06} study a Stratonovich-based formulation of \eqref{eq:ASRD} while \cite{Vla12} also considers random jumps induced by discontinuous, earthquake-like events.}

Besides the absence of the learning rate $\temp$, the fundamental difference between \eqref{eq:SRD} and \eqref{eq:ASRD} is in their Itô correction:
as we shall see in what follows, this term leads to a drastically different long-term behavior and highlights an important contrast between learning and evolution in the presence of noise.
\end{example}

\begin{example}
In the case of the projected reinforcement learning scheme \eqref{eq:PD}, substituting $\theta(x) = x^{2}/2$ in \eqref{eq:evolution} yields the \emph{stochastic projection dynamics}:
\begin{equation}
\label{eq:SPD}
\tag{SPD}
\begin{aligned}
dX_{k\alpha}
	&= \left[
	\payv_{k\alpha} - \abs{\supp(X_{k})}^{-1} \insum_{\beta\in\supp(X_{k})} \payv_{k\beta}
	\right] dt
	\\
	&+ \left[
	\sigma_{k\alpha} \dd W_{k\alpha} - \abs{\supp(X_{k})}^{-1} \insum_{\beta\in\supp(X_{k})} \sigma_{k\beta} \dd W_{k\beta}
	\right]
	\\
	&+\frac{\dot \temp_{k}}{\temp_{k}}
	\left[
	X_{k\alpha} - \abs{\supp(X_{k})}^{-1}
	\right] dt.
\end{aligned}
\end{equation}
There are two important qualitative differences between \eqref{eq:SRD} and \eqref{eq:SPD}:
first, \eqref{eq:SRD} holds for all $t\geq0$ whereas \eqref{eq:SPD} describes the solution orbits of \eqref{eq:SRL} only on intervals over which the support of $X(t)$ remains constant.
Second, the projection mapping $\eucl$ of \eqref{eq:Eucl} is piecewise linear, so there is no Itô correction in \eqref{eq:SPD};
accordingly, the distinction between Itô and Stratonovich perturbations becomes void in the context of \eqref{eq:SPD}.
\end{example}

\begin{figure}[t]
\subfigure{
\label{fig:portraits-XL}
\includegraphics[width=.48\textwidth]{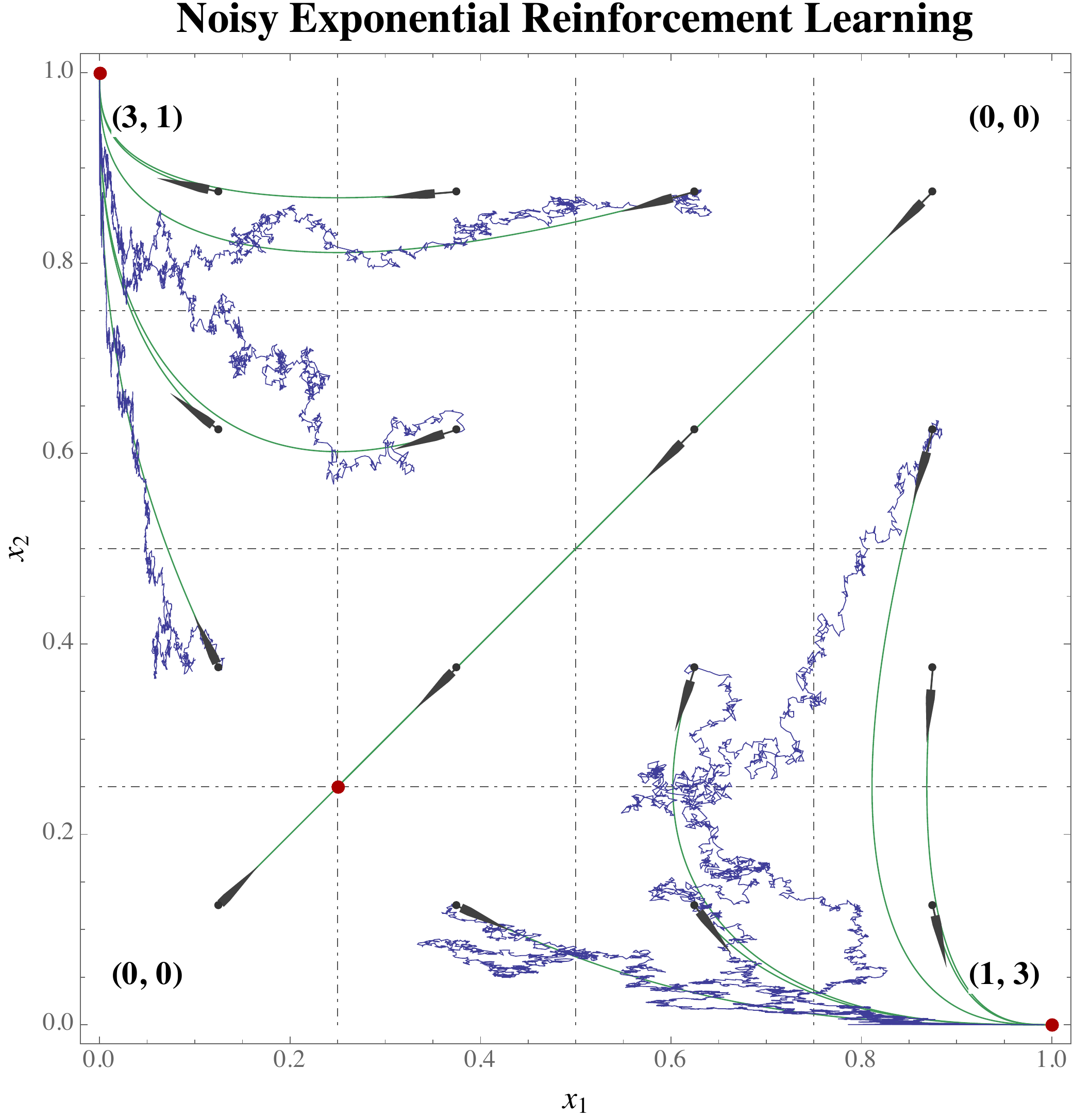}}
\hfill
\subfigure{
\label{fig:portraits-PL}
\includegraphics[width=.48\textwidth]{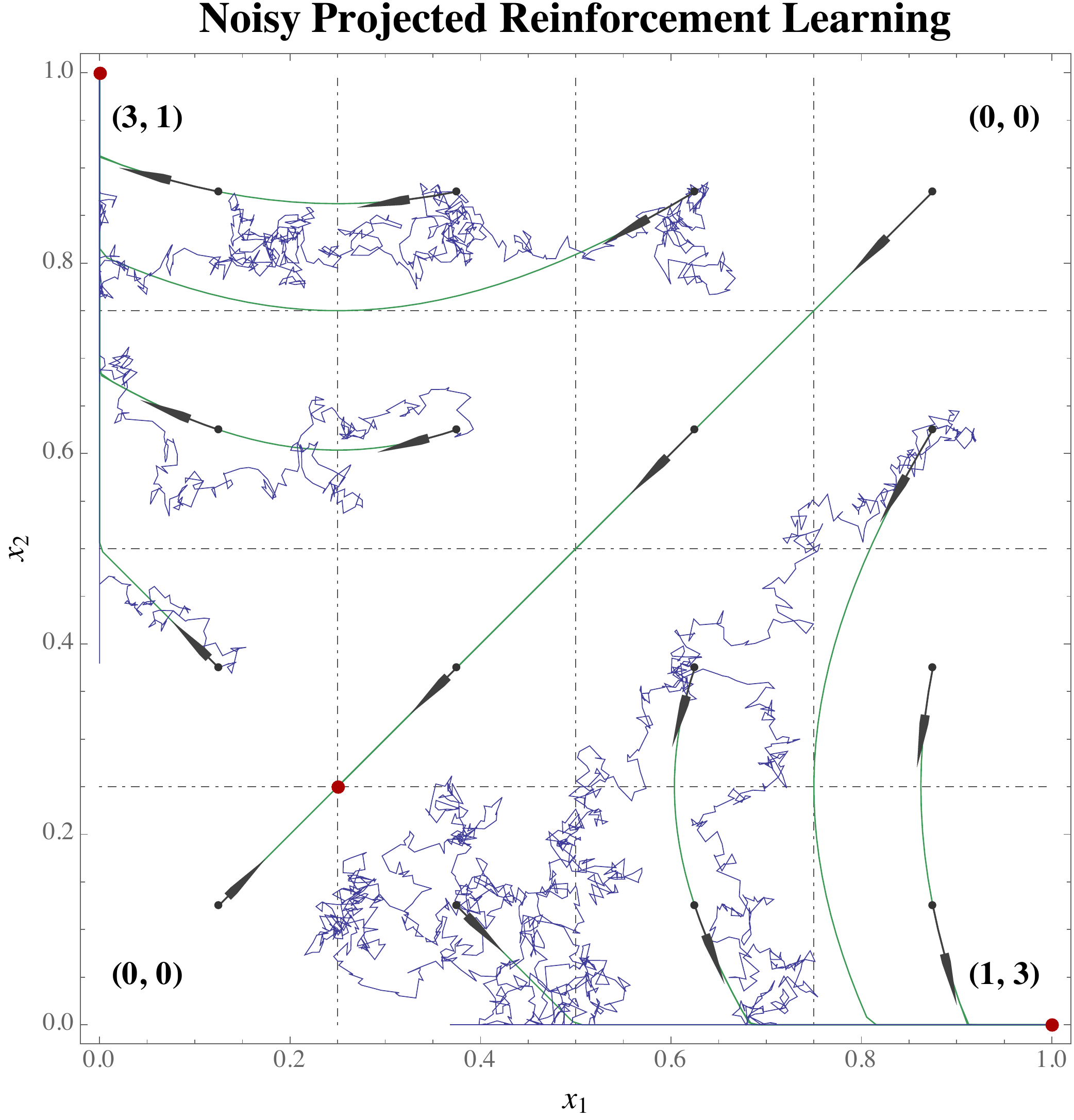}}
\quad

\caption{\footnotesize
Evolution of play under \eqref{eq:SRL} with logit \textup(left\textup) and choice maps \textup(right\textup) for a $2\times2$ congestion game
\textup(Nash equilibria are depicted in red; for the game's payoffs, see the vertex labels\textup).
For comparison purposes, we took $\sigma_{k\alpha}=1$ for all $\alpha\in\act_{k}$, $k=1,2$, and we used the same Wiener process realization in both cases:
the orbits of projected reinforcement learning attain the boundary of $\strat$ and converge to Nash equilibrium much faster than in the case of exponential learning.}
\label{fig:portraits}
\end{figure}


\section{Regret minimization}
\label{sec:regret}

We begin our rationality analysis in the case where there is a single player whose payoffs are determined by the state of his environment \textendash\ which, in turn, may evolve \emph{arbitrarily} over time (including adversarially if the player is playing against an opponent).

More precisely, consider a decision process where, at each $t\geq0$, the player chooses an action from a finite set $\act$ according to some mixed strategy $x(t)\in\strat\equiv\simplex(\act)$ and his expected payoff is determined by the (a priori unknown) payoff vector $\payv(t) = (\payv_{\alpha}(t))_{\alpha\in\act}$ of stage $t$.
In this context, the performance of a dynamic strategy $x(t)$ can be measured by comparing the player's (expected) cumulative payoff to the payoff that he could have obtained if the state of nature were known in advance and the player had best-responded to it;
specifically, the player's \emph{cumulative regret} at time $t$ is defined as
\begin{equation}
\label{eq:regret}
\reg(t)
	= \max_{\alpha\in\act} \int_{0}^{t} \payv_{\alpha}(s) \dd s
	- \int_{0}^{t} \braket{\payv(s)}{x(s)} \dd s,
\end{equation}
and we say that a strategy $x(t)$ is \emph{consistent} if it leads to \emph{no \textup(average\textup) regret}, i.e.
\begin{equation}
\label{eq:consistent}
\limsup_{t\to\infty} \frac{1}{t} \reg(t)
	\leq 0
	\quad
	\text{(a.s.)},
\end{equation}
or, equivalently:
\begin{equation}
\reg(t)
	= o(t)
	\quad
	\text{as $t\to\infty$.}
\end{equation}

The notion of consistency presented above is commonly referred to as \emph{external} or \emph{universal} consistency and was originally introduced by \cite{Han57} in a discrete-time context (for a detailed overview, see \cite{FL98}, \cite{SS11} and references therein).
There, the agent is assumed to receive a payoff vector $\payv_{n}\in\R^{\act}$ at each stage $n\in\N$ and, observing the past realizations $\payv_{1},\dotsc,\payv_{n-1}$, he chooses an action $\alpha_{n}\in\act$ according to some probability distribution $x_{n}\in\simplex(\act)$.
The strategy $x_{n}$ is then said to be \emph{externally consistent} if, on average, it earns more than any action (or expert suggestion) $\alpha\in\act$;
in other words, the no-regret criterion \eqref{eq:consistent} is simply the continuous-time analogue of the standard definition of external consistency.

With this in mind, the main question that we seek to address here is whether the perturbed reinforcement learning process \eqref{eq:SRL} is consistent.
To make this precise, we will focus on the \emph{unilateral} process
\begin{equation}
\label{eq:USRL}
\tag{\ref*{eq:SRL}-U}
\begin{aligned}
dY_{\alpha}(t)
	&= \payv_{\alpha}(t) \dd t
	+ \sigma_{\alpha}(t) \dd W_{\alpha}(t),
	\\
X(t)
	&= \choice(\temp(t) Y(t)),
\end{aligned}
\end{equation}
where $\payv(t)$ is a locally integrable stream of payoff vectors,
$W(t)$ is a Wiener process in $\R^{\act}$
and the noise coefficients $\sigma_{\alpha}$ (assumed continuous and bounded) represent the error in the player's payoff observations.
In the deterministic case ($\sigma=0$),
\cite{Sor09} proved that the unilateral variant of \eqref{eq:EW} is consistent, a result which was recently extended by \cite{KM14} to the more general setting of \eqref{eq:RL}.
Below, we show that this learning process remains consistent
even when the player's payoff observations are subject to arbitrarily high observation noise:

\begin{theorem}
\label{thm:consistent}
Assume that \eqref{eq:USRL} is run with learning parameter $\temp(t)$ satisfying $\lim_{t\to\infty} \temp(t) = 0$ \textup(in addition to Assumption \ref{asm:temp}\textup) and initial bias $Y(0) = 0$.
Then, \eqref{eq:USRL} is consistent and it enjoys the cumulative regret bound:
\begin{equation}
\label{eq:reg-bound}
\reg(t)
	\leq \frac{\depth}{\temp(t)}
	+ \sigma_{\max}^{2} \frac{\abs{\act}}{2K} \int_{0}^{t} \temp(s) \dd s
	+ \bigoh(\sigma_{\max}\sqrt{t \log\log t}),
\end{equation}
where $\depth = \max\{h(x) - h(x') : x, x' \in\strat\}$,
$\sigma_{\max} = \sup_{t} \max_{\beta} \sigma_{\beta}(t)$
and
$K$ is the strong convexity constant of the player's penalty function $h$.
\end{theorem}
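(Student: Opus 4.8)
The plan is to run a continuous-time follow-the-regularised-leader argument organised around the Fenchel conjugate of the penalty function. Set $\fench(y) = \max_{x\in\strat}\{\braket{y}{x} - h(x)\}$; by standard conjugacy facts (Proposition~\ref{prop:Fenchel}) $\fench$ is convex and differentiable with $\grad\fench = \choice$, and $\grad\fench$ is $K^{-1}$-Lipschitz, so that $0 \preceq \hess\fench \preceq K^{-1}I$ wherever the Hessian exists; in particular each diagonal entry obeys $0 \le \partial_{\alpha\alpha}\fench \le K^{-1}$. Adding a constant to $h$ changes neither $\choice$ nor $\depth$, so I normalise $\min_{\strat} h = 0$, whence $\depth = \max_{\strat} h$ and $h \ge 0$ on $\strat$. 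Since $\max_{\alpha}\int_0^t\payv_\alpha\dd s = \max_{p\in\strat}\int_0^t\braket{\payv}{p}\dd s$ is attained at a vertex, it suffices to control the benchmarked regret $\reg_p(t) = \int_0^t\braket{\payv}{p}\dd s - \int_0^t\braket{\payv}{X}\dd s$ for each pure strategy $p = \bvec_\alpha$ and to maximise over the finitely many $\alpha\in\act$ at the end. The pivotal object is the regularised potential
\[
\Phi(t) = \temp(t)^{-1}\fench(\temp(t)Y(t)) = \max_{x\in\strat}\bigl\{\braket{Y(t)}{x} - \temp(t)^{-1}h(x)\bigr\},
\]
which supplies two handles: its variational definition gives $\Phi(t) \ge \braket{Y(t)}{p} - \temp(t)^{-1}h(p)$ for every $p\in\strat$, while $Y(0)=0$ and the normalisation give $\Phi(0) = -\temp(0)^{-1}\min_{\strat}h = 0$.

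Next I would expand $\Phi$ by Itô's formula applied to $\fench$ at $Z(t) = \temp(t)Y(t)$. Writing $\grad\fench(Z) = X$ and $dZ = \dot\temp\,Y\dd t + \temp\,dY$, and using the independence of the $W_\alpha$ (so that the only nonzero quadratic covariations are $d[Z]_{\alpha\alpha} = \temp^2\sigma_\alpha^2\dd t$), the product rule for $\Phi = \temp^{-1}\fench(Z)$ combined with the Legendre identity $\fench(Z) = \braket{Z}{X} - h(X)$ makes the learning-rate contributions telescope: all $\dot\temp$-terms collapse to the single term $(\dot\temp/\temp^{2})\,h(X)\dd t$, which is nonpositive because $\temp$ is nonincreasing and $h\ge0$. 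What survives is
\[
\Phi(t) = \int_0^t\braket{\payv}{X}\dd s + \int_0^t\insum_\alpha X_\alpha\sigma_\alpha\dd W_\alpha + \tfrac12\int_0^t\temp\insum_\alpha\partial_{\alpha\alpha}\fench\,\sigma_\alpha^2\dd s + \int_0^t\frac{\dot\temp}{\temp^2}h(X)\dd s.
\]

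Equating the two descriptions of $\Phi(t)$ and rearranging isolates $\reg_p(t)$ on the left. The leading term $\temp(t)^{-1}h(p) - \Phi(0)$ is at most $\depth/\temp(t)$ by the normalisation; the second-order term is at most $\tfrac{\abs{\act}\sigma_{\max}^2}{2K}\int_0^t\temp\dd s$ via $\partial_{\alpha\alpha}\fench \le K^{-1}$; the collapsed $\dot\temp$-term is discarded as nonpositive; and the two stochastic integrals merge into the single continuous martingale $N(t) = \int_0^t\insum_\alpha\sigma_\alpha(X_\alpha - p_\alpha)\dd W_\alpha$, whose quadratic variation is bounded by $2\sigma_{\max}^2 t$ because $X, p\in\strat$. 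The Dambis--Dubins--Schwarz time change together with the law of the iterated logarithm then give $N(t) = \bigoh(\sigma_{\max}\sqrt{t\log\log t})$ a.s., and since there are only $\abs{\act}$ vertex benchmarks the estimate holds simultaneously for all of them; taking the maximum over $\alpha$ yields \eqref{eq:reg-bound}. Consistency follows at once, for $\temp(t)^{-1} = o(t)$ by Assumption~\ref{asm:temp}, $\int_0^t\temp\dd s = o(t)$ by Cesàro once $\temp(t)\to0$, and the martingale term is $o(t)$.

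The step I expect to be most delicate is the second-order Itô term: since $\fench$ is only $C^{1,1}$, $\hess\fench$ exists merely almost everywhere, so the expansion above must be justified through the Itô--Tanaka--Meyer formula for convex (equivalently $C^{1,1}$) functions, after which the a.e.\ bound $\hess\fench\preceq K^{-1}I$ still controls the correction. The telescoping of the $\dot\temp$-terms and the iterated-logarithm estimate are then routine by comparison.
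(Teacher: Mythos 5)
Your proposal is correct and takes essentially the same route as the paper: your potential $\Phi(t) = \temp(t)^{-1}h^{\ast}(\temp(t)Y(t))$ combined with the Fenchel\textendash Young inequality is an algebraic rearrangement of the paper's rate-adjusted Fenchel coupling $H_{p} = \temp^{-1}\fench(p,\temp Y)$ (nonnegativity of $H_{p}$ is precisely your variational lower bound), and your four estimates \textendash\ the $\depth/\temp(t)$ benchmark term, the nonpositive collapsed $\dot\temp$ term, the law-of-the-iterated-logarithm bound on the noise martingale, and the bound $\pd^{2}h^{\ast}\big/\pd y_{\beta}^{2} \leq K^{-1}$ on the Itô correction \textendash\ match the paper's term by term. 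Your concluding caveat about $h^{\ast}$ being only $C^{1,1}$ in the non-steep case is also resolved exactly as in the paper, via a nonsmooth variant of Itô's lemma.
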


\begin{remark*}
The assumption $Y(0) = 0$ is only made for simplicity:
the policy \eqref{eq:USRL} is consistent for any initial score vector $Y(0)$ but the corresponding regret guarantee is a bit more cumbersome to write down.
\end{remark*}

The basic idea of our proof will be to compare the cumulative payoff of the policy \eqref{eq:USRL} up to time $t$ to that of an arbitrary test strategy $p\in\strat$.
To that end, we will examine how far the induced trajectory of play $X(t) = \choice(\temp(t) Y(t))$ can stray from $p$;
however, since $X(t)$ is defined via the cumulative payoff process $Y(t)$, we will carry out this comparison between $p$ and $Y(t)$ \emph{directly} using the so-called \emph{Fenchel coupling}:
\begin{equation}
\label{eq:Fenchel}
\fench(x,y)
	= h(x) + h^{\ast}(y) - \braket{y}{x},
\end{equation}
where
\begin{equation}
\label{eq:conjugate-k}
h^{\ast}(y)
	= \max_{x\in\strat} \{ \braket{y}{x} - h(x)\}
\end{equation}
denotes the \emph{convex conjugate} of $h$ \citep{Roc70}.%
\footnote{The terminology ``Fenchel coupling'' is due to \cite{MS16} and reflects the fact that \eqref{eq:Fenchel} collects all the terms of Fenchel's inequality.}
By Fenchel's inequality \citep{Roc70}, $\fench(x,y)$ is non-negative in both arguments, so it provides a ``congruity'' measure between $x$ and $y$.
With this in mind, our proof strategy will be to express the player's regret with respect to a test strategy $p\in\strat$ in terms of $\fench(p,Y(t))$ and then show that the latter grows sublinearly in $t$.

The details of the proof are presented in Appendix \ref{app:main} and rely on Itô's lemma and the law of the iterated logarithm (which is used to control the impact of the noise on the agent's learning process).
For now, we focus on some aspects of Theorem \ref{thm:consistent} and the regret bound \eqref{eq:reg-bound}:

\begin{remark}[\emph{The role of $\temp$}]
It is important to note that the second term of \eqref{eq:reg-bound} becomes linear in $t$ when the player's learning parameter $\temp$ is constant, explaining in this way the requirement that $\temp(t)\to0$ as $t\to\infty$.
On the other hand, this requirement can be dropped in the noiseless case:
when $\sigma=0$, the guarantee \eqref{eq:reg-bound} reduces to $\depth/\temp(t)$, so \eqref{eq:USRL} remains consistent even for constant $\temp$ \textendash\ a fact which was first observed by \cite{Sor09} in the context of \eqref{eq:EW} and \cite{KM14} for \eqref{eq:RL}.
In the presence of noise, we conjecture that \eqref{eq:USRL} may lead to positive regret with positive probability for constant $\temp$ but we have not been able to prove this.
\end{remark}

The form of the bound \eqref{eq:reg-bound} also highlights a trade-off between more aggressive learning rates (slowly decaying $\temp$) and the noise affecting the player's payoff observations.
Specifically, the first (deterministic) term of \eqref{eq:reg-bound} is decreasing in $\temp$ while the second one (which is due to all the noise) is increasing in $\temp$;%
\footnote{The last term of \eqref{eq:reg-bound} is also due to the noise, but it does not otherwise depend on $\temp(t)$.}
as a result, in the absence of noise ($\sigma=0$), it is better to use a large, constant $\temp$ rather than letting $\temp(t)\to0$, but this might lead to disastrous results under uncertainty.
\cite{SS11} draws a similar conclusion for the discrete-time analogue of \eqref{eq:RL} known as \ac{OMD};
in this way, discretization and random payoff disturbances seem to have comparably adverse effects on the agent's learning process.


The above considerations can be illustrated more explicitly by choosing $\temp(t) = t^{-\gamma}$ for some $\gamma\in(0,1)$;
in this case, Theorem \ref{thm:consistent} yields the regret bounds:

\begin{corollary}
\label{cor:reg-rate}
Assume that \eqref{eq:USRL} is run with learning rate $\temp(t) \sim t^{-\gamma}$ for some $\gamma\in(0,1)$.
Then:
\begin{equation}
\label{eq:reg-gamma}
\reg(t)
	= \begin{cases}
	\bigoh\left(t^{1-\gamma}\right)
		&\quad
		\text{if $0<\gamma<\frac{1}{2}$},
		\\
	\bigoh\left(\sqrt{t \log \log t}\right)
		&\quad
		\text{if $\gamma=\frac{1}{2}$},
		\\
	\bigoh\left(t^{\gamma}\right)
		&\quad
		\text{if $\frac{1}{2}<\gamma<1$}.
	\end{cases}
\end{equation}
\end{corollary}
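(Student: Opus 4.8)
The plan is to substitute the prescribed learning rate $\temp(t) \sim t^{-\gamma}$ directly into the master regret bound \eqref{eq:reg-bound} of Theorem \ref{thm:consistent} and then identify the dominant term as a function of $\gamma$. With $\temp(t) = t^{-\gamma}$, the three contributions on the right-hand side of \eqref{eq:reg-bound} become, respectively: the deterministic complexity term $\depth/\temp(t) = \bigoh(t^{\gamma})$; the accumulated-noise term $\sigma_{\max}^{2} \frac{\abs{\act}}{2K}\int_{0}^{t} s^{-\gamma}\dd s = \bigoh(t^{1-\gamma})$, where I would note that the integral converges at the origin precisely because $\gamma<1$, so the prefactor $(1-\gamma)^{-1}$ is finite and absorbed into the constant; and the iterated-logarithm term $\bigoh(\sigma_{\max}\sqrt{t\log\log t})$, which carries no dependence on $\gamma$. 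Summing these yields $\reg(t) = \bigoh(t^{\gamma}) + \bigoh(t^{1-\gamma}) + \bigoh(\sqrt{t\log\log t})$, and the remainder of the argument is simply to determine which summand wins.

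First I would compare the two polynomial exponents $\gamma$ and $1-\gamma$: they cross at $\gamma=\frac12$, with $t^{1-\gamma}$ dominating for $\gamma<\frac12$ and $t^{\gamma}$ dominating for $\gamma>\frac12$. The only subtlety is to weigh these polynomial rates against the iterated-logarithm factor $\sqrt{t\log\log t} = t^{1/2}\sqrt{\log\log t}$. For $0<\gamma<\frac12$ we have $1-\gamma>\frac12$, so $t^{1-\gamma}/(t^{1/2}\sqrt{\log\log t}) = t^{1/2-\gamma}/\sqrt{\log\log t}\to\infty$, since a strictly positive power of $t$ overwhelms $\sqrt{\log\log t}$; hence the noise term absorbs the iterated-logarithm term and $\reg(t)=\bigoh(t^{1-\gamma})$. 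Symmetrically, for $\frac12<\gamma<1$ one has $t^{\gamma}/(t^{1/2}\sqrt{\log\log t}) = t^{\gamma-1/2}/\sqrt{\log\log t}\to\infty$, giving $\reg(t)=\bigoh(t^{\gamma})$. In the borderline case $\gamma=\frac12$, both polynomial terms are exactly $\sqrt{t}$ and are dominated by $\sqrt{t\log\log t}$ (as $\sqrt{\log\log t}\to\infty$), so $\reg(t)=\bigoh(\sqrt{t\log\log t})$. This recovers the three cases of \eqref{eq:reg-gamma}.

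The argument is almost entirely bookkeeping, so there is no genuine obstacle; the one point requiring a modicum of care is the borderline comparison, where one must remember that the slowly growing factor $\sqrt{\log\log t}$ breaks the tie in favour of the noise term when $\gamma=\frac12$, yet is negligible against any strictly larger power of $t$ away from that value. Everything else is inherited from the already-established bound \eqref{eq:reg-bound}, so no new probabilistic input (beyond the law of the iterated logarithm already invoked in the proof of Theorem \ref{thm:consistent}) is required.
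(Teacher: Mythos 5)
Your proposal is correct and takes essentially the same route as the paper's own proof: substitute $\temp(t)\sim t^{-\gamma}$ into \eqref{eq:reg-bound} and note that $t^{\max\{\gamma,1-\gamma\}}$ dominates for $\gamma\neq\frac{1}{2}$, while at $\gamma=\frac{1}{2}$ both polynomial terms are $\bigoh(\sqrt{t})$ and the iterated-logarithm term takes over. Your explicit tie-breaking comparison against $\sqrt{t\log\log t}$ merely spells out what the paper's two-line argument leaves implicit.
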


\begin{proof}
Simply note that $t^{\max\{\gamma,1-\gamma\}}$ is the dominant term in \eqref{eq:reg-bound} for all $\gamma\neq 1/2$.
Otherwise, for $\gamma=1/2$, the first two terms of \eqref{eq:reg-bound} are both $\bigoh(\sqrt{t})$ and are dominated by the third.
\end{proof}

\begin{remark}[\emph{Links with vanishingly smooth fictitious play}]
We close this section by discussing the links of \eqref{eq:USRL} with the vanishingly smooth fictitious play process that was recently examined by \cite{BF13} in a discrete-time setting.
Specifically, by interpreting $t^{-1} \int_{0}^{t} \payv(s) \dd s$ as the payoff that a player obtains in a $2$-player game against his opponent's empirical frequency of play (cf. Section \ref{sec:averages}), the strategy
\begin{equation}
x(t)
	= \choice\left(\temp(t) \int_{0}^{t} \payv(s) \dd s\right)
	= \choice\left(t\temp(t) \cdot t^{-1}\int_{0}^{t} \payv(s) \dd s\right)
\end{equation}
can be viewed as a ``vanishingly smooth'' best response to the empirical distribution of play of one's opponent.%
\footnote{It is ``smooth'' because the player is employing a regularized best response map (as opposed to a hard $\argmax$ correspondence), and it is ``vanishingly smooth'' because the factor $t\temp(t)$ hardens to $\infty$ as $t\to\infty$ (for a more detailed discussion, see \citealp{BF13}).}
As such, \eqref{eq:USRL} can be seen as a stochastically perturbed variant of vanishingly smooth fictitious play in continuous time, in which case Corollary \ref{cor:reg-rate} provides the continuous-time, stochastic analogue of Theorem 1.8 of \cite{BF13}.

\end{remark}

\section{Extinction of dominated strategies}
\label{sec:dominated}

A fundamental rationality requirement for any game-theoretic learning process is the elimination of suboptimal, dominated strategies.
Formally, given a finite game $\game\equiv\game(\play,\act,\pay)$, we say that \emph{$p_{k}\in\strat_{k}$ is dominated by $p_{k}'\in\strat_{k}$} (and we write $p_{k} \prec p_{k}'$) if
\begin{equation}
\label{eq:dom}
\smallbraket{\payv_{k}(x)}{p_{k}}
	< \smallbraket{\payv_{k}(x)}{p_{k}'}
	\quad
	\text{for all $x\in\strat$.}
\end{equation}
Strategies that are \emph{iteratively dominated}, \emph{undominated}, or \emph{iteratively undominated} are defined similarly;
also, for \emph{pure} strategies $\alpha,\beta\in\act_{k}$, we obviously have $\alpha\prec\beta$ if and only if
\begin{equation}
\label{eq:dom-pure}
\payv_{k\alpha}(x)
	< \payv_{k\beta}(x)
	\quad
	\text{for all $x\in\strat$.}
\end{equation}
With this in mind, given a trajectory of play $x(t)$, $t\geq0$, we will say that a pure strategy $\alpha\in\act_{k}$ \emph{becomes extinct along $x(t)$} if $x_{k\alpha}(t) \to 0$ as $t\to\infty$.
More generally, following \cite{SZ92}, we will say that the mixed strategy $p_{k}\in\strat_{k}$ becomes extinct along $x(t)$ if $\min\{x_{k\alpha}(t):\alpha\in\supp(p_{k})\}\to0$;
otherwise, $p_{k}$ is said to \emph{survive}.

In the case of perfect payoff observations, \cite{MS16} showed that dominated strategies become extinct under the general reinforcement learning dynamics \eqref{eq:RL}, thus extending earlier results for the replicator dynamics (for an overview, see \citealp{Vio15}).
Under noise and uncertainty however, the situation is a bit more complex:
in the replicator dynamics with aggregate shocks \eqref{eq:ASRD}, \cite{Cab00}, \cite{Imh05} and \cite{HI09} provided a set of sufficient conditions on the intensity of the noise that guarantee the elimination of dominated strategies;
on the other hand, \cite{MM10} showed that the noisy replicator dynamics \eqref{eq:SRD} eliminate all strategies that are not iteratively undominated, with no conditions on the noise level.

As we show below, this unconditional elimination result is a consequence of the core reinforcement principle behind \eqref{eq:RL} and it extends to the entire class of learning dynamics covered by \eqref{eq:SRL}:

\begin{theorem}
\label{thm:dom}
Let $X(t)$ be a solution orbit of \eqref{eq:SRL}.
If $p_{k}\in\strat_{k}$ is dominated \textup(even iteratively\textup),
then it becomes extinct along $X(t)$ almost surely.
\end{theorem}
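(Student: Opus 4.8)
The plan is to run the entire argument in the \emph{dual} (score) space, where the process \eqref{eq:SRL} is linear and free of Itô corrections, and then to transfer the conclusion back to the simplex through the Fenchel coupling. First I would reduce the iteratively dominated case to the directly dominated one by induction on the rounds of elimination: once the strategies removed in earlier rounds have been shown to become extinct, their weight along $X(t)$ tends to $0$, so $X(t)$ approaches the reduced strategy space; by continuity and boundedness of $\payv_k$ on the compact set $\strat$, the reduced-game domination inequality \eqref{eq:dom} for $p_k$ then persists (up to a vanishing error) along $X(t)$. This leaves only the directly dominated case $p_k \prec p_k'$, where compactness of $\strat$ supplies a uniform gap $m \equiv \min_{x\in\strat}\smallbraket{\payv_k(x)}{p_k'-p_k} > 0$.

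Next I would track the scalar score difference $Z(t) \equiv \smallbraket{Y_k(t)}{p_k'-p_k}$. Because $Y_k$ obeys the simple linear \acs{SDE} of \eqref{eq:SRL} --- the Itô correction lives in the dynamics of $X$, not of $Y$ --- we have
\[ Z(t) = \int_0^t \smallbraket{\payv_k(X(s))}{p_k'-p_k}\dd s + \int_0^t \insum_{\alpha}^{k}(p_{k\alpha}'-p_{k\alpha})\,\sigma_{k\alpha}(X(s))\dd W_{k\alpha}(s). \]
The drift is bounded below by $mt$. The second term is a continuous martingale $M(t)$ whose quadratic variation satisfies $\langle M\rangle_t \leq Ct$, since each $\sigma_{k\alpha}$ is Lipschitz, hence bounded, on $\strat$; the law of the iterated logarithm for continuous martingales then gives $M(t) = \bigoh(\sqrt{t\log\log t}) = o(t)$ almost surely. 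Consequently $\liminf_{t\to\infty} Z(t)/t \geq m > 0$, and in particular $\temp_k(t) Z(t) \geq m\,t\temp_k(t)\,(1-o(1)) \to +\infty$ by Assumption \ref{asm:temp}.

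To pass to the primal variables I would invoke the player-$k$ Fenchel coupling $\fench(p_k,z)=h_k(p_k)+h_k^{\ast}(z)-\smallbraket{z}{p_k}$ and the elementary identity
\[ \fench(p_k,z) - \fench(p_k',z) = h_k(p_k) - h_k(p_k') + \smallbraket{z}{p_k'-p_k}, \]
evaluated at $z = \temp_k(t)Y_k(t)$. Since the coupling is nonnegative in both arguments, $\fench(p_k',\temp_k Y_k)\geq 0$, and therefore
\[ \fench(p_k,\temp_k(t)Y_k(t)) \geq h_k(p_k) - h_k(p_k') + \temp_k(t)Z(t) \longrightarrow +\infty \quad\text{a.s.}, \]
the first two terms being constant. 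Thus the Fenchel coupling between the fixed test point $p_k$ and the running score diverges.

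The remaining --- and, I expect, the most delicate --- step is to deduce extinction from $\fench(p_k,\temp_k Y_k)\to\infty$. Writing $X_k = \choice_k(\temp_k Y_k)$ and using the first-order optimality in \eqref{eq:choice}, the coupling unfolds, for the separable kernel of \eqref{eq:decomposable}, into a sum of one-dimensional Bregman divergences
\[ \fench(p_k,\temp_k Y_k) = \insum_{\alpha}^{k}\big[\theta_k(p_{k\alpha}) - \theta_k(X_{k\alpha}) - \theta_k'(X_{k\alpha})(p_{k\alpha}-X_{k\alpha})\big]. \]
Each summand is nonnegative, and the point is that each stays \emph{bounded} as long as $X_{k\alpha}$ is bounded away from $0$ for every $\alpha\in\supp(p_k)$ (the coordinates with $p_{k\alpha}=0$ contribute terms that remain bounded as $X_{k\alpha}\to 0^{+}$ thanks to the regularity of $\theta_k$ near the origin). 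Hence, if $p_k$ were to \emph{survive}, there would be a sequence $t_n\to\infty$ and $\delta>0$ with $X_{k\alpha}(t_n)\geq\delta$ for all $\alpha\in\supp(p_k)$, along which the coupling would remain bounded --- contradicting its divergence. The care required here is twofold: the boundary/subgradient bookkeeping in the non-steep case, where $X_k$ may sit on a proper face and $\temp_k Y_k$ only lies in the corresponding normal cone; and the uniform control of the off-support terms. Both are handled by the hypotheses that each $\theta_k$ is strongly convex and lies in $C^{0}[0,1]\cap C^{3}(0,1]$. Granting this equivalence between divergence of the Fenchel coupling and extinction completes the argument.
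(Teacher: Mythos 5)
Your proposal follows the paper's own route almost step for step: the uniform gap $m>0$ from compactness, the linear evolution of the score difference $\smallbraket{Y_{k}}{p_{k}'-p_{k}}$ in the dual space, control of the martingale part via a quadratic-variation bound, time change, and the law of the iterated logarithm (this is precisely the paper's Lemma \ref{lem:Wbound}), the cross-coupling identity $\fench_{k}(p_{k},z)-\fench_{k}(p_{k}',z)=h_{k}(p_{k})-h_{k}(p_{k}')+\smallbraket{z}{p_{k}'-p_{k}}$ combined with $\fench_{k}(p_{k}',\cdot)\geq 0$, divergence via $t\temp_{k}(t)\to\infty$ (Assumption \ref{asm:temp}), and induction over elimination rounds for the iterated case (the paper delegates this to \citealp{Cab00}; your persistence-of-the-gap argument, conditioning on the extinction of earlier-round strategies so that the payoff gap is eventually at least $m/2$, is the standard way to fill that in).

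The one place you genuinely deviate is the final step, and there your argument is narrower than the theorem. You deduce extinction from $\fench_{k}(p_{k},\temp_{k}Y_{k})\to\infty$ by unfolding the coupling into one-dimensional Bregman divergences, which presupposes the separable form \eqref{eq:decomposable}; the theorem, however, is stated for \emph{arbitrary} penalty functions (continuous, strongly convex, smooth on relative interiors of faces), and the paper closes this step with Proposition \ref{prop:Fenchel}(4), proved in that generality via the one-sided directional derivative identity $\braket{y}{p-x}=h'(x;p-x)$ for $x=\choice(y)$ lying in a face not excluding $p$, together with a compactness argument. Within the separable class your sketch does go through, and the two worries you flag are manageable: the off-support Bregman terms tend to $0$ because convexity gives $\theta_{k}(x)-\theta_{k}(0)\leq\theta_{k}'(x)\,x\leq\theta_{k}(2x)-\theta_{k}(x)$, so $\theta_{k}'(x)\,x\to 0$ by continuity of $\theta_{k}$ at the origin; and in the non-steep case the \ac{KKT} decomposition $\temp_{k}Y_{k\alpha}=\theta_{k}'(X_{k\alpha})+\zeta_{k}-\mu_{k\alpha}$ adds to the Bregman sum only the slackness terms $\insum_{\alpha}\mu_{k\alpha}p_{k\alpha}$, which vanish along any survival sequence since $\mu_{k\alpha}>0$ forces $X_{k\alpha}=0$ while $p_{k\alpha}>0$ forces $X_{k\alpha}\geq\delta$ there. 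So to prove the theorem as stated you should either replace the Bregman computation by a general argument of the Proposition \ref{prop:Fenchel}(4) type, or explicitly restrict your claim to separable penalties.
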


The  basic idea of our proof is to show that the Itô process $X(t)$ has a dominant drift term that pushes it away from dominated strategies.
However, given the complicated form of the (non-autonomous) dynamics \eqref{eq:evolution}, we do so by using the Fenchel coupling \eqref{eq:Fenchel} to relate the evolution of the generating process $Y_{k}(t)$ to $p_{k}$.

\begin{proof}[Proof of Theorem \ref{thm:dom}]
Suppose $p_{k} \prec p_{k}'$ so that $\smallbraket{\payv_{k}(x)}{p_{k}' - p_{k}} \geq m_{k}$ for some $m_{k}>0$ and for all $x\in\strat$.
Then, if $Y(t)$ is a solution orbit of \eqref{eq:SRL}, we get:
\begin{flalign}
\label{eq:Yest0}
d\braket{Y_{k}}{p_{k}' - p_{k}}
	= \braket{dY_{k}}{p_{k}' - p_{k}}
	&= \braket{\payv_{k}}{p'_{k} - p_{k}} dt
	+ \insum_{\beta}^{k} (p_{k\beta}' - p_{k\beta})\,\sigma_{k\beta} \dd W_{k\beta}
	\notag\\
	&\geq m_{k} \dd t + \insum_{\beta}^{k} (p_{k\beta}' - p_{k\beta})\,\sigma_{k\beta} \dd W_{k\beta},
\end{flalign}
or, equivalently:
\begin{equation}
\label{eq:Yest1}
\braket{Y_{k}(t)}{p_{k}' - p_{k}}
	\geq c_{k} + m_{k} t + \xi_{k}(t),
\end{equation}
where $c_{k} = \braket{Y_{k}(0)}{p_{k} - p_{k}'}$ and
\begin{equation}
\label{eq:diffnoise}
\xi_{k}(t)
	= \insum_{\beta}^{k} (p_{k\beta} - p_{k\beta}') \int_{0}^{t} \sigma_{k\beta}(X(s)) \dd W_{k\beta}(s).
\end{equation}

Consider now the rate-adjusted ``cross-coupling''
\begin{flalign}
\label{eq:cross}
V_{k}(y_{k})
	&= \temp_{k}^{-1} \left[ \fench_{k}(p_{k},\temp_{k} y_{k}) - \fench_{k}(p_{k}',\temp_{k}y_{k}) \right]
	\notag\\
	&= \temp_{k}^{-1} \left[h_{k}(p_{k}) - h_{k}(p_{k}') \right] - \braket{y_{k}}{p_{k} - p_{k}'},
\end{flalign}
with $\fench_{k}$ defined as in \eqref{eq:Fenchel}.
Then, by substituting \eqref{eq:Yest1} in \eqref{eq:cross} and recalling that $\fench_{k}(p_{k}',y_{k}) \geq 0$, we obtain:
\begin{equation}
\label{eq:Yest2}
\fench_{k}(p_{k}, \temp_{k} Y_{k})
	\geq h_{k}(p_{k}) - h_{k}(p_{k}')
	+ \temp_{k}\cdot \left[c_{k} + m_{k} t + \xi_{k}(t) \right].
\end{equation}
To proceed, Lemma \ref{lem:Wbound} shows that $m_{k}t + \xi_{k}(t) \sim m_{k}t$ (a.s.), so the RHS of \eqref{eq:Yest2} tends to infinity as $t\to\infty$ on account of the fact that $t\temp_{k}(t) \to +\infty$ (cf. Assumption \ref{asm:temp}).
In turn, this gives $\fench_{k}(p_{k},\temp_{k}(t)Y_{k}(t))\to+\infty$, so $p_{k}$ becomes extinct along $X(t) = \choice(\temp(t) Y(t))$ by virtue of Proposition \ref{prop:Fenchel}.
Finally, our claim for iteratively dominated strategies follows by induction on the rounds of elimination of dominated strategies \textendash\ cf. \citet[Proposition 1A]{Cab00}.
\end{proof}

Theorem \ref{thm:dom} shows that dominated strategies become extinct under \eqref{eq:SRL} but it does not give any information on the rate of extinction \textendash\ or how probable it is to observe a dominated strategy above a given level at some $t\geq0$.
To address this issue, we provide two results below:
Proposition~\ref{prop:dom-asymptotics} describes the long-term decay rate of dominated strategies and provides a ``large deviations'' bound for the probability of observing a dominated strategy above a given level at time $t\geq0$.
Subsequently, in Proposition~\ref{prop:dom-mean} we estimate the expected time it takes for a dominated strategy to drop below a given level.
For simplicity, we present our results in the case where the players' choice maps are derived from separable penalty functions as in \eqref{eq:decomposable};
again, proofs are relegated to Appendix \ref{app:main}:

\begin{proposition}
\label{prop:dom-asymptotics}
Let $\alpha\in\act_{k}$ be dominated by $\beta\in\act_{k}$ and assume that the choice map of player $k$ is generated by a separable penalty function of the form \eqref{eq:decomposable} with $\lim_{x\to0^{+}} \theta_{k}'(x) = -\infty$.
Then, for all $\delta,\eps>0$ and for all large enough $t$, we have:
\begin{flalign}
\label{eq:dom-rate}
&X_{k\alpha}(t)
	\leq \phi_{k}\left[ C_{k} - \temp_{k}(t) \left( m_{k}t  - 2(1+\eps) \sigma_{\alpha\beta} \sqrt{t \log \log t} \right) \right]
	\quad
	\text{\textup(a.s.\textup)}
\intertext{and}
\label{eq:dom-bound}
&\prob\left(X_{k\alpha}(t) > \delta\right)
	\leq \frac{1}{2}
	\erfc\left[
	\frac{1}{2\sigma_{\alpha\beta}}
	\left(
	m_{k}\sqrt{t} - \frac{C_{k} - \theta_{k}'(\delta)}{\temp_{k}(t) \sqrt{t}}
	\right)
	\right]
\end{flalign}
where:
\begin{enumerate}
[\indent\itshape a\upshape)]
\setlength{\itemsep}{0pt}
\item
$\erfc(z) = \frac{2}{\sqrt{\pi}} \int_{z}^{\infty} e^{-t^{2}} dt$ is the complementary error function.
\item
$\phi_{k} = (\theta_{k}')^{-1}$ \textup(note that $\lim_{z\to-\infty} \phi_{k}(z) = 0$ by assumption\textup).
\item
$m_{k} = \min_{x\in\strat}\{\payv_{k\beta}(x) - \payv_{k\alpha}(x)\} >0$ is the minimum payoff difference between $\alpha$ and $\beta$.
\item
$\sigma_{\alpha\beta}^{2} = \frac{1}{2}\max_{x\in\strat}\big\{\sigma_{k\alpha}^{2}(x) + \sigma_{k\beta}^{2}(x)\big\} > 0$.
\item
$C_{k}$ is a constant that depends only on the initial conditions of \eqref{eq:SRL}.
\end{enumerate}
\end{proposition}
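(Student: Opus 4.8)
The plan is to make the extinction argument of Theorem~\ref{thm:dom} quantitative by combining the linear growth of the score gap with a law-of-iterated-logarithm control of the driving martingale (for the a.s. rate) and a Gaussian tail estimate for its terminal value (for the probability bound). Write $Z(t) = Y_{k\beta}(t) - Y_{k\alpha}(t)$ for the score gap between the dominating strategy $\beta$ and the dominated strategy $\alpha$. Since the drift of $Z$ is bounded below by $m_k$, integrating \eqref{eq:SRL} gives
\begin{equation}
Z(t) \geq c_k + m_k t + M(t), \qquad M(t) = \int_0^t \sigma_{k\beta}(X) \dd W_{k\beta} - \int_0^t \sigma_{k\alpha}(X) \dd W_{k\alpha},
\end{equation}
where $c_k$ depends only on $Y(0)$ and $M$ is a continuous martingale with $\langle M\rangle_t = \int_0^t (\sigma_{k\alpha}^2+\sigma_{k\beta}^2)(X) \dd s \leq 2\sigma_{\alpha\beta}^2 t$. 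Because $h_k$ is separable as in \eqref{eq:decomposable} with $\lim_{z\to0^+}\theta_k'(z) = -\infty$, the maximizer defining $\choice_k$ in \eqref{eq:choice} is interior, so its first-order conditions read $\theta_k'(X_{k\alpha}) = \temp_k Y_{k\alpha} - \lambda_k$ for a common multiplier $\lambda_k$. Eliminating $\lambda_k$ via the dominating coordinate and bounding $X_{k\beta}\leq 1$ (whence $\theta_k'(X_{k\beta})\leq\theta_k'(1)$) yields $\theta_k'(X_{k\alpha}) \leq \theta_k'(1) - \temp_k Z(t)$; absorbing $\theta_k'(1)$ and the $\temp_k$-bounded term $-\temp_k c_k$ into a single constant $C_k$ and applying the increasing inverse $\phi_k=(\theta_k')^{-1}$ gives the master inequality
\begin{equation}
X_{k\alpha}(t) \leq \phi_k\!\left[ C_k - \temp_k(t)\big(m_k t + M(t)\big) \right].
\end{equation}
Both assertions then reduce to controlling the single random term $M(t)$.

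For the almost-sure rate \eqref{eq:dom-rate}, I would invoke the martingale law of the iterated logarithm (this is the content of Lemma~\ref{lem:Wbound} already used above): since $\langle M\rangle_t \leq 2\sigma_{\alpha\beta}^2 t$, for every $\eps>0$ one has $-M(t) \leq 2(1+\eps)\sigma_{\alpha\beta}\sqrt{t\log\log t}$ for all large $t$ (a.s.), the degenerate case $\langle M\rangle_\infty<\infty$ being easier since $M$ then stays bounded. Substituting into the master inequality and using monotonicity of $\phi_k$ gives \eqref{eq:dom-rate}.

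For the tail bound \eqref{eq:dom-bound}, observe that $\{X_{k\alpha}(t)>\delta\} = \{\theta_k'(X_{k\alpha}(t))>\theta_k'(\delta)\}$, and the master inequality embeds this event into $\{-M(t)>c_t\}$ with $c_t = m_k t - (C_k-\theta_k'(\delta))/\temp_k(t)$. It then suffices to bound the terminal tail of $-M$ by that of a centered Gaussian with the maximal variance $2\sigma_{\alpha\beta}^2 t$: via the Dambis--Dubins--Schwarz time change $M(t)=B(\langle M\rangle_t)$ one has $c_t/\sqrt{2\cdot 2\sigma_{\alpha\beta}^2 t} = \tfrac{1}{2\sigma_{\alpha\beta}}\big(m_k\sqrt t - (C_k-\theta_k'(\delta))/(\temp_k\sqrt t)\big)$, which is exactly the argument of the complementary error function in \eqref{eq:dom-bound}.

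The delicate point is precisely this last step. Unlike the running maximum of $M$ (for which the reflection principle gives a clean tail at the cost of a factor of two), the \emph{terminal} value of a time-changed Brownian motion whose random clock is only bounded above by $2\sigma_{\alpha\beta}^2 t$ need not be stochastically dominated by the Gaussian of that maximal variance. Recovering exactly the stated $\tfrac12\erfc$ constant therefore requires either the genuinely Gaussian comparison available when the $\sigma_{k\alpha}$ are constant (combined with monotonicity of $\erfc$ in the variance) or a sub-Gaussian estimate adapted to the fixed terminal time; matching this with state-dependent diffusion coefficients is the one step needing care, whereas the upstream first-order-condition elimination and the LIL are routine.
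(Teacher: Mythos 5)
Your proposal follows the paper's proof essentially step for step: the same \ac{KKT} elimination yielding the master inequality $\theta_k'(X_{k\alpha}(t)) \leq C_k - \temp_k(t)\left[m_k t + M(t)\right]$, the same time-changed law of the iterated logarithm (with the degenerate bounded-variation case handled separately) for \eqref{eq:dom-rate}, and the same reduction of \eqref{eq:dom-bound} to a tail estimate for the martingale term. Your closing caveat is apt rather than a flaw in your argument: the paper's own proof writes $\prob\bigl(\widetilde W(\rho(t)) < -c_t\bigr) = \tfrac{1}{2}\erfc\bigl(c_t\big/\sqrt{2\rho(t)}\bigr)$ with the random clock $\rho(t)$ treated as if deterministic, which is exact only when the diffusion coefficients are constant (monotonicity of $\erfc$ in the variance then gives \eqref{eq:dom-bound} via $\rho(t)\leq 2\sigma_{\alpha\beta}^2 t$); for genuinely state-dependent $\sigma$, the safe argument is the stopping-time/reflection estimate on $\inf_{s\leq 2\sigma_{\alpha\beta}^2 t}\widetilde W(s)$, which costs a factor of $2$ relative to the stated constant.
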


\begin{proposition}
\label{prop:dom-mean}
With notation as in Proposition \ref{prop:dom-asymptotics}, assume that \eqref{eq:SRL} is run with constant learning rates $\temp_{k}$ and noisy observations with constant variance.
If $\tau_{\delta} = \inf\{t>0: X_{k\alpha}(t) \leq \delta\}$, then:
\begin{equation}
\label{eq:dom-mean}
\ex[\tau_{\delta}]
	\leq \frac{\big[C_{k} - \theta_{k}'(\delta)\big]_{+}}{\temp_{k} m_{k}}.
\end{equation}
\end{proposition}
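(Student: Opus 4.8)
The plan is to reduce everything to a first-exit estimate for a one-dimensional process with strictly positive drift, built from the same score-based machinery used in the proof of Theorem \ref{thm:dom}. As in Proposition \ref{prop:dom-asymptotics}, it suffices to treat the case of a pure strategy $\alpha\in\act_{k}$ dominated by a pure strategy $\beta\in\act_{k}$, the mixed and iterated cases following by the same reduction. Since $\lim_{x\to0^{+}}\theta_{k}'(x)=-\infty$, the orbit $X(t)$ stays in the relative interior of $\strat_{k}$ for all $t$ (Proposition \ref{prop:evolution}), so the first-order (KKT) conditions for the separable choice map read $\theta_{k}'(X_{k\gamma}(t)) = \temp_{k} Y_{k\gamma}(t) - \mu_{k}(t)$ for every $\gamma\in\act_{k}$, where $\mu_{k}(t)$ is the Lagrange multiplier of the simplex constraint. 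Subtracting the relations for $\alpha$ and $\beta$ eliminates $\mu_{k}$, and combining the result with $X_{k\beta}\le1$ (so that $\theta_{k}'(X_{k\beta})\le\theta_{k}'(1)$) and the pathwise score lower bound \eqref{eq:Yest1} yields the pathwise estimate $\theta_{k}'(X_{k\alpha}(t)) \le C_{k} - \temp_{k} m_{k} t - \temp_{k}\xi_{k}(t)$, where $\xi_{k}$ is the diffusive martingale of \eqref{eq:diffnoise} and $C_{k}$ collects the initial-condition constants.

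Next I would convert the event $\{X_{k\alpha}(t)\le\delta\}$ into a level-crossing event. Since $\theta_{k}'$ is strictly increasing, $X_{k\alpha}(t)\le\delta$ is equivalent to $\theta_{k}'(X_{k\alpha}(t))\le\theta_{k}'(\delta)$, and by the estimate above this is implied by $Z(t):=\temp_{k} m_{k} t + \temp_{k}\xi_{k}(t)\ge a$, where $a:=C_{k}-\theta_{k}'(\delta)$. Consequently $\tau_{\delta}\le\sigma_{a}$ pathwise, with $\sigma_{a}:=\inf\{t\ge0 : Z(t)\ge a\}$, so it is enough to bound $\ex[\sigma_{a}]$. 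If $a\le0$ then $Z(0)=0\ge a$ gives $\sigma_{a}=0$ and there is nothing to prove, which accounts for the positive part in \eqref{eq:dom-mean}; so assume $a>0$.

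The final step is a standard optional-stopping computation for the continuous martingale $M(t):=\temp_{k}\xi_{k}(t)=Z(t)-\temp_{k} m_{k} t$, which under constant variance is simply a time-scaled Brownian motion. Stopping at the bounded time $t\wedge\sigma_{a}$ gives $\ex[M(t\wedge\sigma_{a})]=0$, hence $\temp_{k} m_{k}\,\ex[t\wedge\sigma_{a}]=\ex[Z(t\wedge\sigma_{a})]$; path continuity of $Z$ rules out overshoot, so $Z(t\wedge\sigma_{a})\le a$ almost surely, whence $\ex[t\wedge\sigma_{a}]\le a/(\temp_{k} m_{k})$, and letting $t\to\infty$ with monotone convergence yields $\ex[\sigma_{a}]\le a/(\temp_{k} m_{k})$, i.e.\ \eqref{eq:dom-mean}. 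The step I expect to require the most care — and the main obstacle — is the justification of this optional-stopping identity together with the no-overshoot claim: the cleanest route is to stop at the bounded time $t\wedge\sigma_{a}$ (so the stopped martingale has zero mean with no uniform-integrability hypothesis) and to use continuity of the Itô process $Z$ to guarantee $Z(t\wedge\sigma_{a})\le a$ pathwise on every event, including $\{\sigma_{a}=\infty\}$, so that the resulting finite bound on $\ex[\sigma_{a}]$ also retroactively forces $\sigma_{a}<\infty$ almost surely.
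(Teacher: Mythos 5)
Your proposal is correct and follows essentially the same route as the paper: the same KKT/score bound $\theta_{k}'(X_{k\alpha}(t)) \leq C_{k} - \temp_{k}\left[m_{k}t + \xi_{k}(t)\right]$ reduces $\tau_{\delta}$ to the first passage of a Brownian motion with constant drift $m_{k}$ past the level $a = C_{k} - \theta_{k}'(\delta)$, which the paper then dispatches with ``a standard argument based on Dynkin's formula.'' Your bounded-time optional-stopping computation with monotone convergence is precisely a careful spelling-out of that standard argument (yielding the inequality, which is all the proposition needs, and a.s.\ finiteness of the hitting time as a by-product), so there is no substantive difference.
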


Propositions \ref{prop:dom-asymptotics} and \ref{prop:dom-mean} are our main results concerning the rate of elimination of dominated strategies under uncertainty, so a few remarks are in order:

\begin{remark}[\emph{Asymptotics versus mean behavior}]
Propositions \ref{prop:dom-asymptotics} and \ref{prop:dom-mean} capture complementary aspects of the statistics of the extinction rate of dominated strategies under \eqref{eq:SRL}.
For instance, \eqref{eq:dom-rate} describes the asymptotic rate of elimination of dominated strategies but it does not provide an estimate of how much time must pass until this rate becomes relevant.
On the other hand, the bound \eqref{eq:dom-mean} estimates the mean time it takes for a dominated strategy to fall below a given level;
however, in contrast to \eqref{eq:dom-bound}, it does not describe how probable it is to observe deviations from this mean.
\end{remark}

\begin{remark}[\emph{Comparison with the noiseless case}]
When $\sigma=0$, the decay estimate \eqref{eq:dom-rate} boils down to $\phi_{k}[C_{k} - m_{k}\temp_{k}(t) t]$, a bound which recovers the results of \cite{MS16} for the deterministic dynamics \eqref{eq:RL}.
Thus, even though the noise may initially mask the fact that a strategy is dominated, Proposition \ref{prop:dom-asymptotics} shows that the long-run behavior of \eqref{eq:SRL} and \eqref{eq:RL} is the same as far as dominated strategies are concerned.
This is in contrast with the aggregate-shocks dynamics \eqref{eq:ASRD} where, even if a dominated strategy becomes extinct, its rate of elimination is different to leading order than in the noiseless case \citep[Theorem 3.1]{Imh05}.
\end{remark}

\begin{remark}[\emph{The role of $\temp$}]
The estimate \eqref{eq:dom-rate} shows that running \eqref{eq:SRL} with large, constant $\temp$ leads to a much faster rate of extinction of dominated strategies.
In particular, recalling that $\erfc(t) \sim \pi^{-1/2} t^{-1} e^{-t^{2}}$ as $t\to\infty$, the bound \eqref{eq:dom-bound} becomes:
\begin{equation}
\label{eq:dom-decay}
\prob(X_{k\alpha}(t) > \delta)
	= \bigoh\left( t^{-1/2} \exp\left(-\frac{m_{k}^{2} t}{2\sigma_{\alpha\beta}^{2}}\right) \right),
\end{equation}
up to a subleading term of the order of $\bigoh(1/\temp_{k}^{2}(t))$ in the exponent.
Thus, even though the leading behavior of \eqref{eq:dom-bound} is not affected by the player's choice of learning parameter, the subleading term is minimized for large, constant $\temp$ so the asymptotic bound \eqref{eq:dom-decay} becomes tighter in that case.

We thus observe an important contrast between regret minimization and the elimination of dominated strategies:
whereas the optimal regret guarantee of Theorem \ref{thm:consistent} is achieved for $\temp(t) \propto 1/\sqrt{t}$, the asymptotic extinction rate of dominated strategies is much faster for constant $\temp$.
The reason for this disparity is that higher values of $\temp$ reinforce consistent payoff differences and therefore eliminate dominated strategies faster (independently of the noise level).
On the other hand, to attain a no-regret state, players must be careful not to make too many mistakes in the presence of noise, so a more conservative choice of $\temp$ is warranted.
\end{remark}


\section{Long-term stability and convergence analysis}
\label{sec:folk}

We now turn to the long-term stability and convergence properties of the dynamics \eqref{eq:SRL} with respect to equilibrium play.
To that end, recall first that $\eq\in\strat$ is a \emph{Nash equilibrium} of $\game\equiv\game(\play,\act,\pay)$ if it is unilaterally stable for all players, i.e.
\begin{equation}
\label{eq:Nash}
\pay_{k}(\eq)
	\geq \pay_{k}(x_{k},\eq_{-k})
	\quad
	\text{for all $x_{k}\in\strat_{k}$, $k\in\play$,}
\end{equation}
or, equivalently:
\begin{equation}
\label{eq:Nash-coords}
\payv_{k\alpha}(\eq) \geq \payv_{k\beta}(\eq)
	\quad
	\text{for all $k\in\play$ and for all $\alpha\in\supp(\eq_{k})$, $\beta\in\act_{k}$.}
\end{equation}
\emph{Strict} equilibria are defined by requiring that \eqref{eq:Nash} hold as a strict inequality for all $x_{k}\neq \eq_{k}$;
obviously, such equilibria are also \emph{pure} in the sense that they correspond to pure strategy profiles in $\act=\prod_{k}\act_{k}$ (i.e. vertices of $\strat$).

In the noiseless case ($\sigma=0$) with constant learning rates ($\dot\temp =0$), \cite{MS16} recently showed that the deterministic dynamics \eqref{eq:RL} exhibit the following properties with respect to Nash equilibria of $\game$:
\begin{enumerate}
[\indent\itshape a\upshape)]
\setlength{\itemsep}{0pt}

\item
If a solution orbit of \eqref{eq:RL} converges to $\eq$, then $\eq$ is a Nash equilibrium.

\item
If $\eq\in\strat$ is (Lyapunov) stable, then it is also a Nash equilibrium.

\item
Strict Nash equilibria are asymptotically stable in \eqref{eq:RL}.
\end{enumerate}

In turn, these properties are generalizations of the long-term stability and convergence properties of the (multi-population) replicator dynamics \textendash\ sometimes referred to as the ``folk theorem'' of evolutionary game theory \citep{HS98,HS03}.
That being said, the situation is quite different in the presence of noise:
for instance, interior Nash equilibria are not even traps (almost sure rest points) of the stochastic reinforcement learning dynamics \eqref{eq:SRL}, so the ordinary (deterministic) definitions of stability and convergence no longer apply.
Instead, in the context of stochastic differential equations, Lyapunov and asymptotic stability are defined as follows \citep{Kha12}:

\begin{definition}
\label{def:stability}
Let $\eq\in\strat$.
We will say that:
\begin{enumerate}
\item
$\eq$ is \emph{stochastically \textup(Lyapunov\textup) stable} under \eqref{eq:SRL} if, for every $\eps>0$ and for every neighborhood $U_{0}$ of $\eq$ in $\strat$, there exists a neighborhood $U\subseteq U_{0}$ of $\eq$ such that
\begin{equation}
\label{eq:stable-Lyap}
\prob(\text{$X(t)\in U_{0}$ for all $t\geq0$})
	\geq 1-\eps,
\end{equation}
whenever $X(0) \in U$.


\item
$\eq$  is \emph{stochastically asymptotically stable} under \eqref{eq:SRL} if it is stochastically stable and attracting:
for every $\eps>0$ and for every neighborhood $U_{0}$ of $\eq$ in $\strat$, there exists a neighborhood $U\subseteq U_{0}$ of $\eq$ such that
\begin{equation}
\label{eq:stable-asym}
\txs
\prob\left(
	\text{$X(t)\in U_{0}$ for all $t\geq0$ and $\lim_{t\to\infty} X(t) = \eq$}
	\right)
	\geq 1-\eps,
\end{equation}
whenever $X(0) \in U$.
\end{enumerate}
\end{definition}

In the evolutionary setting of the stochastic replicator dynamics with aggregate shocks, \cite{Imh05} and \cite{HI09} showed that strict Nash equilibria are stochastically asymptotically stable under \eqref{eq:ASRD} provided that the variability of the shocks across different strategies is small enough.
More recently, in a learning context, \cite{MM10} showed that the same holds for the stochastic replicator dynamics \eqref{eq:SRD} of exponential learning (with constant $\temp$), irrespective of the variance of the observation noise.
However, this last result relies heavily on the specific properties of the logit map \eqref{eq:logit} and the infinitesimal generator of \eqref{eq:SRD}.

In our case, the convoluted (and non-autonomous) form of the stochastic dynamics \eqref{eq:evolution} complicates things considerably, so such an approach is not possible \textendash\ especially with regards to finding a local stochastic Lyapunov function for the dynamical system \eqref{eq:evolution} that governs the evolution of $X(t)$.
Nonetheless, by working \emph{directly} with \eqref{eq:SRL}, we obtain the following general result:

\begin{theorem}
\label{thm:folk}
Let $X(t)$ be a solution orbit of \eqref{eq:SRL} and let $\eq\in\strat$.
Then:
\begin{enumerate}
[\indent\upshape(1)]
\setlength{\itemsep}{0pt}
\item
If\;$\prob\left(\lim_{t\to\infty} X(t) = \eq\right) > 0$, $\eq$ is a Nash equilibrium of $\game$.
\item
If $\eq$ is stochastically \textup(Lyapunov\textup) stable, it is also Nash.
\item
If $\eq$ is a strict Nash equilibrium of $\game$, it is stochastically asymptotically stable under \eqref{eq:SRL}.
\end{enumerate}
\end{theorem}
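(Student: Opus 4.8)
The plan is to treat all three parts through the \emph{score-level} analysis already used for Theorem~\ref{thm:dom}, comparing the generating process $Y(t)$ to suitable vertices via the Fenchel coupling \eqref{eq:Fenchel} rather than wrestling with the Itô dynamics \eqref{eq:evolution} of $X(t)$ directly.

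For part (1) I would argue by contraposition. If $\eq$ is not a Nash equilibrium, \eqref{eq:Nash-coords} fails, so there is a player $k$, a strategy $\alpha\in\supp(\eq_{k})$ and a $\beta\in\act_{k}$ with $\payv_{k\beta}(\eq)>\payv_{k\alpha}(\eq)$; by continuity of the payoffs there is a neighborhood $U$ of $\eq$ and a margin $m>0$ such that $\payv_{k\beta}(x)-\payv_{k\alpha}(x)\geq m$ for all $x\in U$. On the event $\{\lim_{t} X(t)=\eq\}$ the orbit eventually enters and stays in $U$, so exactly as in \eqref{eq:Yest1} the score gap obeys $\smallbraket{Y_{k}(t)}{\bvec_{\beta}-\bvec_{\alpha}}\geq c+m(t-t_{0})+\xi(t)$, where the diffusive term $\xi(t)$ is $o(t)$ (a.s.) by Lemma~\ref{lem:Wbound}. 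Since $t\temp_{k}(t)\to\infty$ (Assumption~\ref{asm:temp}), \eqref{eq:cross} then forces $\fench_{k}(\bvec_{\alpha},\temp_{k}Y_{k})\to\infty$, so $X_{k\alpha}(t)\to0$ by Proposition~\ref{prop:Fenchel}; this contradicts $X_{k\alpha}(t)\to\eq_{k\alpha}>0$ on a positive-probability event. Part (2) is identical, except that the ``trapping'' event is now supplied by stochastic stability: choosing $U_{0}\subseteq U$ small enough that $x_{k\alpha}>\eq_{k\alpha}/2$ throughout $U_{0}$, the same computation shows $\prob(X(t)\in U_{0}\text{ for all }t)=0$ whenever $\eq$ fails to be Nash, contradicting \eqref{eq:stable-Lyap}.

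For part (3) the key observation is that a strict equilibrium $\eq$ is a vertex, so $\eq_{k}=\bvec_{\alpha_{k}^{\ast}}$, and strictness together with continuity yields a neighborhood $U^{\ast}$ and a margin $m>0$ with $\payv_{k\alpha_{k}^{\ast}}(x)-\payv_{k\beta}(x)\geq m$ for every $k$, every $\beta\neq\alpha_{k}^{\ast}$ and every $x\in U^{\ast}$. I would introduce the score gaps $G_{k\beta}(t)=Y_{k\alpha_{k}^{\ast}}(t)-Y_{k\beta}(t)$ and the exit time $\tau=\inf\{t:X(t)\notin U_{0}\}$ for a small $U_{0}\subseteq U^{\ast}$. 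The crucial point is that, \emph{at the level of $Y$}, each $G_{k\beta}$ is a drifted Brownian motion with no Itô correction: for $t<\tau$ its drift is $\geq m$ and its martingale part has quadratic variation rate bounded by $2\sigma_{\max}^{2}$. For attraction, on $\{\tau=\infty\}$ we get $G_{k\beta}(t)\geq G_{k\beta}(0)+mt+o(t)$, so the effective gaps $\temp_{k}(t)G_{k\beta}(t)$ all tend to $+\infty$ (again using $t\temp_{k}\to\infty$), whence $X(t)\to\eq$ by Proposition~\ref{prop:Fenchel}. For stability I would bound the escape probability: since $X\notin U_{0}$ forces some effective gap $\temp_{k}(t)G_{k\beta}(t)$ to fall below a threshold $g_{0}$, a Dubins--Schwarz time-change reduces each $G_{k\beta}$ to a Brownian motion with positive drift, whose probability of ever dropping below $g_{0}/\temp_{k}$ decays like $\exp(-m(G_{k\beta}(0)-g_{0}/\temp_{k})/\sigma_{\max}^{2})$. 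Taking $X(0)$ close enough to $\eq$ makes $G_{k\beta}(0)$ large, so a union bound over the finitely many pairs $(k,\beta)$ yields $\prob(\tau<\infty)\leq\eps$; this is precisely \eqref{eq:stable-asym}.

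I expect the main obstacle to be the stability estimate rather than attraction. Working directly with the Itô dynamics \eqref{eq:evolution} is hopeless because the Fenchel coupling $\fench_{k}(\eq_{k},\temp_{k}Y_{k})$ picks up a positive Itô correction of order $\temp_{k}^{2}\sigma^{2}$ near the vertex and is therefore \emph{not} a local supermartingale for large noise or large learning rates --- which is exactly why the claim must be established at the score level, where the diffusion enters only through a bounded-rate martingale and the exponential crossing bound is noise-agnostic. Two technical points require care: first, the translation ``$X\notin U_{0}$ iff some effective gap is below $g_{0}$'' must be made uniform via the choice-map estimates of Proposition~\ref{prop:Fenchel}; and second, for a genuinely decreasing $\temp_{k}$ the threshold $g_{0}/\temp_{k}(t)$ grows with $t$, so the crossing argument must be split into an initial window (where $G_{k\beta}(0)$ dominates) and a tail (where $m\,t\temp_{k}(t)\to\infty$ outpaces the threshold), the interface between the two being the only place where Assumption~\ref{asm:temp} enters in an essential way.
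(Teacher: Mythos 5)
Your proposal is correct and follows essentially the same route as the paper: parts (1)--(2) are exactly the score-difference argument that the paper isolates as Proposition~\ref{prop:stability} (linear growth of $Y_{k\beta}-Y_{k\alpha}$ on the trapping event, Lemma~\ref{lem:Wbound} to absorb the martingale part, $t\temp_{k}(t)\to\infty$ and Proposition~\ref{prop:Fenchel} to force $X_{k\alpha}\to0$), while part (3) reproduces the paper's proof of stochastic asymptotic stability \textendash\ unscaled score gaps with uniform drift $m_{k}$ near the strict equilibrium, a time change reducing the noise to a standard Brownian motion, the exponential crossing bound $e^{-\lambda_{k}M}$ with $\lambda_{k}=m_{k}\big/\bigl(\temp_{k}(0)\sigma_{k,\max}^{2}\bigr)$, a union bound over the finitely many pairs $(k,\beta)$, and attraction on $\{\tau_{U}=\infty\}$ via Lemma~\ref{lem:Wbound}. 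Your closing caveat about the threshold $g_{0}/\temp_{k}(t)$ growing when $\temp_{k}$ genuinely decreases is well taken: the paper compresses this point into the monotonicity step around \eqref{eq:Z1} (the $-2M$ versus $-M$ buffer), and making it fully airtight requires precisely the initial-window/tail splitting you describe, with Assumption~\ref{asm:temp} guaranteeing that $\sup_{t\geq0}\bigl(M/\temp_{k}(t)-\tfrac{1}{2}m_{k}t\bigr)<\infty$, so that a single shifted line dominates the moving boundary and the drifted-Brownian-motion crossing estimate applies uniformly.
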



The fist ingredient of our proof (presented in detail in Appendix \ref{app:main}) is to show that if the process $X(t)$ remains in the vicinity of $\eq\in\strat$ for all $t\geq0$ with positive probability, then $\eq$ must be a Nash equilibrium.
This is formalized in the following proposition (which is of independent interest):

\begin{proposition}
\label{prop:stability}
With notation as in Theorem \ref{thm:folk}, assume that every neighborhood $U$ of $\eq$ in $\strat$ admits with positive probability a solution orbit $X(t)$ of \eqref{eq:SRL} such that $X(t) \in U$ for all $t\geq0$.
Then, $\eq$ is a Nash equilibrium.
\end{proposition}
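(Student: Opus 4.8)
The plan is to argue by contradiction, adapting the Fenchel-coupling argument of Theorem \ref{thm:dom} to a \emph{local} notion of domination. Suppose $\eq$ is \emph{not} a Nash equilibrium. Then, by the characterization \eqref{eq:Nash-coords}, there exist a player $k\in\play$, a strategy $\alpha\in\supp(\eq_{k})$ and a comparator $\beta\in\act_{k}$ with $\payv_{k\beta}(\eq) > \payv_{k\alpha}(\eq)$. Since the payoff functions $\payv_{k\gamma}$ are continuous and $\eq_{k\alpha}>0$, I can fix a neighborhood $U_{0}$ of $\eq$ in $\strat$ and a constant $m_{k}>0$ such that, \emph{simultaneously}, $\payv_{k\beta}(x) - \payv_{k\alpha}(x) \geq m_{k}$ and $x_{k\alpha} \geq \tfrac{1}{2}\eq_{k\alpha} > 0$ for all $x\in U_{0}$.

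Next I pass to the event on which the stability hypothesis bites. By assumption there is a neighborhood $U\subseteq U_{0}$ and an event $A$ with $\prob(A)>0$ on which some orbit satisfies $X(t)\in U\subseteq U_{0}$ for all $t\geq0$. On $A$ the payoff gap is uniformly bounded below by $m_{k}$, so integrating the score increment $dY_{k\beta}-dY_{k\alpha}$ exactly as in \eqref{eq:Yest0}\textendash\eqref{eq:Yest1} (with the roles of $p_{k}=\bvec_{\alpha}$, $p_{k}'=\bvec_{\beta}$) yields
\[
\braket{Y_{k}(t)}{\bvec_{\beta} - \bvec_{\alpha}} \geq c_{k} + m_{k}t + \xi_{k}(t),
\]
where $\xi_{k}(t)$ is the martingale noise term of \eqref{eq:diffnoise}. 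Lemma \ref{lem:Wbound} guarantees $m_{k}t + \xi_{k}(t)\sim m_{k}t$ almost surely on the whole probability space, hence a.s. on $A$.

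I then feed this estimate into the rate-adjusted cross-coupling \eqref{eq:cross} with $p_{k}=\bvec_{\alpha}$ and $p_{k}'=\bvec_{\beta}$. Arguing as in \eqref{eq:Yest2}, and using $\fench_{k}(\bvec_{\beta},\temp_{k}Y_{k})\geq0$, I obtain
\[
\fench_{k}(\bvec_{\alpha},\temp_{k}Y_{k}) \geq h_{k}(\bvec_{\alpha}) - h_{k}(\bvec_{\beta}) + \temp_{k}\bigl[c_{k} + m_{k}t + \xi_{k}(t)\bigr].
\]
Because $t\temp_{k}(t)\to+\infty$ (Assumption \ref{asm:temp}), the right-hand side tends to $+\infty$ a.s. on $A$, so $\fench_{k}(\bvec_{\alpha},\temp_{k}(t)Y_{k}(t))\to+\infty$ there. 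By Proposition \ref{prop:Fenchel} this forces the pure strategy $\bvec_{\alpha}$ to become extinct, i.e. $X_{k\alpha}(t)\to0$, on $A$. But $X(t)\in U_{0}$ on $A$ forces $X_{k\alpha}(t)\geq\tfrac12\eq_{k\alpha}>0$ for all $t$, a contradiction. Hence $\prob(A)=0$, contradicting $\prob(A)>0$; therefore $\eq$ must be a Nash equilibrium.

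The main obstacle\textemdash and the only place where the argument genuinely departs from Theorem \ref{thm:dom}\textemdash is the \emph{localization}: the comparator $\beta$ outperforms $\alpha$ only in a neighborhood of $\eq$, not on all of $\strat$, so the uniform drift lower bound $m_{k}$ is available only while the orbit remains in $U_{0}$. This is precisely what the positive-probability stability hypothesis supplies. The one subtlety requiring care is that the martingale estimate of Lemma \ref{lem:Wbound} holds almost surely on the ambient probability space, and therefore continues to hold after intersection with the confinement event $A$; consequently no delicate conditioning of the noise on $\{X(t)\in U_{0}\ \forall t\}$ is needed, and the contradiction goes through on $A$ as claimed.
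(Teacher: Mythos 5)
Your proof is correct and follows essentially the same route as the paper's: contradiction via a locally uniform payoff gap $m_{k}$ on the confinement event, the martingale estimate of Lemma \ref{lem:Wbound} to show $m_{k}t + \xi_{k}(t) \sim m_{k}t$, and Proposition \ref{prop:Fenchel} to force $X_{k\alpha}(t)\to0$, contradicting $\alpha\in\supp(\eq_{k})$. The only cosmetic difference is that you route the conclusion through the cross-coupling \eqref{eq:cross} (as in Theorem \ref{thm:dom}), whereas the paper applies Proposition \ref{prop:Fenchel} directly to the divergent score difference $\temp_{k}(t)\bigl[Y_{k\alpha}(t)-Y_{k\beta}(t)\bigr]\to-\infty$; both are equivalent, and your handling of the conditioning on the positive-probability event matches the paper's.
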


\begin{proof}
If $\eq$ is not Nash, we must have $\payv_{k\alpha}(\eq) < \payv_{k\beta}(\eq)$ for some player $k\in\play$ and for some $\alpha\in\supp(\eq_{k})$, $\beta\in\act_{k}$.
On that account, let $U$ be a sufficiently small neighborhood of $\eq$ in $\strat$ such that $\payv_{k\beta}(x) - \payv_{k\alpha}(x) \geq m_{k}$ for some $m_{k} > 0$ and for all $x\in U$.
Then, conditioning on the positive probability event that there exists an orbit $X(t) = \choice(\temp(t) Y(t))$ of \eqref{eq:SRL} that is contained in $U$ for all $t\geq0$, we have:
\begin{flalign}
\label{eq:Ydiff1}
dY_{k\alpha} - dY_{k\beta}
	&= \left(\payv_{k\alpha}(X) - \payv_{k\beta}(X)\right) \dd t
	+ \sigma_{k\alpha} \dd W_{k\alpha} - \sigma_{k\beta} \dd W_{k\beta}
	\notag\\
	&\leq -m_{k} \dd t - d\xi_{k},
\end{flalign}
where
$\xi_{k}(t) = \int_{0}^{t} \sigma_{k\beta}(X(s)) \dd W_{k\beta}(s) - \int_{0}^{t} \sigma_{k\alpha}(X(s)) \dd W_{k\alpha}(s)$.
Since the diffusion coefficients $\sigma$ are bounded, the process $\xi_{k}(t)$ will grow at most as $\bigoh(\sqrt{t\log\log t})$, so we get $\xi_{k}(t) + m_{k}t \sim m_{k}t$ (for a precise statement, see Lemma \ref{lem:Wbound}).
Thus, on account of Assumption \ref{asm:temp} for $\temp_{k}(t)$, we obtain:
\begin{equation}
\temp_{k}(t) \cdot \big[ Y_{k\alpha}(t) - Y_{k\beta}(t) \big]
	\leq -\frac{m_{k} t + \xi_{k}(t)}{t} \cdot \temp_{k}(t) t
	\to -\infty,
\end{equation}
The above shows that the score difference between $\alpha$ and $\beta$ grows to $-\infty$, so $X_{k\alpha}(t)$ tends to zero as $t\to\infty$ (Proposition \ref{prop:Fenchel}).
This contradicts the assumption that $X(t) \in U$ for all $t$ (recall that $\alpha\in\supp(\eq_{k})$), so $\eq$ must be a Nash equilibrium of $\game$.
\end{proof}

With Proposition \ref{prop:stability} at hand, we are left to show that strict equilibria are stochastically asymptotically stable under \eqref{eq:SRL}.
This is carried out in Appendix \ref{app:main} by showing that
\begin{inparaenum}
[\itshape a\upshape)]
\item
trajectories that remain close to a strict equilibrium are eventually attracted to it (because it is locally dominant);
and
\item
that this occurs with controllably high probability (a result which is harder to establish and which relies on an application of Girsanov's theorem to estimate the probability that a Brownian motion with positive drift attains a given negative level in finite time).
\end{inparaenum}

We close this section with a few remarks on Theorem \ref{thm:folk}:


\begin{remark}[\emph{Convergence rates and the role of $\temp$}]
By reasoning as in Proposition \ref{prop:dom-asymptotics}, it is also possible to show that the rate of convergence of \eqref{eq:SRL} to strict equilibria increases with the players' learning parameter $\temp$.
As in the case of dominated strategies, the reason for this is that players experience a consistent payoff difference in favor of their equilibrium strategy when close to a strict equilibrium, and this difference (which is reflected in the score process $Y(t)$) is heightened by $\temp$.
In this regard, convergence to a strict equilibrium seems to be somewhat antagonistic to attaining a no-regret state in the presence of noise:
the regret guarantee \eqref{eq:reg-bound} is optimized for $\temp(t) \propto t^{-1/2}$ while the dynamics' rate of convergence to strict equilibria is faster for large, constant $\temp$.
\end{remark}

\begin{remark}
\label{rem:modified}
By considering the modified game $\tilde\game$ with noise-adjusted payoff functions $\tilde\payv_{k\alpha}(x) = \payv_{k\alpha}(x) - \frac{1}{2} \sigma_{k\alpha}^{2}$, the reasoning in the proof of Theorem~\ref{thm:folk} yields an alternative proof of the stability and convergence results of \cite{HI09} for the replicator dynamics with aggregate shocks \eqref{eq:ASRD}.
\end{remark}


\section{Time averages in $2$-player games}
\label{sec:averages}

In this section, we analyze the asymptotic behavior of the time averages
\begin{equation}
\label{eq:average}
\bar X(t)
	= \frac{1}{t} \int_{0}^{t} X(s) \dd s,
\end{equation}
and we establish an averaging principle for \eqref{eq:SRL} in $2$-player games.
Our analysis is motivated by the original deterministic results of \citet{HS98} and \cite{HSV09} who showed that $\bar X(t)$ converges to Nash equilibrium under the replicator dynamics
whenever $\liminf_{t\to\infty} X_{k\alpha}(t) > 0$ for all $\alpha\in\act_{k}$, $k=1,2$.
More recently, \cite{MS16} proved a version of this result for arbitrary regularized best response maps (always in a deterministic setting), while \cite{HI09} showed that time-averages of the aggregate-shocks replicator dynamics \eqref{eq:ASRD} converge to the Nash set of a modified game.

Our main result here is that the averaging principle of \cite{HS98} extends to the perturbed reinforcement learning scheme \eqref{eq:SRL}, even in the presence of arbitrarily large payoff observation errors:

\begin{theorem}
\label{thm:averages}
Let $\game$ be a $2$-player game and let $X(t)$ be a solution orbit of the stochastic dynamics \eqref{eq:SRL}.
If the players' score differences $Y_{k\alpha}(t) - Y_{k\beta}(t)$ grow sublinearly with $t$ for all $\alpha,\beta\in\act_{k}$, $k=1,2$, then the time-averaged process $\bar X(t)$ converges almost surely to the set of Nash equilibria of $\game$.
\end{theorem}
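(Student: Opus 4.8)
The plan is to exploit the bilinearity of payoffs in $2$-player games to collapse the time average of payoffs into the payoff at the time-averaged strategy, and then to dispose of the stochastic part via the law of the iterated logarithm. The crucial observation is that, in a $2$-player game, each payoff $\payv_{k\alpha}(x) = \payv_{k\alpha}(x_{-k})$ is a \emph{linear} function of the opponent's strategy $x_{-k}$. Hence, writing $\bar X_{-k}(t) = \frac{1}{t}\int_{0}^{t} X_{-k}(s)\dd s$ for the opponent's empirical distribution of play, linearity gives
\begin{equation*}
\frac{1}{t}\int_{0}^{t} \payv_{k\alpha}(X(s))\dd s
	= \payv_{k\alpha}\!\left(\frac{1}{t}\int_{0}^{t} X_{-k}(s)\dd s\right)
	= \payv_{k\alpha}(\bar X_{-k}(t)).
\end{equation*}
Recalling the definition of the score process in \eqref{eq:SRL}, I would then decompose
\begin{equation*}
\frac{1}{t} Y_{k\alpha}(t)
	= \payv_{k\alpha}(\bar X_{-k}(t)) + \frac{1}{t} M_{k\alpha}(t),
	\qquad
	M_{k\alpha}(t) = \int_{0}^{t} \sigma_{k\alpha}(X(s))\dd W_{k\alpha}(s).
\end{equation*}

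First I would control the martingale remainder. Since the diffusion coefficients $\sigma_{k\alpha}$ are bounded, $M_{k\alpha}$ is a continuous martingale with quadratic variation $\langle M_{k\alpha}\rangle_{t} = \bigoh(t)$, so Lemma \ref{lem:Wbound} (equivalently, the law of the iterated logarithm) yields $M_{k\alpha}(t) = \bigoh(\sqrt{t\log\log t})$ and hence $\frac{1}{t} M_{k\alpha}(t) \to 0$ almost surely. On the other hand, the theorem's hypothesis is precisely that $Y_{k\alpha}(t) - Y_{k\beta}(t) = o(t)$, i.e. $\frac{1}{t}\big(Y_{k\alpha}(t) - Y_{k\beta}(t)\big)\to 0$ (a.s.). Subtracting the decomposition above for two indices $\alpha,\beta\in\act_{k}$ and letting the martingale terms vanish, I obtain the key limit
\begin{equation*}
\payv_{k\alpha}(\bar X_{-k}(t)) - \payv_{k\beta}(\bar X_{-k}(t)) \longrightarrow 0
	\qquad\text{(a.s.), for all } \alpha,\beta\in\act_{k},\ k=1,2.
\end{equation*}

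Finally I would pass to limit points. Since $\bar X(t)$ takes values in the compact set $\strat$, it admits limit points almost surely; let $\bar X^{*}=(\bar X^{*}_{1},\bar X^{*}_{2})$ be one, attained along a sequence $t_{n}\to\infty$. Continuity of the payoff functions together with the displayed limit forces $\payv_{k\alpha}(\bar X^{*}_{-k}) = \payv_{k\beta}(\bar X^{*}_{-k})$ for all $\alpha,\beta\in\act_{k}$; in other words, against $\bar X^{*}_{-k}$ every action of player $k$ earns the same payoff. This equalization immediately yields the Nash conditions \eqref{eq:Nash-coords} (for $\alpha\in\supp(\bar X^{*}_{k})$ and any $\beta\in\act_{k}$ the inequality holds with equality), so every limit point $\bar X^{*}$ is a Nash equilibrium. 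Since the Nash set is closed, a standard compactness argument---if $\dist(\bar X(t),\mathrm{NE})\not\to 0$ one extracts a convergent subsequence whose limit is non-Nash, a contradiction---upgrades this to $\dist(\bar X(t),\mathrm{NE}(\game))\to 0$ almost surely.

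The bulk of the argument is deterministic and rests entirely on the bilinearity identity, which is special to the $2$-player case; the only genuinely stochastic ingredient is the almost-sure control of the martingale $M_{k\alpha}$, and this is exactly the regime handled by Lemma \ref{lem:Wbound}. I therefore expect no serious obstacle beyond the bookkeeping of assembling the finitely many almost-sure events (over players and index pairs) into a single full-measure event on which all the above limits hold simultaneously. The one point deserving care is conceptual: the hypothesis rules out convergence to a strict pure equilibrium (where a score difference would grow linearly), so the equalizing conclusion is consistent with---and indeed tailored to---the mixed/interior regime in which the averaging principle is meant to operate.
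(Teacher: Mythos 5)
Your proposal is correct and follows essentially the same route as the paper's own proof: the same exploitation of the linearity of $\payv_{k\alpha}$ in the opponent's strategy to write $\frac{1}{t}\int_{0}^{t}\payv_{k\alpha}(X(s))\dd s = \payv_{k\alpha}(\bar X(t))$, the same control of the martingale part via Lemma \ref{lem:Wbound}, and the same compactness argument on $\omega$-limits of $\bar X(t)$. The only (harmless) omission is the initial-condition term $\bigl(Y_{k\alpha}(0)-Y_{k\beta}(0)\bigr)/t$ in your decomposition, which vanishes as $t\to\infty$ and so does not affect the argument.
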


\begin{proof}[Proof of Theorem \ref{thm:averages}]
Pick $\alpha,\beta\in\act_{k}$.
Then, by the definition of \eqref{eq:SRL}, we get:
\begin{flalign}
\label{eq:Yavg1}
Y_{k\alpha}(t) - Y_{k\beta}(t)
	&= Y_{k\alpha}(0) - Y_{k\beta}(0)
	+ \int_{0}^{t} \left[ \payv_{k\alpha}(X(s)) - \payv_{k\beta}(X(s)) \right] \dd s
	\notag\\
	&+ \int_{0}^{t} \sigma_{k\alpha}(X(s)) \dd W_{k\alpha}(s)
	- \int_{0}^{t} \sigma_{k\beta}(X(s)) \dd W_{k\beta}(s)
	\notag\\
	&= c_{k} + \xi_{k}(t)
	+ t \left[ \payv_{k\alpha}(\bar X(t)) - \payv_{k\beta}(\bar X(t)) \right],
\end{flalign}
where $c_{k} = Y_{k\alpha}(0) - Y_{k\beta}(0)$, $\xi_{k}(t)$ denotes the martingale part of \eqref{eq:Yavg1} and we have used the fact that $\payv_{k}(x)$ is linear in $x$ (and not only multilinear).
Since $\xi_{k}(t)$ is subleading in \eqref{eq:Yavg1} with respect to $t$ (cf. Lemma \ref{lem:Wbound} for a precise statement), dividing by $t$ yields:
\begin{equation}
\label{eq:Yavg2}
\lim_{t\to\infty} \left[ \payv_{k\alpha}(\bar X(t)) - \payv_{k\beta}(\bar X(t)) \right]
	= 0
	\quad
	\text{(a.s.),}
\end{equation}
so $\payv_{k\alpha}(\eq) = \payv_{k\beta}(\eq)$ whenever $\eq$ is an $\omega$-limit of $\bar X(t)$.
This shows that any $\omega$-limit of $\bar X(t)$ is a Nash equilibrium, and since the $\omega$-set of $\bar X(t)$ is nonempty (by compactness of $\strat$), our claim follows. 
\end{proof}

\begin{figure}[t]
\subfigure[A sample trajectory and its time average.]{
\label{fig:averages-sample}
\includegraphics[width=.485\textwidth]{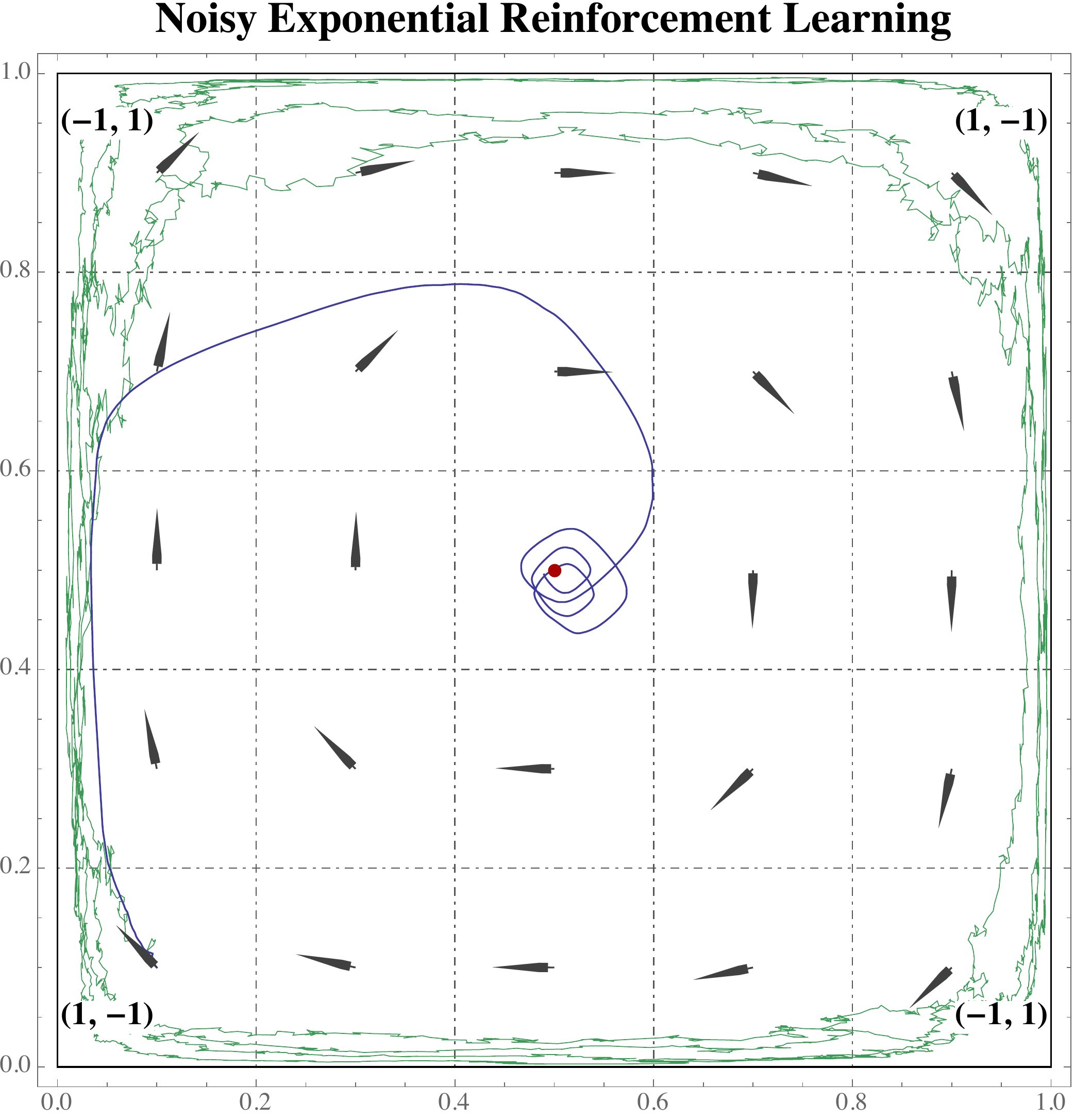}}
\hfill
\subfigure[Distribution of time averages at time $T$.]{
\label{fig:averages-density}
\shortstack{
\includegraphics[width=.23\textwidth]{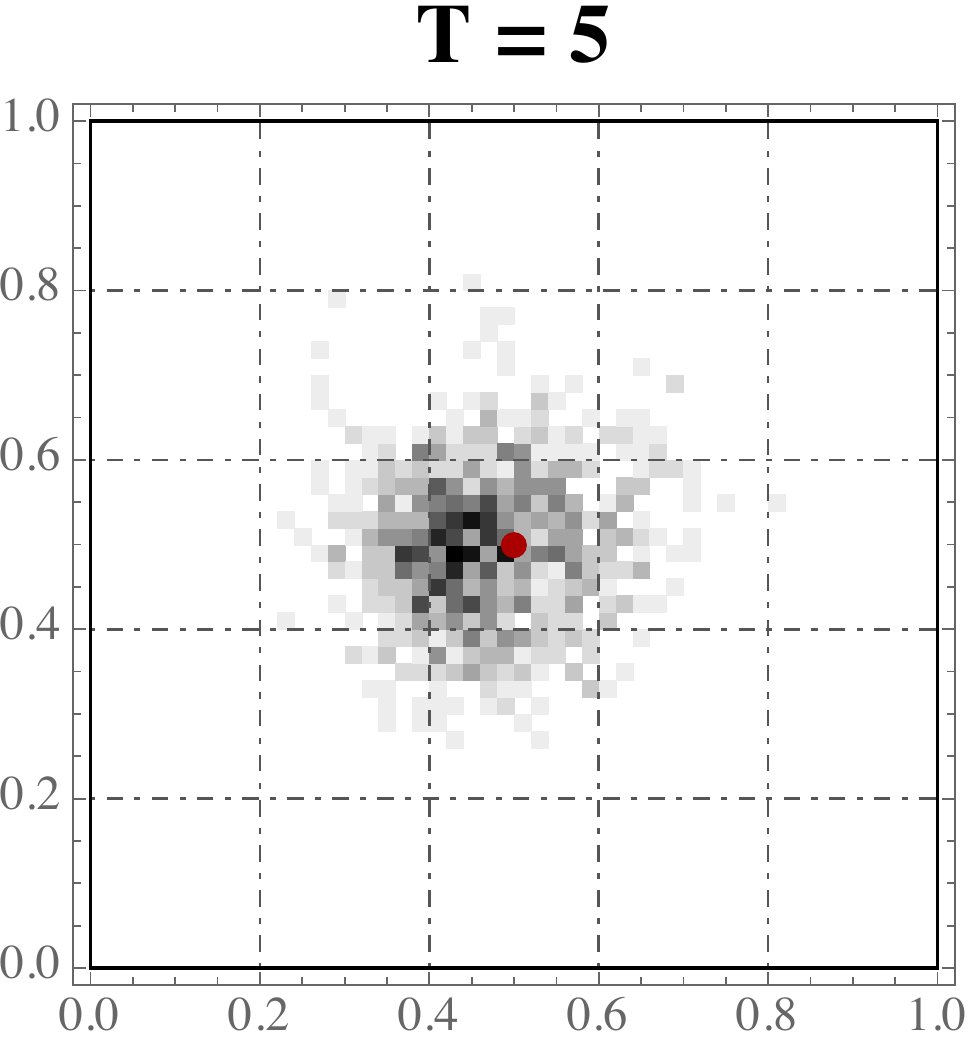}
\includegraphics[width=.23\textwidth]{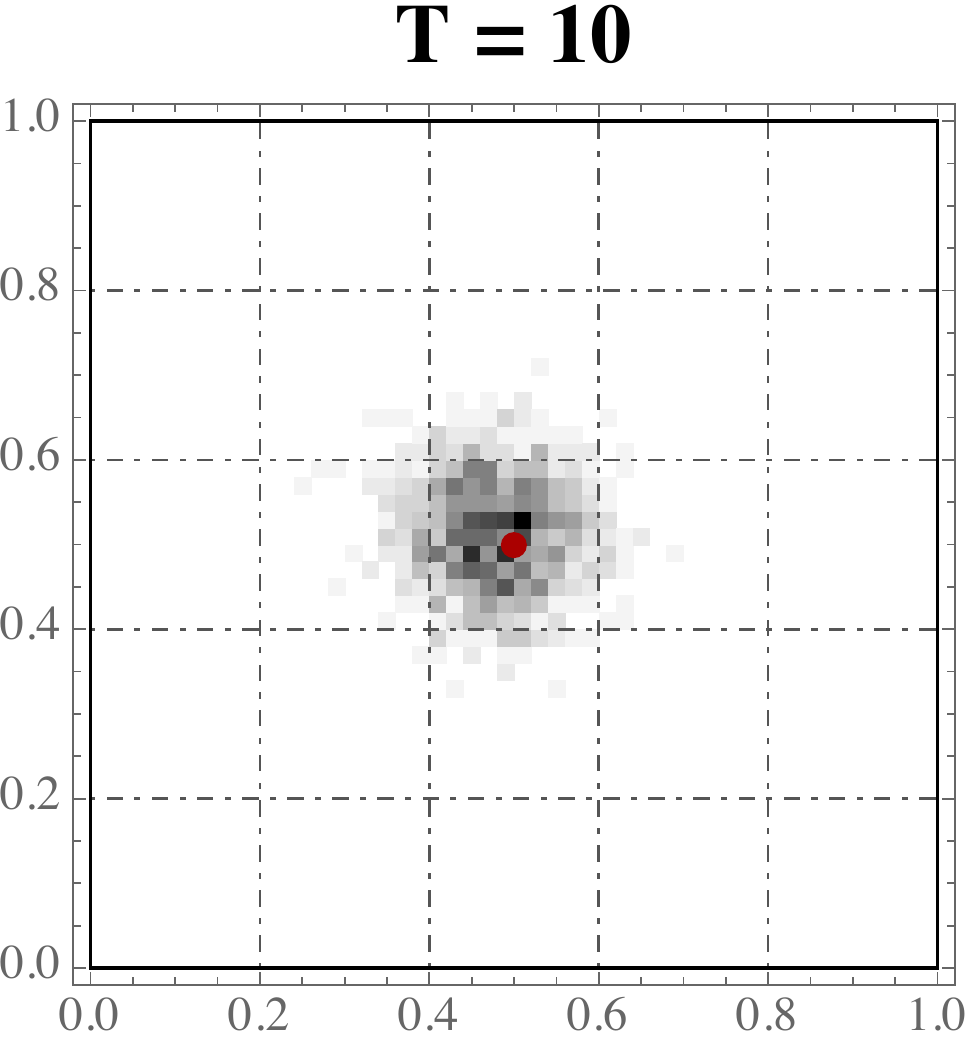}
\\
\includegraphics[width=.23\textwidth]{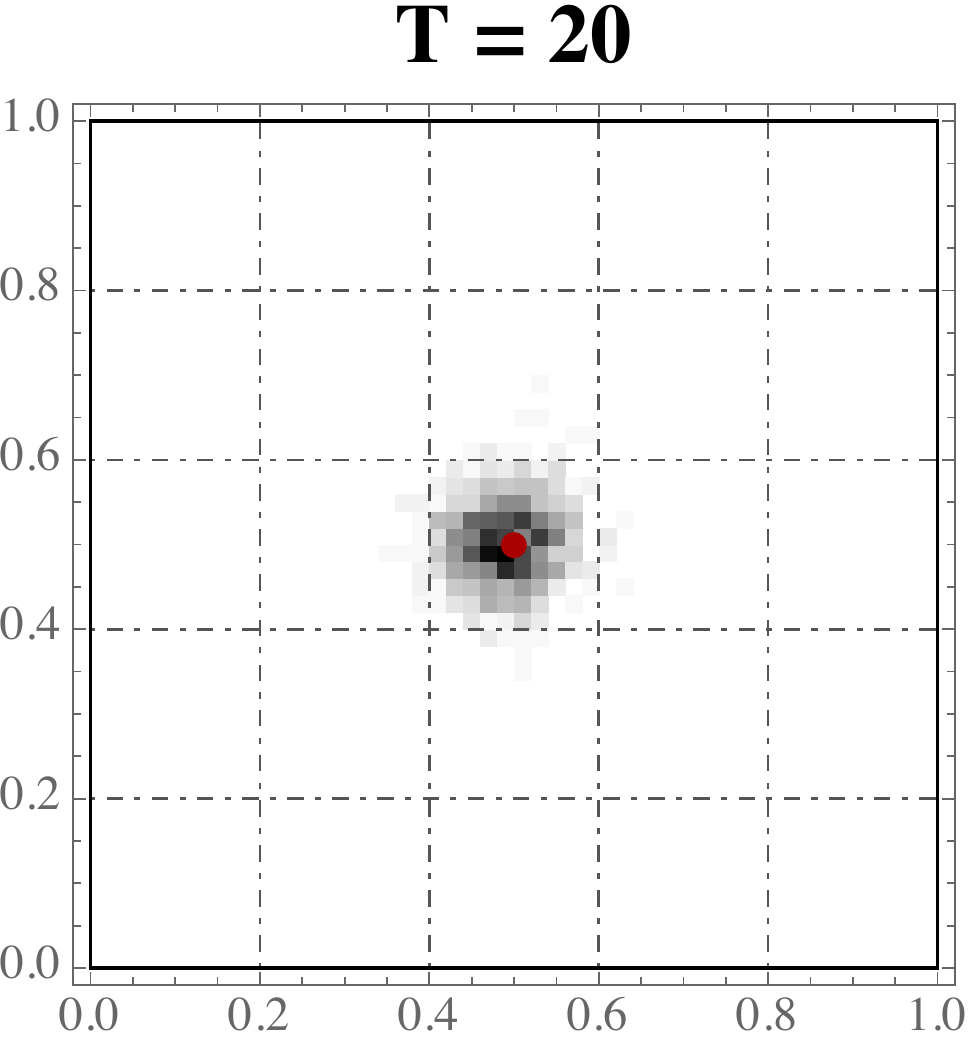}
\includegraphics[width=.23\textwidth]{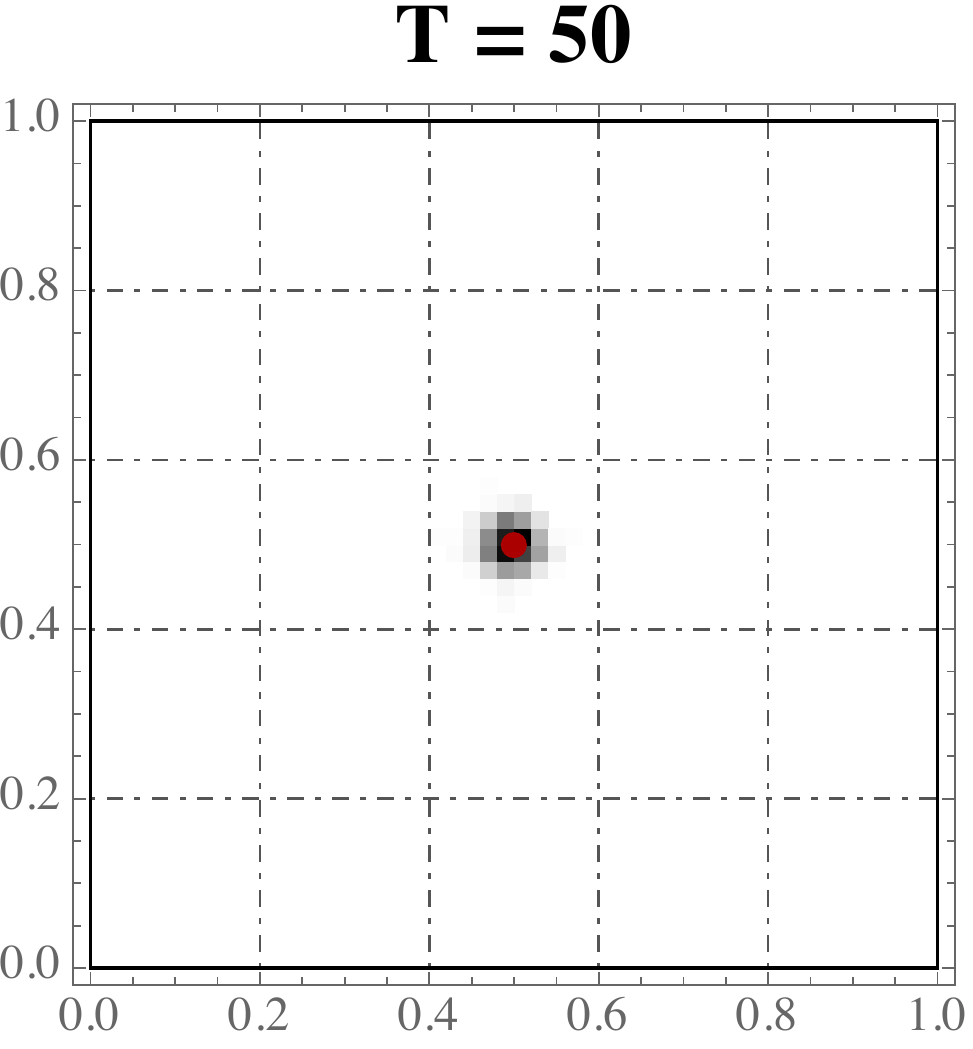}
} 
} 

\caption{\footnotesize
Time averages of \eqref{eq:SRL} with logit best responses in a game of Matching Pennies \textup(as in Fig.~\ref{fig:portraits}, Nash equilibria are depicted in red and the game's payoff's are displayed inline; for benchmarking purposes, we also took $\sigma_{k\alpha}=1$ for all $\alpha\in\act_{k}$, $k=1,2$\textup).
Fig.~\ref{fig:averages-sample} shows the evolution of a sample trajectory and its time average;
in Fig.~\ref{fig:averages-density}, we show a density plot of the distribution of $10^{4}$ time-averaged trajectories for different values of the integration horizon $T$.
In tune with Proposition \ref{prop:zero-sum}, we see that time averages converge to the game's Nash equilibrium.%
}
\label{fig:averages}
\end{figure}

In the case of \eqref{eq:EW}/\eqref{eq:RD}, the sublinear growth requirement for $Y_{k\alpha} - Y_{k\beta}$ boils down to the permanency condition $\liminf_{t\to\infty} X_{k\alpha}(t) > 0$, so we recover the original result of \cite{HS98}.
Other than that however, the applicability of Theorem \ref{thm:averages} is limited by the growth requirement for $Y_{k\alpha}(t) - Y_{k\beta}(t)$.
The following proposition (whose proof can be found in Appendix~\ref{app:main}) shows that this condition always holds in $2$-player zero-sum games under the additional assumption that $\lim_{t\to\infty} \temp_{k}(t) = 0$:

\begin{proposition}
\label{prop:zero-sum}
Let $\game$ be a $2$-player zero-sum game with an interior Nash equilibrium, and assume that \eqref{eq:SRL} is run with $\temp_{k}(t)$ satisfying $\lim_{t\to\infty} \temp_{k}(t) = 0$ \textup(in addition to Assumption \ref{asm:temp}\textup).
Then, the time average of \eqref{eq:SRL} converges to the set of Nash equilibria of $\game$ \textup(a.s.\textup).
\end{proposition}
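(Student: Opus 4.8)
The plan is to verify the hypothesis of Theorem \ref{thm:averages}: it suffices to show that each score difference $Y_{k\alpha}(t)-Y_{k\beta}(t)$ grows sublinearly in $t$, almost surely. Splitting the score into its drift and martingale parts, $Y_{k\alpha}(t)=\hat Y_{k\alpha}(t)+M_{k\alpha}(t)$ with $\hat Y_{k\alpha}(t)=\int_{0}^{t}\payv_{k\alpha}(X(s))\dd s$ and $M_{k\alpha}(t)=\int_{0}^{t}\sigma_{k\alpha}(X(s))\dd W_{k\alpha}(s)$, the martingale part is immediately harmless: since the diffusion coefficients are bounded on the compact set $\strat$, Lemma \ref{lem:Wbound} gives $M_{k\alpha}(t)=\bigoh(\sqrt{t\log\log t})=o(t)$. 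Hence everything reduces to controlling the drift scores $\hat Y_{k\alpha}$.

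The engine of the argument is to couple the zero-sum structure with the no-regret guarantee of Theorem \ref{thm:consistent}. Fixing the interior equilibrium $\eq$, I would introduce for each player $k$ the cumulative shortfall $R_k(t)=\int_{0}^{t}\braket{\payv_k(X(s))}{\eq_k-X_k(s)}\dd s$. First I would establish the exact pointwise identity $\sum_k\braket{\payv_k(x)}{\eq_k-x_k}=0$ for all $x\in\strat$: in a two-player zero-sum game $\sum_k\pay_k(x)=0$, while interiority of $\eq$ forces $\payv_k(\eq)$ to be constant across the actions of each player, so $\braket{\payv_k(x)}{\eq_k}$ collapses to the value of the game and the two players' contributions cancel. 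Integrating gives the conservation law $\sum_k R_k(t)\equiv 0$.

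Next, each player can be viewed as running the unilateral scheme \eqref{eq:USRL} against the bounded payoff stream $\payv_k(X(s))$ produced by the opponent. Under $\lim_{t\to\infty}\temp_k(t)=0$ together with Assumption \ref{asm:temp}, every term of \eqref{eq:reg-bound} is sublinear, so player $k$ is consistent; since a fixed mixed comparator can do no better than the best fixed action, $R_k(t)\le\reg_k(t)=o(t)$ almost surely. Combining this one-sided bound with $\sum_j R_j\equiv 0$ forces $R_k(t)=-\sum_{j\ne k}R_j(t)\ge-\sum_{j\ne k}o(t)$, so that $\abs{R_k(t)}=o(t)$ and $\reg_k(t)=o(t)$ for each player individually. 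The crucial upgrade is then to pass from these aggregate quantities to individual score differences. Subtracting $R_k$ from $\reg_k$ cancels the realized-payoff term and leaves $\sum_{\alpha}\eq_{k\alpha}\bigl(\max_{\gamma}\hat Y_{k\gamma}(t)-\hat Y_{k\alpha}(t)\bigr)=o(t)$; every summand is nonnegative, and because $\eq$ is interior there is a constant $c>0$ with $\eq_{k\alpha}\ge c$ for all $\alpha$, so each gap $\max_{\gamma}\hat Y_{k\gamma}(t)-\hat Y_{k\alpha}(t)$ is itself $o(t)$. This yields $\hat Y_{k\alpha}(t)-\hat Y_{k\beta}(t)=o(t)$, and adding back the $o(t)$ martingale parts gives $Y_{k\alpha}(t)-Y_{k\beta}(t)=o(t)$ for all $\alpha,\beta\in\act_k$, $k=1,2$; Theorem \ref{thm:averages} then closes the proof.

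I expect the delicate point to be precisely this last upgrade from the aggregate no-regret guarantee to the sublinearity of each individual score difference. The regret estimate only controls an $\eq_k$-weighted average of the per-action score gaps, so without the strict positivity $\eq_{k\alpha}>0$ supplied by an \emph{interior} equilibrium one cannot conclude that the gaps along actions outside $\supp(\eq_k)$ are sublinear — this is exactly why the interiority hypothesis (used a second time already in making $\sum_k R_k$ vanish identically) is indispensable. The remaining steps are routine bookkeeping: checking that the diffusion terms are uniformly bounded so that Lemma \ref{lem:Wbound} applies, and that the finitely many almost-sure statements (one per triple $(k,\alpha,\beta)$) may be intersected.
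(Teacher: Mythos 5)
Your proof is correct, but it reaches the hypothesis of Theorem \ref{thm:averages} by a genuinely different route than the paper. The paper's proof works directly with the summed, rate-adjusted Fenchel coupling $\fench_{p}(t)=\sum_{k=1,2}\fench_{k}(p_{k},\temp_{k}(t)Y_{k}(t))$ relative to the interior equilibrium: Itô's formula (Lemma \ref{lem:Fenchel-gradient}) yields the four-term decomposition \eqref{eq:dF}; the game term \eqref{eq:dF-game} vanishes identically by exactly the zero-sum/interiority cancellation you establish; the temperature, noise, and Itô-correction terms are each bounded sublinearly (the last using $\pd^{2}h^{\ast}\big/\pd y_{\beta}^{2}\leq 1/K$ and $\temp_{k}(t)\to 0$); and Lemma \ref{lem:ydiff} then converts $\fench_{p}(t)=o(t)$ into sublinear score differences. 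You instead black-box Theorem \ref{thm:consistent} player-wise, sandwich each player's equilibrium shortfall $R_{k}$ through the conservation law $\sum_{k}R_{k}\equiv 0$, and exploit the strict positivity of the interior equilibrium weights to upgrade the $\eq_{k}$-weighted average of per-action score gaps to individual gaps. Your route avoids a second Itô computation and Lemma \ref{lem:ydiff} altogether (including the paper's case distinction on whether $\limsup_{t\to\infty}\fench_{p}(t)\geq m>0$), and it isolates very cleanly the two distinct places where interiority is indispensable; the paper's route buys self-containedness at the level of the coupling, with all stochastic estimates done once and in one place. One point you should make explicit: Theorem \ref{thm:consistent} is stated for \eqref{eq:USRL} with a deterministic, locally integrable stream $\payv(t)$ and time-dependent noise coefficients, whereas you invoke it for the adapted random stream $\payv_{k}(X(t))$ and coefficients $\sigma_{k\alpha}(X(t))$. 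This is legitimate \textemdash\ the proof of Theorem \ref{thm:consistent} is pathwise and uses only boundedness and adaptedness of the integrands, so Itô's formula and Lemma \ref{lem:Wbound} apply verbatim, and the game-against-an-opponent reading is precisely the one intended in Section \ref{sec:regret} \textemdash\ but it extends the statement as written and merits a sentence; similarly, the $\bigoh(\sqrt{t\log\log t})$ bound on the martingale parts comes from the proof of Lemma \ref{lem:Wbound} (time change plus the law of the iterated logarithm) rather than from its statement.
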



Proposition \ref{prop:zero-sum} is reminiscent of Theorem \ref{thm:consistent} on the no-regret properties of \eqref{eq:SRL} in that both results require a vanishing learning parameter.
In the noiseless regime however ($\sigma=0$), this is not the case:
in the replicator dynamics (which correspond to \eqref{eq:EW} with $\temp=1$), the conclusions of Proposition \ref{prop:zero-sum} still hold essentially verbatim \citep{HS98}.
Intuitively, the reason for this is that time-averaged replicator orbits exhibit the same long-term behavior as the (deterministic) best response dynamics of \cite{GM91}:
\begin{equation}
\label{eq:BRD}
\tag{BRD}
\dot x_{k}
	\in \brep_{k}(x) - x_{k},
\end{equation}
where $\brep_{k}(x) \equiv \argmax_{x_{k}'\in\strat_{k}} \smallbraket{\payv_{k}(x)}{x_{k}'}$ denotes the standard (non-regularized) best response correspondence of player $k$.
Thus, given that the dynamics \eqref{eq:BRD} converge to equilibrium in zero-sum games, the same must also hold for the time averages of \eqref{eq:RD}.

More precisely, \cite{HSV09} showed that the $\omega$-limit set $\Omega$ of time-averaged solutions of \eqref{eq:RD} is \emph{\acl{ICT}} (\acs{ICT}) under \eqref{eq:BRD},
\acused{ICT}
i.e. any two points in $\Omega$ may be joined by a piecewise continuous ``chain'' of arbitrarily long orbit segments of \eqref{eq:BRD} in $\Omega$ broken by arbitrarily small jump discontinuities.%
\footnote{In particular, \ac{ICT} sets are invariant, connected and have no proper attractors;
for a full development, see \citealp{BHS05}).}
\cite{MS16} subsequently established a similar averaging principle linking the deterministic reinforcement learning scheme \eqref{eq:RL} to \eqref{eq:BRD}.
Importantly, as we show below, the \emph{stochastic} dynamics \eqref{eq:SRL} share the same connection to the \emph{deterministic} best response dynamics \eqref{eq:BRD}, despite all the noise:

\begin{theorem}
\label{thm:BRD}
Let $X(t)$ be a solution orbit of \eqref{eq:SRL} for a $2$-player game $\game$.
Then, the $\omega$-limit set of $\bar X(t)$ is \acl{ICT} under the deterministic best response dynamics \eqref{eq:BRD}.
\end{theorem}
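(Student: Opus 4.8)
The plan is to exhibit the time-averaged process, after a logarithmic reparametrization of time, as an asymptotic pseudotrajectory of the best response dynamics \eqref{eq:BRD}, and then invoke the general theory of \citet{BHS05} to conclude that its $\omega$-limit set is internally chain transitive. First I would record the elementary identity obtained by differentiating \eqref{eq:average},
\[
\frac{d}{dt}\bar X(t) = \frac{1}{t}\bigl(X(t) - \bar X(t)\bigr),
\]
and pass to the logarithmic time scale $u = \log t$, a homeomorphism of $[1,\infty)$ onto $[0,\infty)$ that leaves the $\omega$-limit set in $\strat$ unchanged. This turns the identity into
\[
\frac{d}{du}\bar X = X(e^{u}) - \bar X,
\]
so it suffices to show that the driving term $X(e^{u})$ asymptotically tracks the best response set $\brep(\bar X(u))$; then $\tfrac{d}{du}\bar X$ lies asymptotically in $\brep(\bar X) - \bar X$, which is exactly the field of \eqref{eq:BRD}.

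Next I would reduce the stochastic analysis to the deterministic one by absorbing the noise into a vanishing payoff perturbation. Using the linearity of $\payv_{k\alpha}$ in the opponent's strategy (valid in $2$-player games, exactly as in the proof of Theorem \ref{thm:averages}), the scores split as
\[
Y_{k\alpha}(t) = Y_{k\alpha}(0) + t\,\payv_{k\alpha}(\bar X(t)) + M_{k\alpha}(t),
\qquad M_{k\alpha}(t) = \int_0^t \sigma_{k\alpha}(X(s))\dd W_{k\alpha}(s).
\]
By Lemma \ref{lem:Wbound} (law of the iterated logarithm), $M_{k\alpha}(t) = \bigoh(\sqrt{t\log\log t}) = o(t)$ almost surely, whence
\[
\temp_{k}(t)\,Y_{k}(t) = t\temp_{k}(t)\bigl[\payv_{k}(\bar X(t)) + \eps_{k}(t)\bigr],
\qquad \eps_{k}(t)\to 0 \ \text{(a.s.)}.
\]
Since $t\temp_{k}(t)\to+\infty$ by Assumption \ref{asm:temp}, the vector fed to $\choice_{k}$ is a large multiple of a vector converging to $\payv_{k}(\bar X(t))$, i.e. precisely the hardening regime in which the regularized choice map $\choice_k(\lambda v) = \argmax_{x_k}\{\braket{v}{x_k} - \lambda^{-1} h_k(x_k)\}$ collapses onto the hard best response correspondence. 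Thus the stochastic perturbation has been subsumed into the vanishing deterministic perturbation $\eps_k(t)$, and the remaining argument is exactly the hardening analysis of \cite{MS16}.

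Appealing to this hardening property, $X(e^{u})$ is asymptotically a best response to $\bar X(u)$, so $\bar X$ (in log time) is a perturbed solution of $\dot w \in \brep(w) - w$ with defect $\delta(u)\to 0$. As $\brep$ is upper semicontinuous with nonempty compact convex values, $F(w) = \brep(w) - w$ satisfies the Marchaud conditions, $\bar X$ is a \emph{bounded} asymptotic pseudotrajectory of \eqref{eq:BRD}, and the \citet{BHS05} invariance theorem gives that its $\omega$-limit set is internally chain transitive; undoing the time change yields the claim for $\bar X(t)$. The main obstacle is making the hardening step uniform along the trajectory: at times when $\payv_{k}(\bar X(t))$ is close to — but not exactly at — a tie, $\brep_k$ is a single vertex while the choice probabilities, sensitive to the subleading term $\eps_k(t)$, may linger near the adjacent face, so the naive distance $\dist(X(e^{u}),\brep(\bar X(u)))$ need not vanish pointwise. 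The remedy is to work with the inflated inclusion $F^{\delta}$ rather than $F$: a nudge of $\bar X$ of size $\bigoh(\delta)$ to the configuration equalizing the near-tied payoffs turns $\brep_k$ into the full optimal face, which does capture the limiting behavior of $X(e^{u})$. This set-valued bookkeeping is already present in the deterministic treatments of \cite{MS16} and \cite{HSV09}, so the only genuinely new ingredient is the almost-sure estimate $M_{k\alpha}(t) = o(t)$ that keeps $\eps_k(t)$ vanishing despite arbitrarily large noise.
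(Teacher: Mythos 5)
Your proposal is correct and takes essentially the same route as the paper's proof: the same two-player linearity decomposition $Y_{k}(t) = t\,\payv_{k}(\bar X(t)) + Y_{k}(0) + \xi_{k}(t)$ with Lemma \ref{lem:Wbound} killing the martingale term, the same logarithmic time change giving $\frac{d}{d\tau}\bar X = X - \bar X$, and the same conclusion via the perturbed-solution machinery of \citet{BHS05}. The only presentational difference is the hardening step, where the paper invokes Berge's maximum theorem for the parametric problem \eqref{eq:choice1} while you handle the near-tie upper-semicontinuity issue explicitly via the inflated inclusion $F^{\delta}$ \textemdash\ which is precisely what being a perturbed solution in the sense of \citet[Def.~III]{BHS05} requires, so the two treatments coincide.
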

\begin{proof}
  Our proof follows the approach of \cite{HSV09} and is presented in detail in Appendix~\ref{app:main}.
\end{proof}

Thanks to Theorem \ref{thm:BRD}, several conclusions of \cite{HSV09} for $2$-player games can be readily generalized to the full \emph{stochastic} setting of \eqref{eq:SRL} simply by exploiting the properties of the \emph{deterministic} dynamics \eqref{eq:BRD}:
\begin{enumerate}
\setlength{\itemsep}{0pt}
\item
If $\bar X(t)$ converges under \eqref{eq:SRL}, its limit is a Nash equilibrium.
\item
Any global attractor of \eqref{eq:BRD} also attracts time averages of \eqref{eq:SRL}, independently of the noise level.
In particular, since the set of Nash equilibria is globally attracting under \eqref{eq:BRD} in zero-sum games, this observation extends Proposition \ref{prop:zero-sum} to the constant $\temp$ case.
\item
The only \ac{ICT} sets of \eqref{eq:BRD} in potential games consist of (isolated) components of Nash equilibria, so $\bar X(t)$ converges to one such component.
\end{enumerate}

\section{Discussion}
\label{sec:discussion}

\subsection{Correlated observation noise}

An important assumption underlying the noisy learning model \eqref{eq:SRL} is that the perturbations to the players' payoff observations are stochastically independent across players and strategies.
However, in many cases of practical interest, such observations are actually \emph{correlated}:
in congestion games for example, a choice of action corresponds to a choice of route in a road network, and two routes that overlap would have the same uncertainty characteristics over common edges/road segments (and similarly for correlations across different players).

To account for correlations of this sort, we can consider the more general learning model 
\begin{equation}
\label{eq:SRL-corr}
\tag{SRL$_{\textup{c}}$}
\begin{aligned}
dY_{k\alpha}
	&= \payv_{k\alpha}(X) \dd t + \dd Z_{k\alpha},
	\\
X_{k}
	&= \choice_{k}(\temp_{k} Y_{k}),
\end{aligned}
\end{equation}
where $Z_{k\alpha}$ is a family of Itô martingale processes with quadratic variation
\begin{equation}
\label{eq:corr}
d[Z_{k\alpha}, Z_{j\beta}]
	= dZ_{k\alpha} \cdot dZ_{k\beta}
	= \Sigma_{\alpha\beta}^{kj} \dd t
	\qquad
	k,j\in\play,
	\alpha\in\act_{k},
	\beta\in\act_{j},
\end{equation}
and such that $\sup_{t} \norm{\Sigma(t)} < 0$.
Obviously, if $\Sigma$ is block-diagonal (i.e. $\Sigma_{\alpha\beta}^{kj} \propto \delta_{\alpha\beta} \delta_{kj}$), we recover the original learning model (SRL) with uncorrelated uncertainty across strategies and payoffs;
otherwise, in the non-diagonal case, the covariance matrix $\Sigma$ simply captures the possible correlations between the players' observations.

A first example of this type is provided by \cite{Cab00} where $Z_{k\alpha}$ is assumed to be of the form $dZ_{k\alpha} = \sum_{r=1}^{d} \sigma_{k\alpha}^{r} \dd W_{r}$ with $W(t)$ denoting a standard Wiener process in $\R^{d}$.
Such correlations also occur naturally in the study of congestion games:
indeed, let $\graph = (\vertices,\edges)$ be a (non-oriented) graph with vertex set $\vertices$ and edge set $\edges$, and let $\play\subseteq\vertices\times\vertices$ denote a set of origin-destination pairs in the network.
In (finite, nonsplittable) congestion games, each player $k$ is typically identified with an origin-destination pair $(v,w)\in\play\subseteq\vertices\times\vertices$, and his actions are drawn from a set $\act_{k}$ of (acyclic) paths in $\graph$ that join $v$ to $w$.
The cost associated to a choice of path $\alpha\in\act_{k}$ is then given by the total time taken to traverse path $\alpha$, i.e. $\loss_{k\alpha} = \sum_{r\in\alpha} \loss_{r} = \sum_{r\in\edges} P_{r\alpha}\loss_{r}$, where $\loss_{r}$ denotes the corresponding delay function of edge $r\in\edges$ (a function of the number of users who use $r\in\edges$) and $P_{r\alpha} = \one\{r\in\alpha\}$ denotes the edge-path incidence matrix of the network.
As a result, if the delay across each edge is only observable up to a random error, the players' score updates would take the form:
\begin{equation}
dY_{k\alpha}
	= - \sum_{r\in\alpha} (\loss_{r}(X) \dd t + \sigma_{r} \dd W_{r})
	= - \loss_{k\alpha}(X) \dd t + \dd Z_{k\alpha},
\end{equation}
with $dZ_{k\alpha} = -\sum_{r\in\edges} P_{r\alpha} \sigma_{r} \dd W_{r}$.
In turn, this leads to the covariance matrix
\begin{flalign}
d[Z_{k\alpha},Z_{j\beta}]
	&= \sum_{r,r'\in\edges} P_{r\alpha} P_{r'\beta} \sigma_{r} \sigma_{r'} \dd W_{r}\cdot dW_{r'}
	\notag\\
	&= \sum_{r\in\edges} P_{r\alpha} P_{r\beta} \sigma_{r}^{2} \dd t
	= \sum_{r\in\alpha\cap\beta} \sigma_{r}^{2} \dd t,
\end{flalign}
i.e. stochastic fluctuations across different paths $\alpha,\beta$ are correlated precisely over the paths' common edges.

The question that arises in this context is whether the rationality analysis of the previous sections still applies to the more general model \eqref{eq:SRL-corr} with correlated uncertainty.
The short answer to this is that the qualitative part of our analysis goes through essentially unchanged:

\begin{proposition}
\label{prop:corr}
Assume that \eqref{eq:SRL-corr} is run with a vanishing learning parameter $\temp(t)$.
Then, with notation as in the rest of the paper:
\begin{enumerate}
[1.]
\item
\label{itm:regret-corr}
\eqref{eq:SRL-corr} leads to no regret.
\item
\label{itm:dom-corr}
Dominated strategies become extinct under \eqref{eq:SRL-corr}.
\item
\label{itm:strict-corr}
Strict Nash equilibria are stochastically asymptotically stable under \eqref{eq:SRL-corr}.
\item
\label{itm:zerosum-corr}
In $2$-player zero-sum games with an interior equilibrium, the time-averaged process $\bar X(t) = t^{-1} \int_{0}^{t} X(s) \dd s$ converges to the game's set of Nash equilibria.
\end{enumerate}
Finally, properties \eqref{itm:dom-corr} and \eqref{itm:strict-corr} above hold even without the assumption $\temp(t)\to0$.
\end{proposition}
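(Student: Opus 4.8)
The plan is to revisit each of the four arguments that established the corresponding statements for the uncorrelated model \eqref{eq:SRL} and to verify that the only property of the driving noise they ever use is a uniform bound on its quadratic variation. The unifying observation is that every one of these proofs works \emph{directly} with the score process $Y_{k}(t)$ \textemdash\ through the Fenchel coupling \eqref{eq:Fenchel} and the Lipschitz estimate of Proposition \ref{prop:Fenchel} \textemdash\ and never with the pushed-forward dynamics on $\strat$. Consequently the noise always enters through a scalar martingale $\xi_{k}(t)$ obtained by pairing the increments $dZ_{k}$ with a fixed vector $v$, and the sole analytic input is the growth bound $\xi_{k}(t) = \bigoh(\sqrt{t\log\log t})$ furnished by Lemma \ref{lem:Wbound}. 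Since such a $\xi_{k}$ has quadratic variation $d[\xi_{k}] \le \norm{\Sigma(t)}\,\norm{v}^{2}\,dt$, and $\sup_{t}\norm{\Sigma(t)} < \infty$ by hypothesis, $[\xi_{k}](t)$ is bounded by a constant multiple of $t$; the law of the iterated logarithm (applied after a Dambis\textendash Dubins\textendash Schwarz time change to the continuous martingale $\xi_{k}$) then delivers exactly the same bound as in the Wiener case. This is the engine behind all four items.

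With this reduction in hand I would treat the four claims in turn. For no regret (item \eqref{itm:regret-corr}) I would rerun the Fenchel-coupling computation behind Theorem \ref{thm:consistent}: the martingale contribution to $\fench(p,\temp(t)Y(t))$ is controlled by the bound above, while the second-order It\^o term, formerly $\sigma_{\max}^{2}\abs{\act}/(2K)\int_{0}^{t}\temp$, is now of the form $\tfrac{1}{2K}\int_{0}^{t}\temp(s)\,\tr\Sigma(s)\,ds$, which is again $\bigoh(\int_{0}^{t}\temp)$ because $\tr\Sigma \le \abs{\act}\norm{\Sigma}$ is bounded (and reduces to the old term when $\Sigma$ is diagonal); the requirement $\temp(t)\to0$ keeps it sublinear exactly as before. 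For extinction of dominated strategies (item \eqref{itm:dom-corr}), the martingale $\xi_{k}$ in the proof of Theorem \ref{thm:dom} becomes the component of $Z_{k}(t)$ along $p_{k}-p_{k}'$, whose quadratic variation $(p_{k}-p_{k}')^{\top}\Sigma^{kk}(p_{k}-p_{k}')\,t$ is bounded, so the drift estimate $m_{k}t+\xi_{k}(t)\sim m_{k}t$ and hence $\fench_{k}(p_{k},\temp_{k}Y_{k})\to\infty$ persist unchanged. For strict equilibria (item \eqref{itm:strict-corr}), Proposition \ref{prop:stability} transfers verbatim, since again only the growth of the pairing martingale is used. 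Finally, for zero-sum time averages (item \eqref{itm:zerosum-corr}), the proofs of Theorem \ref{thm:averages} and Proposition \ref{prop:zero-sum} use only that the martingale part of $Y_{k\alpha}-Y_{k\beta}$ is $o(t)$, which holds as above.

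The main obstacle is the quantitative half of item \eqref{itm:strict-corr}, which cannot be reduced to a growth bound alone: stochastic asymptotic stability requires estimating the probability that the relevant score difference \textemdash\ a continuous martingale with positive drift \textemdash\ ever attains a prescribed negative level, and here the correlation structure enters beyond $\sup_{t}\norm{\Sigma(t)}$. The clean route is the Dambis\textendash Dubins\textendash Schwarz representation $\xi_{k}(t)=B([\xi_{k}](t))$ for a standard Brownian motion $B$; because the instantaneous variance of $\xi_{k}$ is pinched between $0$ and $\norm{\Sigma}\,\norm{v}^{2}$, the clock $[\xi_{k}](t)$ is comparable to $t$, and the Girsanov level-crossing computation of Appendix \ref{app:main} then applies after this reparametrization (the degenerate case of vanishing variance only simplifies matters, as it removes the martingale entirely). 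It remains to note that properties \eqref{itm:dom-corr} and \eqref{itm:strict-corr} survive for constant $\temp$: this is immediate from the mechanism above, since for these two items the payoff drift dominates the bounded-variance martingale whether or not $\temp$ decays, whereas items \eqref{itm:regret-corr} and \eqref{itm:zerosum-corr} genuinely need $\temp(t)\to0$ to neutralize the accumulated It\^o and noise contributions.
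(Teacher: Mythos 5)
Your proposal is correct and follows essentially the same route as the paper's own (sketched) proof: replace the Wiener terms by $dZ_{k\alpha}$, observe that every argument uses the scalar pairing martingales only through the bound $d[\xi_{k},\xi_{k}]\leq M\,dt$ (so that the time-change/law-of-the-iterated-logarithm estimate of Lemma~\ref{lem:Wbound} and the level-crossing bound for strict equilibria go through), and replace the It\^o correction by $\tr[\hess(h^{\ast})\,\Sigma]$, bounded via strong convexity and $\sup_{t}\norm{\Sigma(t)}<\infty$. Your only over-caution is in item~\eqref{itm:strict-corr}: the hitting-probability argument needs just the \emph{upper} bound on the quadratic-variation clock, so no two-sided comparability of $[\xi_{k}](t)$ with $t$ is required \textendash\ which your parenthetical on the degenerate case already effectively concedes.
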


The proof of Proposition \ref{prop:corr} is similar to that of Theorems \ref{thm:consistent}, \ref{thm:dom}, \ref{thm:folk}, and Proposition \ref{prop:zero-sum}, so we only offer a short sketch in Appendix \ref{app:main}.
On the other hand, the quantitative bounds that we derived (e.g. for the player's regret or for the rate of elimination of dominated strategies) clearly cannot hold as stated because of the non-diagonal components of the quadratic variation matrix $\Sigma$.
It is still possible to calculate explicitly the corresponding expressions under the correlated noise model \eqref{eq:corr} but the end result is fairly messy so we do not present it here.

\subsection{Abrupt shocks}

Another important assumption underlying the noisy learning model \eqref{eq:SRL} (or its correlated version above) is that uncertainty enters the process in the form of a continuous, non-anticipative diffusion process.
This model is sufficiently wide to cover the case of noise processes whose aggregate impact scales as a function of the observation window but it does not account for discontinuous jump processes that represent very abrupt perturbations (such as defaults and stock price crashes, momentary link failures in data networks, etc.).

Processes of this type are usually modeled in probability theory as \emph{semimartingales}, i.e. stochastic processes that can be decomposed as a local martingale (e.g. a Wiener process) plus an adapted process with finite variation (e.g. any piecewise continuous martingale).
Given that the Itô integral can also be defined for semimartingales, it would also be possible to extend the scope of \eqref{eq:SRL} even further by replacing the Brownian noise terms $dW$ by an arbitary (adapted) semimartingale.
In this case however, Itô's lemma would also carry additional terms stemming from the discontinuous component of the noise process, thus complicating the analysis considerably;
because of this, we relegate a detailed discussion of this issue to a future paper.

\appendix
\section*{Appendix}

\section{Technical proofs}
\label{app:main}

\paragraph{Caveat}
In this appendix (and, in particular, in all applications of Itô's lemma), we will be treating the players' choice map $\choice$ as if it were $C^{1}$.
This is always the case if the players' penalty functions are steep (i.e. $\norm{dh(x_{n})}\to\infty$ whenever $x_{n}\to\bd(\strat)$), but not otherwise:
for instance, the closest-point projection map induced by the quadratic penalty function $h(x) = \frac{1}{2} \norm{x}^{2}$ is piecewise smooth but not $C^{1}$.
The use of Itô's lemma in the non-steep case is justified by the fact that $\choice$ is absolutely continuous \citep[Chap. 26]{Roc70} and the work of \citet[Theorem 2.1]{AJ05} who provide a nonsmooth variant of Itô's lemma that allows us to treat $h^{\ast}$ as if it were $C^{2}$ whenever it appears in a stochastic integral.

\subsection{Proof of Proposition \ref{prop:evolution}}

For simplicity, the player index $k$ is suppressed in this proof;
also, all summation indices are assumed to run over the (constant) support $\act'$ of $X(t)$.

By the \ac{KKT} conditions for the convex problem \eqref{eq:choice}, we get $Y_{\alpha} - \temp^{-1} \theta'(X_{\alpha}) = \zeta$ where $\zeta$ is the Lagrange multiplier corresponding to the probability constraint $\insum_{\alpha\in\supp(X)} X_{\alpha} = 1$.
Itô's formula then gives
\begin{equation}
d\zeta
	= \dd Y_\alpha
	+ \frac{\dot \temp}{\temp^{2}} \theta_{\alpha}' \dd t
	- \temp^{-1} \theta_{\alpha}'' \dd X_\alpha
	- \frac{1}{2} \temp^{-1} \theta_{\alpha}''' (dX_{\alpha})^{2}
\end{equation}
and hence:
\begin{equation}
\label{eq:aux1}
\dd X_\alpha
	= \frac{\temp}{\theta''_\alpha}
	\left[
		dY_{\alpha}
		+ \frac{\dot \temp}{\temp^{2}} \theta_{\alpha}' \dd t
		-  \frac{1}{2\temp} \theta_{\alpha}''' (dX_{\alpha})^{2}
		- \dd\zeta
	\right].
\end{equation}
Now, write $dX_{\alpha}$ in the general form:
\begin{equation}
\label{eq:aux2}
dX_{\alpha}
	= b_{\alpha} \dd t
	+ \insum_{\beta} c_{\alpha\beta} \dd W_\beta,
\end{equation}
where the drift and diffusion coefficients $b_{\alpha}$ and $c_{\alpha\beta}$ are to be determined.
To do so, recall that the Wiener processes $W_{\alpha}$ are independent (in the sense that $dW_{\alpha} \cdot dW_{\beta} = \delta_{\alpha\beta}$), so we get $(dX_{\alpha})^{2} = \insum_{\beta} c_{\alpha\beta}^{2} \dd t$;
therefore, summing \eqref{eq:aux1} over $\alpha\in\act'$ and solving for $d\zeta$ yields:
\begin{equation}
\frac{1}{\Theta''} \dd \zeta
	= \insum_{\alpha} \frac{1}{\theta_{\alpha}''}
	\left[
	\dd Y_{\alpha}
	+ \frac{\dot \temp}{\temp^{2}} \theta_{\alpha}' \dd t
	- \frac{1}{2\temp}\theta_{\alpha}''' \insum_{\beta} c_{\alpha\beta}^{2} \dd t
	\right ].
\end{equation}

Substituting this last expression in \eqref{eq:aux1} leads to \eqref{eq:evolution} with $U_{\alpha}^{2} = \insum_{\beta} c_{\alpha\beta}^{2}$, so we are left to show that $U_{\alpha}^{2}$ has the prescribed form.
To that end, by comparing the diffusion terms of \eqref{eq:evolution} and \eqref{eq:aux2}, we get 
\begin{equation}
c_{\alpha \beta}
	= \frac{\temp}{\theta_\alpha''}
	\left[
	\sigma_{\alpha} \delta_{\alpha\beta} - \frac{\Theta''}{\theta_\beta''}\sigma_{\beta}
	\right],
\end{equation}
and hence:
\begin{flalign}
U_{\alpha}^{2}
	=\insum_{\beta} c_{\alpha\beta}^{2}
	&= \left(\frac{\temp}{\theta_\alpha''} \right )^{2}
	\insum_{\beta} \left( \sigma_{\alpha} \delta_{\alpha\beta} - \Theta'' \sigma_{\beta}\big/\theta_{\beta}'' \right)^{2}
	\notag\\
	&= \left(\frac{\temp}{\theta_\alpha''} \right )^{2}
	\left[
		\sigma_{\alpha}^{2} \left(1 - \Theta''\big/\theta_{\alpha}''\right)^{2}
		+ \insum_{\beta\neq\alpha}
		\left( \Theta''\big/\theta_{\beta}'' \right )^{2} \sigma_{\beta}^{2}
	\right],
\end{flalign}
which concludes our derivation of \eqref{eq:evolution}.

To show that the support of $X(t)$ is piecewise constant on a dense open subset of $[0,\infty)$, fix some $\alpha\in\act$ and let $A = \{t:X_{\alpha}(t)>0\}$ so that $A^{c} = X_{\alpha}^{-1}(0)$.
Then, $A$ is open because $X(t)$ is continuous (a.s.), so it suffices to show that $A\cup\intr(A^{c})$ is dense in $[0,\infty)$;
however, this is trivially true because of the identity $\cl(A)\cup\intr(A^{c}) = [0,\infty)$.
Finally, if $\lim_{x\to0^{+}} \theta'(x) = -\infty$, standard convex analysis arguments show that the (necessarily unique) solution of \eqref{eq:choice} lies in the relative interior of $\strat$ \citep[Chap.~26]{Roc70}, so we conclude that $X(t)\in\intstrat$ for all $t\geq0$ by the well-posedness of \eqref{eq:SRL}.

\subsection{Proof of Theorem \ref{thm:consistent}}
Our proof hinges on the rate-adjusted Fenchel coupling
\begin{equation}
\label{eq:Hp}
H_{p}
	\equiv \frac{1}{\temp} \fench(p,\temp Y)
	= \frac{1}{\temp} \cdot\left[ h(p) + h^{\ast}(\temp Y) - \braket{\temp Y}{p} \right]
\end{equation}
between a benchmark strategy $p\in\strat$ and the generating process $\temp Y$.
Specifically, to begin with, the Itô formula of Lemma \ref{lem:Fenchel-gradient} gives:
\begin{flalign}
\label{eq:dH1}
dH_{p}
	&= - \frac{\dot\temp}{\temp} H_{p} \dd t
	+ \frac{1}{\temp} \braket{d(\temp Y)}{X-p}
	+ \frac{1}{2\temp} \insum_{\beta} \frac{\pd^{2} h^{\ast}}{\pd y_{\beta}^{2}} \temp^{2} \sigma_{\beta}^{2} \dd t
	\notag\\
	&= -\frac{\dot\temp}{\temp} H_{p} \dd t
	+ \frac{\dot\temp}{\temp} \braket{Y}{X - p} \dd t
	+ \braket{dY}{X-p}
	+ \frac{\temp}{2} \insum_{\beta} \frac{\pd^{2} h^{\ast}}{\pd y_{\beta}^{2}} \sigma_{\beta}^{2} \dd t,
\end{flalign}
so, by combining the definition of $H_{p}$ with \eqref{eq:USRL}, we get:
\begin{flalign}
\label{eq:dH2}
dH_{p}
	&= - \frac{\dot\temp}{\temp^{2}} \left[ h(p) - h(X) \right] dt
	+ \braket{\payv}{X-p} dt
	\notag\\
	&+ \insum_{\beta} (X_{\beta} - p_{\beta})\,\sigma_{\beta} \dd W_{\beta}
	+ \frac{\temp}{2} \insum_{\beta} \frac{\pd^{2}h^{\ast}}{\pd y_{\beta}^{2}} \sigma_{\beta}^{2} \dd t,
\end{flalign}
where we used the fact that $h^{\ast}(\temp Y) = \braket{\temp Y}{X} - h(X)$ in the first line.
Therefore, the player's cumulative regret for not playing $p$ up to time $t$ will be:
\begin{subequations}
\label{eq:reg}
\begin{flalign}
\int_{0}^{t} \braket{\payv(s)}{p - X(s)} \dd s
	&\label{eq:reg-Hdiff}
	= H_{p}(0) - H_{p}(t)
	\\
	&\label{eq:reg-temp}
	-\int_{0}^{t} \frac{\dot\temp(s)}{\temp^{2}(s)} \left[ h(p) - h(X(s)) \right] ds
	\\
	&\label{eq:reg-noise}
	+ \insum_{\beta} \int_{0}^{t} (X_{\beta}(s) - p) \,\sigma_{\beta}(s) \dd W_{\beta}(s)
	\\
	&\label{eq:reg-Ito}
	+ \frac{1}{2} \insum_{\beta} \int_{0}^{t} \temp(s) \frac{\pd^{2} h^{\ast}}{\pd^{2} y_{\beta}^{2}} \sigma_{\beta}^{2}(s) \dd s.
\end{flalign}
\end{subequations}

We now proceed to bound each term of \eqref{eq:reg} by a sublinear function:
\begin{enumerate}
[\itshape a\upshape)]

\item
Since $H_{p}\geq0$, the term \eqref{eq:reg-Hdiff} is bounded from above as follows:
\begin{equation}
\label{eq:reg-Hdiff1}
H_{p}(0)
	\leq \frac{h(p) + h^{\ast}(0)}{\temp(0)}
	= \frac{h(p) - \min_{x\in\strat} h(x)}{\temp(0)}
	\leq \frac{\depth}{\temp(0)},
\end{equation}
where we have used the fact that $Y(0) = 0$.
\item
For \eqref{eq:reg-temp},
we have $h(p) - h(X(s)) \leq \depth$ by definition;
hence, with $\dot\temp \leq 0$, we get:
\begin{equation}
\eqref{eq:reg-temp}
	\leq - \depth \int_{0}^{t} \frac{\dot\temp(s)}{\temp^{2}(s)} \dd s
	= \frac{\depth}{\temp(t)} - \frac{\depth}{\temp(0)}
	= o(t).
\end{equation}

\item
By Lemma \ref{lem:Wbound}, we readily obtain that the term \eqref{eq:reg-noise} is $\bigoh(\sigma_{\max} \sqrt{t \log \log t})$.

\item
Finally, for \eqref{eq:reg-Ito}, recall that the Hessian of $h^{\ast}(y)$ is equal to the inverse Hessian of $h(x)$, suitably restricted to the affine hull of the face of the simplex spanned by $x=\choice(y)$ \textendash\ see e.g. Chap.~26 in \cite{Roc70}.
Thus, given that $h$ is $K$-strongly convex, \eqref{eq:strong} readily yields $\pd^{2} h^{\ast}\big/\pd y_{\beta}^{2} \leq K^{-1}$, and hence:
\begin{equation}
\eqref{eq:reg-Ito}
	\leq \frac{\abs{\act}}{2K} \sigma_{\max}^{2} \int_{0}^{t} \temp(s) \dd s,
\end{equation}
where $\sigma_{\max}^{2} = \sup_{t\geq0} \max_{\beta}\sigma_{\beta}^{2}(t)$.
L'Hôpital's rule then yields $t^{-1} \int_{0}^{t} \temp(s) \dd s \sim \temp(t) \to 0$, so \eqref{eq:reg-Ito} is also sublinear in $t$.
\end{enumerate}
The bound \eqref{eq:reg-bound} then follows by combining all of the above.

\subsection{Proof of Propositions \ref{prop:dom-asymptotics} and \ref{prop:dom-mean}}

We begin with some groundwork that will be used in the proof of both propositions.
Specifically, let $X(t)$ be a solution of \eqref{eq:SRL} with $X(0) = x\in\strat$.
Then,
suppressing the player index $k$ for simplicity,
we obtain:
\begin{flalign}
\label{eq:dY0}
dY_{\alpha} - dY_{\beta}
	&= \left(\payv_{\alpha}(X) - \payv_{\beta}(X)\right) \dd t
	+ \sigma_{\alpha}(X) \dd W_{\alpha} - \sigma_{\beta}(X) \dd W_{\beta}
	\notag\\
	&\leq - m \dd t - d\xi,
\end{flalign}
where $\xi(t) = \int_{0}^{t} \sigma_{\beta}(X(s)) \dd W_{\beta}(s) - \int_{0}^{t} \sigma_{\alpha}(X(s)) \dd W_{\alpha}(s)$ and $m$ is defined as in the statement of the proposition.
We thus get:
\begin{equation}
\label{eq:Xrate0}
Y_{\alpha}(t) - Y_{\beta}(t)
	\leq Y_{\alpha}(0) - Y_{\beta}(0) - mt - \xi(t),
\end{equation}
so the \ac{KKT} conditions for the problem \eqref{eq:choice} give:%
\begin{equation}
\label{eq:Xrate1}
\theta'(X_{\alpha}(t)) - \theta'(X_{\beta}(t))
	\leq \temp(t)\cdot\left[Y_{\alpha}(0) - Y_{\beta}(0) - mt - \xi(t)\right]
\end{equation}
(recall that $\theta$ is steep so the solutions of \eqref{eq:choice} are interior).
Thus, with $\temp(t)$ decreasing, we finally get:
\begin{equation}
\label{eq:Xrate2}
\theta'(X_{\alpha}(t))
	\leq C - \temp(t)\cdot\left[mt + \xi(t)\right],
\end{equation}
where $C = \theta'(1) + \temp(0) \cdot \smallabs{Y_{\alpha}(0) - Y_{\beta}(0)}$ depends only on the initial conditions of \eqref{eq:SRL}.
This bound will be central in both proofs.

\begin{proof}[Proof of Proposition \ref{prop:dom-asymptotics}]
Let $\rho = [\xi,\xi]$ denote the quadratic variation of $\xi$ (cf. the proof of Lemma \ref{lem:Wbound}).
Then, the independence of $W_{\alpha}$ and $W_{\beta}$ yields $d\rho = d\xi\cdot d\xi = \big(\sigma_{\alpha}^{2} + \sigma_{\beta}^{2}\big) \dd t$ and hence:
\begin{equation}
\label{eq:covest2}
\rho(t)
	= \int_{0}^{t} \left[ \sigma_{\alpha}^{2}(X(s)) + \sigma_{\beta}^{2}(X(s)) \right] \dd s
	\leq 2\sigma_{\alpha\beta}^{2}\,t.
\end{equation}
Consequently, invoking the time-change theorem for martingales \citep{Oks07}, let $\wilde W$ be a Wie\-ner process such that $\xi(t) = \wilde W(\rho(t))$.
Then, if $\lim_{t\to\infty}\rho(t) = \infty$, by the law of the iterated logarithm, we get:
\begin{equation}
\label{eq:xi-log}
\xi(t)
	= \wilde W(\rho(t))
	\leq (1+\eps)\sqrt{2 \rho(t) \log \log \rho(t)}
	\leq 2(1+\eps) \sigma_{\alpha\beta} \sqrt{t \log \log t}
	\quad(a.s.),
\end{equation}
for all $\eps>0$ and for all large enough $t$.
Otherwise, if $\lim_{t\to\infty}\rho(t) < \infty$, $\wilde W(\rho(t))$ will be bounded for all $t\geq0$ (a.s.), so we trivially get $\xi(t) \geq 2(1+\eps) \sigma_{\alpha\beta}\sqrt{t \log \log t}$ for all large enough $t>0$.
The bound \eqref{eq:dom-rate} is then obtained by combining \eqref{eq:Xrate2} with \eqref{eq:xi-log}.

Likewise, combining \eqref{eq:Xrate2} and \eqref{eq:covest2}, we obtain:
\begin{flalign}
\label{eq:Xrate3}
\prob\left(X_{\alpha}(t) > \delta\right)
	&= \prob\left( \theta'(X_{\alpha}(t)) > \theta'(\delta) \right)
	\notag\\
	&\leq \prob\left( \wilde W(\rho(t)) < -mt + \frac{C - \theta'(\delta)}{\temp(t)}\right)
	\notag\\
	&= \frac{1}{2}
	\erfc\left[
	\frac{1}{\sqrt{2\rho(t)}}
	\left(mt - \frac{C - \theta'(\delta)}{\temp(t)}\right)
	\right].
\end{flalign}
Since $m>0$, Assumption \ref{asm:temp} for the rate of decay of $\temp(t)$ guarantees that $m t\temp(t) > C - \theta'(\delta)$ for large $t$;
\eqref{eq:dom-bound} then follows by substituting \eqref{eq:covest2} in \eqref{eq:Xrate3}.
\end{proof}

\begin{proof}[Proof of Proposition~\ref{prop:dom-mean}]
Let $Z_{\alpha\beta}$ denote the RHS of \eqref{eq:dY0}, viz.:
\begin{equation}
\label{eq:dZ}
dZ_{\alpha\beta}
	= m \dd t + \sigma_{\beta} \dd W_{\beta} - \sigma_{\alpha} \dd W_{\alpha}.
\end{equation}
Then, by \eqref{eq:Xrate2}, we readily obtain
\begin{equation}
\tau_{\delta}
	\leq \tilde\tau_{a}
	\equiv \inf\big\{t>0: Z_{\alpha\beta}(t) \leq a \big\},
\end{equation}
with $a = \temp^{-1} [C - \theta'(\delta)]_{+}$ (obviously, $\tau_{\delta} = 0$ if $C - \theta'(\delta) < 0$).
Since $Z_{\alpha\beta}$ is simply a Wiener process with drift $m$ (recall that the diffusion coefficients $\sigma$ are assumed constant), a standard argument based on Dynkin's formula yields $\ex[\tilde\tau_{a}] = a/m$ and \eqref{eq:dom-mean} follows.
\end{proof}

\subsection{Proof of Theorem \ref{thm:folk}}
Parts 1 and 2 of Theorem \ref{thm:folk} follow readily from Proposition \ref{prop:stability} \textendash\ simply note that the hypothesis of Proposition \ref{prop:stability} is satisfied in both cases.
As such, we only need to show that strict Nash equilibria are stochastically asymptotically stable under \eqref{eq:SRL}.

To that end, let $\eq = (\alpha_{1}^{\ast},\dotsc,\alpha_{N}^{\ast})$ be a strict equilibrium of $\game$ and let $\act_{k}^{\ast} \equiv \act_{k}\exclude\{\alpha_{k}^{\ast}\}$.
Moreover, for all $\alpha\in\act_{k}^{\ast}$, set
\begin{equation}
\label{eq:Zdef}
Z_{k\alpha}
	= \temp_{k} \left( Y_{k\alpha} - Y_{k\alpha_{k}^{\ast}} \right),
\end{equation}
so that $X(t)\to\eq$ if and only if $Z_{k\alpha}(t) \to -\infty$ for all $\alpha\in\act_{k}^{\ast}$, $k\in\play$ (cf. Proposition~\ref{prop:Fenchel}).

Now, fix some tolerance $\eps>0$ and a neighborhood $U_{0}$ of $\eq$ in $\strat$.
Since $\eq$ is a strict equilibrium of $\game$, there exist $m_{k}>0$ and a neighborhood $U\subseteq U_{0}$ of $\eq$ such that
\begin{equation}
\payv_{k\alpha_{k}^{\ast}}(x) - \payv_{k\alpha}(x)
	\geq m_{k}
	\quad
	\text{for all $x\in U$ and for all $\alpha\in\act_{k}^{\ast}$, $k\in\play$}.
\end{equation}
With this in mind, let $M>0$ be sufficiently large so that $X(t) \in U$ if $Z_{k\alpha}(t) \leq -M$ for all $\alpha\in\act_{k}^{\ast}$, $k\in\play$ (that such an $M$ exists is again a consequence of Proposition \ref{prop:Fenchel});
furthermore, with a fair degree of hindsight, assume also that
\begin{equation}
M
	> m_{k}^{-1} \temp_{k}(0) \sigma_{k,\max}^{2} \log(N/\eps)
	\quad
	\text{for all $k=1,\dotsc,N$,}
\end{equation}
where $\sigma_{k,\max}^{2} = \max_{\alpha,x} \sigma_{k\alpha}^{2}(x)$.
We will show that if $Z_{k\alpha}(0) \leq - 2M$, then $X(t) \in U$ for all $t\geq0$ and $Z_{k\alpha}(t)\to-\infty$ with probability at least $1-\eps$.

Indeed, assume that $Z_{k\alpha}(0) \leq -2M$ in \eqref{eq:Zdef} and define the escape time:
\begin{equation}
\label{eq:hitU}
\tau_{U}
	= \inf\{t>0: X(t) \notin U\}.
\end{equation}
The stochastic dynamics \eqref{eq:SRL} then give:
\begin{equation}
d\left( Y_{k\alpha} - Y_{k\alpha_{k}^{\ast}} \right)
	= \big[ \payv_{k\alpha} - \payv_{k\alpha_{k}^{\ast}} \big] \dd t
	+ \sigma_{k\alpha} \dd W_{k\alpha} - \sigma_{k\alpha_{k}^{\ast}} \dd W_{k\alpha_{k}^{\ast}},
\end{equation}
so, for all $t\leq \tau_{U}$, we will have:
\begin{flalign}
\label{eq:Z1}
Z_{k\alpha}(t)
	&= Z_{k\alpha}(0)
	+ \temp_{k}(t) \int_{0}^{t} \left[ \payv_{k\alpha}(X(s)) - \payv_{k\alpha_{k}^{\ast}}(X(s)) \right] \dd s
	+ \temp_{k}(t) \xi_{k}(t)
	\notag\\
	&\leq -2M - \temp_{k}(t) \left[ mt - \xi_{k}(t) \right],
\end{flalign}
where we have set
\begin{equation}
\label{eq:diffnoise-strict}
\xi_{k}(t)
	= \int_{0}^{t} \sigma_{k\alpha}(X(s)) \dd W_{k\alpha}(s) - \int_{0}^{t} \sigma_{k\alpha_{k}^{\ast}}(X(s)) \dd W_{k\alpha_{k}^{\ast}}(s).
\end{equation}

We will first show that $\prob(\tau_{U} < \infty) \leq \eps$.
To that end, note that the time-change theorem for martingales \cite[Cor.~8.5.4]{Oks07} provides a standard Wiener process $\wilde W_{k}(t)$ such that $\xi_{k}(t) = \wilde W_{k}(\rho_{k}(t))$ where $\rho_{k} = [\xi_{k},\xi_{k}]$ is the quadratic variation of $\xi_{k}$.
Then, from the fact that $\temp$ is nonincreasing, we conclude that $Z_{k\alpha}(t) \leq -M$ whenever $m_{k}t - \wilde W_{k}(\rho_{k}(t)) \geq - M/\temp_{k}(0)$.
Accordingly, with $\rho_{k}(t) \leq 2\sigma_{k,\max}^{2}t$ (cf. the proof of Proposition \ref{prop:dom-asymptotics}),
it suffices to show that the hitting time
\begin{equation}
\label{eq:hitline}
\tau_{0}
	= \inf\left\{
	t>0: \wilde W_{k}(t) = \frac{m_{k}t}{2\sigma_{k,\max}^{2}} + \frac{M}{\temp_{k}(0)}
	\;\text{for some $k\in\play$}
	\right\}
\end{equation}
is finite with probability not exceeding $\eps$.
However, if a trajectory of $\wilde W_{k}(t)$ has $\wilde W_{k}(t) \leq m_{k}t/(2\sigma_{k,\max}^{2}) + M/\temp_{k}(0)$ for all $t\geq0$, we will also have
\begin{equation}
\wilde W_{k}(\rho_{k}(t))
	\leq \frac{m_{k} \rho_{k}(t)}{2 \sigma_{k,\max}^{2}} + \frac{M}{\temp_{k}(0)}
	\leq m_{k}t + \frac{M}{\temp_{k}(0)},
\end{equation}
so $\tau_{U}$ must be infinite for every trajectory of $\wilde W = (\wilde W_{1},\dotsc,\wilde W_{N})$ with infinite $\tau_{0}$, i.e. $\prob(\tau_{U}<+\infty) \leq \prob(\tau_{0}<+\infty)$.
Thus, if we write $E_{k}$ for the event that $\wilde W_{k}(t) \geq m_{k}t/(2\sigma_{k,\max}^{2}) + M/\temp_{k}(0)$ for some finite $t\geq0$, the hitting-time analysis of \citet[p.~197]{KS98} for a Brownian motion with drift yields $\prob(E_{k}) = e^{-\lambda_{k} M}$ with $\lambda_{k} = m_{k} / (\temp_{k}(0) \sigma_{k,\max}^{2})$.
Thus, by construction of $M$, we obtain:
\begin{equation}
\prob(\tau_{0} < +\infty)
	= \prob\left( \union\nolimits_{k} E_{k} \right)
	\leq \insum_{k} \prob(E_{k})
	= \insum_{k} e^{-\lambda_{k}M}
	\leq \eps.
\end{equation}
Thus, by conditioning on the event $\tau_{U} = +\infty$ and invoking Assumption \ref{asm:temp} and Lemma \ref{lem:Wbound}, Eq.~\eqref{eq:Z1} finally yields
\begin{equation}
\label{eq:Z2}
Z_{k\alpha}(t)
	\leq -2M - \temp_{k}(t) \cdot \big[ mt - \xi_{k}(t) \big]
	\sim - \temp_{k}(t) \cdot mt
	\to-\infty
	\quad
	\text{(a.s.)}.
\end{equation}
We conclude that $Z_{k\alpha}(t)\to-\infty$ for all $\alpha\in\act_{k}^{\ast}$, $k\in\play$, so $\lim_{t\to\infty} X(t) = \eq$ (conditionally a.s.)
and our proof is complete.

\subsection{Proof of Proposition~\ref{prop:zero-sum}}
Let $\fench_{p}(t) = \insum_{k=1,2} \fench_{k}(p_{k},\temp_{k}(t) Y_{k}(t))$ where $p = (p_{1},p_{2})$ is an interior Nash equilibrium of $\game$.
Using the Itô formula of Lemma \ref{lem:Fenchel-gradient}, we get:
\begin{subequations}
\label{eq:dF}
\begin{flalign}
d\fench_{p}
	&= \sum_{k,\beta} (X_{k\beta} - p_{k\beta}) \dd(\temp_{k} Y_{k})
	+ \frac{1}{2} \sum_{k,\beta} \frac{\pd^{2} h_{k}^{\ast}}{\pd y_{k\beta}^{2}} \temp_{k}^{2} \sigma_{k\beta}^{2} \dd t
	\notag\\
	&\label{eq:dF-temp}
	= \sum_{k=1,2} \dot\temp_{k} \braket{Y_{k}}{X_{k} - p} dt
	\\
	&\label{eq:dF-game}
	+ \sum_{k=1,2} \temp_{k} \braket{\payv_{k}}{X_{k} - p_{k}} dt
	\\
	&\label{eq:dF-noise}
	+ \sum_{k,\beta} \temp_{k} (X_{k\beta} - p_{k\beta}) \sigma_{k\beta} \dd W_{k\beta}
	\\
	&\label{eq:dF-Ito}
	+ \frac{1}{2} \sum_{k,\beta} \frac{\pd^{2} h_{k}^{\ast}}{\pd y_{k\beta}^{2}} \temp_{k}^{2} \sigma_{k\beta}^{2} \dd t,
\end{flalign}
\end{subequations}
where we substituted $dY$ from \eqref{eq:SRL} to obtain \eqref{eq:dF-game} and \eqref{eq:dF-noise}.

We now claim that $\fench_{p}(t)$ grows sublinearly in $t$;
indeed:
\begin{enumerate}
[\itshape a\upshape)]
\item
Lemma \ref{lem:Wbound} shows that $Y(t) = \bigoh(t)$, so there exist $M_{k}>0$, $k=1,2$, such that
\begin{equation}
\sum_{k=1,2} \int_{0}^{t} \dot\temp_{k}(s) \braket{X_{k}(s) - p}{Y_{k}(s)} \dd s
	\leq \sum_{k=1,2} M_{k} \int_{0}^{t} \abs{\dot\temp_{k}(s)} s \dd s
	= o(t),
\end{equation}
where the sublinearity estimate follows from Assumption \ref{asm:temp} and the fact that $\temp_{k}(t)\to 0$:
\begin{equation}
\int_{0}^{t} \dot\temp_{k}(s) s \dd s
	= \temp_{k}(t) t - \int_{0}^{t} \temp_{k}(s) \dd s
	= o(t).
\end{equation}

\item
The term \eqref{eq:dF-game} is identically zero because $\game$ is zero-sum and $p$ is an interior equilibrium of $\game$:
\begin{multline}
\braket{\payv_{1}(X)}{X_{1} - p_{1}} + \braket{\payv_{2}(X)}{X_{2} - p_{2}}
	\\
	= \pay_{1}(X_{1},X_{2}) - \pay_{1}(p_{1},X_{2})
	+ \pay_{2}(X_{1},X_{2}) - \pay_{2}(X_{1},p_{2})
	= 0.
\end{multline}

\item
The term \eqref{eq:dF-noise} is sublinear (a.s.) on account of (the proof of) Lemma \ref{lem:Wbound}.

\item
Finally, for \eqref{eq:dF-Ito},
the same reasoning as in the proof of Theorem \ref{thm:consistent} yields:
\begin{equation}
\label{eq:dF-Ito2}
\frac{1}{2} \sum_{k,\beta} \int_{0}^{t} \frac{\pd^{2} h_{k}^{\ast}}{\pd y_{k\beta}^{2}} \temp_{k}^{2}(s) \sigma_{k\beta}^{2}(X(s)) \dd s
	\leq \frac{1}{2K} \insum_{k,\beta} \sigma_{k,\max}^{2} \int_{0}^{t} \temp_{k}^{2}(s) \dd s,
\end{equation}
with $\sigma_{k,\max}^{2} = \max_{\alpha\in\act_{k}} \max_{x\in\strat} \sigma_{k\alpha}^{2}(x)$ defined as in the proof of Theorem \ref{thm:folk}.
Since $\temp_{k}(t)\to0$, this last integral is also sublinear in $t$, as claimed.
\end{enumerate}

Assume now that $\limsup_{t\to\infty} \fench_{p}(t) \geq m$ for some $m > 0$ (otherwise we would have the stronger result $X(t)\to p$).
Then, Lemma \ref{lem:ydiff} gives $\smallabs{Y_{k\alpha}(t) - Y_{k\beta}(t)} = \bigoh(\fench_{p}(t)) = o(t)$, so our claim follows from Theorem \ref{thm:averages}.

\subsection{Proof of Theorem~\ref{thm:BRD}}
As we mentioned, the proof follows the ideas of \cite{HSV09}.
Specifically, from \eqref{eq:SRL}, we have:
\begin{flalign}
Y_{k}(t)
	&= Y_{k}(0) + \int_{0}^{t} \payv_{k}(X(s)) \dd s + \int_{0}^{t} \sigma_{k}(X(s)) \dd W_{k}(s)
	\notag\\
	&= t \payv_{k}(\bar X(t)) + Y_{k}(0) + \xi_{k}(t),
\end{flalign}
where we have set $\xi_{k\alpha}(t) = \int_{0}^{t} \sigma_{k\alpha}(X(s)) \dd W_{k\alpha}(s)$ and we have used the fact that $\game$ is a $2$-player game in order to carry the integral inside the argument of $\payv_{k}$.
Consequently, by the definition \eqref{eq:choice} of the players' regularized best response maps, $X_{k}(t) = \choice_{k}(\temp_{k}(t) Y_{k}(t))$ solves the (strictly) concave maximization problem:
\begin{equation}
\label{eq:choice1}
\begin{aligned}
\text{maximize}
	&\quad
	\smallbraket{\payv_{k}(\bar X(t))}{x_{k}}
	+ \frac{1}{t} \braket{Y_{k}(0) + \xi_{k}(t)}{x_{k}} - \frac{1}{t \temp_{k}(t)} h_{k}(x_{k}),
	\\
\text{subject to}
	&\quad
	x_{k} \in \strat_{k}.
\end{aligned}
\end{equation}
By Lemma \ref{lem:Wbound} and Assumption \ref{asm:temp}, the last two terms of \eqref{eq:choice1} vanish as $t\to0$.
Hence, by the maximum theorem of \citet[p.~116]{Ber97} applied to the parametric optimization problem \eqref{eq:choice1} with parameter $t$, it follows that $X_{k}(t)$ lies within a vanishing distance of $\argmax_{x_{k}\in\strat_{k}} \smallbraket{\payv_{k}(\bar X(t))}{x_{k}} \equiv \brep_{k}(\bar X(t))$.

On the other hand, differentiating $\bar X(t)$ yields:
\begin{equation}
\frac{d}{dt} \bar X(t)
	= t^{-1} X(t) - t^{-2} \int_{0}^{t} X(s) \dd s
	= t^{-1} \left[ X(t) - \bar X(t) \right],
\end{equation}
and, after changing time to $\tau = \log t$, the expression above becomes $\frac{d}{dt} \bar X(t) = X - \bar X$.
Combining all of the above, we conclude that $\bar X(t)$ tracks a perturbed version of the best reply dynamics \eqref{eq:BRD} in the sense of \citet[Def.~III]{BHS05}, and our assertion follows from Theorem 3.6 in the same paper.

\subsection{Correlated uncertainty}

We provide below a short sketch of the proof of Proposition \ref{prop:corr}, highlighting the points of departure from the proofs of Theorems \ref{thm:consistent}, \ref{thm:dom}, \ref{thm:folk}, and Proposition \ref{prop:zero-sum}.

First, for the no-regret analysis of \eqref{eq:SRL-corr}, note that \eqref{eq:reg} still holds essentially as stated, except for the term \eqref{eq:reg-Ito} which should be replaced by the quantity
\begin{equation}
\frac{1}{2} \sum_{\beta} \int_{0}^{t} \temp(s) \tr[\hess(h^{\ast}(Y(s)) \, \Sigma(s)] \dd s.
\end{equation}
Since the matrices $\hess(h^{\ast})$ and $\Sigma$ are both bounded in norm (by the strong convexity of $h$ and by assumption, respectively), the rest of the proof of Theorem \ref{thm:consistent} goes through exactly as in the uncorrelated case, except that $\sigma_{\max}$ must now be replaced by $\sup_{t} \norm{\Sigma(t)}$.

For the elimination of dominated strategies and the stochastic asymptotic stability of strict equilibria, we first need to replace all terms of the form $\sigma_{k\alpha} d W_{k\alpha}$ in \eqref{eq:diffnoise} and \eqref{eq:diffnoise-strict} by $dZ_{k\alpha}$.
Then, it suffices to note that the rest of the proof of Theorems \ref{thm:dom} and \ref{thm:folk} only relies on the fact that the quadratic variation of $\xi$ satisfies an inequality of the form $d[\xi,\xi] \leq M \dd t$ for some $M>0$ (a consequence of the fact that $\Sigma$ is assumed bounded).

Finally, for our result on time-averages, the only nontrivial change in the proof of Proposition \ref{prop:zero-sum} is that the term $\sum_{\beta} \frac{\pd^{2}h^{\ast}}{\pd y_{\beta}^{2}} \sigma_{\beta}^{2} \dd t$ that appears in \eqref{eq:dF} must be replaced by the correlated variant $\tr[\hess(h^{\ast}(Y(s)) \, \Sigma(s)] \dd s$.
This term can be bounded as in the no-regret case above, so \eqref{eq:dF-Ito2} still holds (but with a different constant than $\sigma_{\max}^{2}/K$);
the rest of the proof then goes through as in the uncorrelated setting.


\section{Auxiliary results}
\label{app:auxiliary}

\subsection{Properties of the Fenchel coupling}
\label{app:Fenchel}

In this appendix, we collect some basic properties of the Fenchel coupling and regularized best response maps.
To state them, let $\simplex$ denote the $d$-dimensional simplex of $\R^{n}$ and let $h\from\simplex\to\R$ be a penalty function on $\simplex$ (i.e. $h$ is continuous and strongly convex on $\simplex$ and smooth on the relative interior of any face of $\simplex$).
Also, recall that the \emph{regularized best response map} $\choice\from\R^{n}\to\simplex$ induced by $h$ is given by
\begin{equation}
\choice(y)
	= \argmax_{x\in\simplex} \{ \braket{y}{x} - h(x)\},
	\quad
	y\in \R^{n},
\end{equation}
while the \emph{Fenchel coupling} between $p\in\simplex$ and $y\in\R^{n}$ is defined as:
\begin{equation}
\fench(p,y)
	= h(p) + h^{\ast}(y) - \braket{y}{p},
\end{equation}
where
\begin{equation}
h^{\ast}(y)
	= \max_{x\in\simplex} \{ \braket{y}{x} - h(x) \}
\end{equation}
denotes the convex conjugate of $h$.
We then have:

\begin{proposition}
\label{prop:Fenchel}
Let $h\from\simplex\to\R$ be a penalty function on $\simplex$ with strong convexity constant $K$.
Then:
\begin{enumerate}
[\indent\upshape(1)]
\item
$\choice$ is $1/K$-Lipschitz and $\choice(y) = dh^{\ast}(y)$.

\item
If $y_{\alpha} - y_{\beta} \to -\infty$ for some $\beta\neq\alpha$, then $\choice_{\alpha}(y) \to 0$.

\item
$\fench(p,y) \geq \frac{1}{2} \norm{\choice(y) - p}^{2}$ for all $p\in\simplex$ and for all $y$;
in particular, $\fench(p,y) \geq 0$ with equality if and only if $p=\choice(y)$.

\item
If $\fench(p,y_{j})\to+\infty$ for some sequence $y_{j}\in \R^{n}$, the sequence $x_{j} = \choice(y_{j})$ converges to the union of faces of $\simplex$ that do not contain $p$;
more precisely: $\liminf_{j\to\infty} \{x_{j,\alpha}:\alpha\in\supp(p)\} = 0$.
\end{enumerate}
\end{proposition}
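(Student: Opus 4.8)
The plan is to treat the four parts in increasing order of difficulty, proving (1) and (3) together from the first-order optimality conditions for \eqref{eq:choice}, then (2) by an elementary mass-transfer argument, and finally (4) by a compactness argument that reduces to an \emph{upper} bound on the Fenchel coupling. For (1) and (3) I would begin from the fact that, since $h$ is strongly convex, the problem defining $\choice(y)$ has a unique solution $x=\choice(y)$, so by the envelope (Danskin) theorem $h^{\ast}$ is differentiable with $\choice(y)=dh^{\ast}(y)$. For the Lipschitz bound I would write the optimality conditions at $x=\choice(y)$ and $x'=\choice(y')$ in subgradient form: there are $g\in\partial h(x)$, $g'\in\partial h(x')$ with $\braket{y-g}{x''-x}\le 0$ and $\braket{y'-g'}{x''-x'}\le 0$ for all $x''\in\simplex$; testing the first at $x''=x'$ and the second at $x''=x$, adding, and combining with the strong-monotonicity estimate $\braket{g-g'}{x-x'}\ge K\norm{x-x'}^{2}$ yields $K\norm{x-x'}^{2}\le\braket{y-y'}{x-x'}\le\norm{y-y'}\,\norm{x-x'}$, i.e. the $1/K$-Lipschitz property. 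For (3) I would use $h^{\ast}(y)=\braket{y}{x}-h(x)$ to rewrite $\fench(p,y)=h(p)-h(x)+\braket{y}{x-p}$, then invoke the strong-convexity inequality $h(p)\ge h(x)+\braket{g}{p-x}+\tfrac{K}{2}\norm{p-x}^{2}$ together with the optimality inequality $\braket{g}{p-x}\ge\braket{y}{p-x}$; substituting, the two $\braket{y}{\cdot}$ terms cancel and leave $\fench(p,y)\ge\tfrac{K}{2}\norm{x-p}^{2}$, from which nonnegativity and the equality case $p=\choice(y)$ are immediate.

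For (2), I would argue by contradiction: suppose $y_{\alpha}-y_{\beta}\to-\infty$ along some sequence $y^{(n)}$ but $\choice_{\alpha}(y^{(n)})\ge\eps$ for infinitely many $n$, and set $x^{(n)}=\choice(y^{(n)})$. Moving the $\alpha$-mass of $x^{(n)}$ onto $\beta$ produces a competitor $\tilde x^{(n)}\in\simplex$ with $x^{(n)}-\tilde x^{(n)}=x^{(n)}_{\alpha}(\bvec_{\alpha}-\bvec_{\beta})$, so optimality of $x^{(n)}$ gives $x^{(n)}_{\alpha}\,(y^{(n)}_{\alpha}-y^{(n)}_{\beta})=\braket{y^{(n)}}{x^{(n)}-\tilde x^{(n)}}\ge h(x^{(n)})-h(\tilde x^{(n)})$. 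The right-hand side is bounded because $h$ is continuous on the compact set $\simplex$, whereas the left-hand side tends to $-\infty$ once $x^{(n)}_{\alpha}\ge\eps$ — a contradiction.

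The substantive part is (4), which I would also prove by contradiction. If the conclusion fails, then $\liminf_{j}\min_{\alpha\in\supp(p)}x_{j,\alpha}=c>0$, so after passing to a subsequence $x_{j}\to\hat x$ with $\supp(p)\subseteq\supp(\hat x)$ and $x_{j,\alpha}\ge c/2$ for all $\alpha\in\supp(p)$ and large $j$. The goal is to contradict $\fench(p,y_{j})\to\infty$ by bounding $\fench(p,y_{j})$ from above. The key computation is that, whenever $S:=\supp(\choice(y))\supseteq\supp(p)$, the coupling collapses to a Bregman divergence along the face $\simplex_{S}$: since $x=\choice(y)$ lies in the relative interior of $\simplex_{S}$, where $h$ is smooth, the first-order condition forces $y_{\gamma}-[\nabla h(x)]_{\gamma}$ to be constant over $\gamma\in S$, and because $x-p$ is supported on $S$ with $\sum_{\gamma\in S}(x_{\gamma}-p_{\gamma})=0$ the multiplier drops out, giving
\[
\fench(p,y)=h(p)-h(x)+\braket{y}{x-p}=h(p)-h(x)-\braket{\nabla h(x)}{p-x}.
\]
I would then show that this quantity stays bounded as $j\to\infty$: the terms $h(p),h(x_{j})$ are bounded by continuity, and in $\braket{\nabla h(x_{j})}{p-x_{j}}=\sum_{\gamma\in S}[\nabla h(x_{j})]_{\gamma}(p_{\gamma}-x_{j,\gamma})$ the coordinates $\gamma\in\supp(p)$ are harmless (they stay bounded away from $0$), while the coordinates $\gamma\in S\setminus\supp(p)$ contribute $-[\nabla h(x_{j})]_{\gamma}\,x_{j,\gamma}$.

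The main obstacle is exactly controlling these last terms as $x_{j,\gamma}\to0$, since $\nabla h$ may blow up at the relative boundary of the face. For the separable penalty functions used throughout the paper this is transparent: the identity becomes $\fench(p,y)=\sum_{\gamma}D_{\theta}(p_{\gamma},x_{\gamma})$ with $D_{\theta}(u,v)=\theta(u)-\theta(v)-\theta'(v)(u-v)$, and the offending terms are $D_{\theta}(0,x_{j,\gamma})=\theta(0)-\theta(x_{j,\gamma})+\theta'(x_{j,\gamma})\,x_{j,\gamma}$, which stay bounded as $x_{j,\gamma}\to0$ because $\theta$ is continuous at $0$ and $z\theta'(z)\to0$ for the (steep) kernels under consideration. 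Hence $\fench(p,y_{j})$ is bounded, contradicting $\fench(p,y_{j})\to\infty$, and (4) follows; the same reasoning shows that the approached faces are precisely those missing some $\alpha\in\supp(p)$, i.e. those not containing $p$.
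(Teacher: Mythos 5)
Your treatment of parts (1)--(3) is correct and in substance identical to the paper's: part (2) is the paper's own mass-transfer argument (the paper shifts mass $\eps$ rather than the full $\alpha$-mass, an immaterial difference), and your subgradient form of strong convexity for part (3) is just a repackaging of the paper's secant argument with $t\to0^{+}$ (both yield the constant $K/2$, which the statement of the proposition suppresses). Also, your unproved claim that $z\theta'(z)\to0$ as $z\to0^{+}$ is in fact automatic and deserves the one-line justification: by convexity, $\theta(z) - \theta(0) \leq z\theta'(z) < 0$ for small $z$, and continuity of $\theta$ at $0$ squeezes $z\theta'(z)\to 0$; in particular no steepness is needed.

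The genuine gap is in part (4): your argument is only completed for \emph{separable} penalties of the form \eqref{eq:decomposable}, whereas the proposition (and its uses in the proofs of Theorems \ref{thm:dom} and \ref{thm:folk}) concerns arbitrary penalty functions, i.e. any $h$ that is continuous, strongly convex, and smooth on the relative interiors of faces, with no separable structure. After your (correct) reduction via multiplier cancellation to $\fench(p,y_{j}) = h(p) - h(x_{j}) - \braket{\nabla h(x_{j})}{p - x_{j}}$ along the face, the terms $-[\nabla h(x_{j})]_{\gamma}\,x_{j,\gamma}$ for $\gamma\in S\setminus\supp(p)$ are exactly where a nonseparable gradient can blow up, and you control them only through the Bregman identity $\fench(p,y) = \sum_{\gamma} D_{\theta}(p_{\gamma},x_{\gamma})$, which has no analogue in general. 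The gap is patchable within your own framework: since $\sum_{\gamma\in S}(x_{\gamma} - p_{\gamma}) = 0$, only \emph{differences} of partial derivatives enter, and a one-dimensional secant bound along $d = \bvec_{\delta} - \bvec_{\gamma}$ with $\delta\in\supp(p)$ (using $x_{j,\delta}\geq c/2$ on your subsequence and convexity of $t\mapsto h(x_{j}+td)$ on $[-x_{j,\delta}, x_{j,\gamma}]$) gives $\abs{x_{j,\gamma}\bigl([\nabla h(x_{j})]_{\delta} - [\nabla h(x_{j})]_{\gamma}\bigr)} = \bigoh(h_{\max}-h_{\min})$, closing the argument. The paper instead sidesteps gradients entirely: it shows that $\braket{y}{p-x} = h'(x;p-x)$ is a finite \emph{two-sided} directional derivative precisely when $\supp(p)\subseteq\supp(x)$ (i.e. $x\in\simplex\exclude\Lambda_{p}$), so that $\fench(p,y) = h(p) - h(x) - h'(x;p-x)$ is finite there, and then derives the contradiction from continuity and compactness on a compact neighborhood of a putative limit point in $\simplex\exclude\Lambda_{p}$. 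Either repair is fine, but as written your proof of (4) establishes a strictly weaker statement than the one claimed.
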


\begin{proof}
The first part of Proposition \ref{prop:Fenchel} is well known \citep[see e.g.][]{Nes09,SS11}.
The rest of our claims are due to \cite{MS16};
we only provide a proof below for completeness and ease of reference.


\paragraph{Part 2}
Let $y_{j}$ be a sequence in $\R^{n}$ such that $y_{j,\alpha} - y_{j,\beta}\to-\infty$, set $x_{j} = \choice(y_{j})$ and assume (by descending to a subsequence if necessary) that $x_{j,\alpha} > \eps >0$ for all $n$.
By definition, we have $\braket{y_{j}}{x_{j}} - h(x_{j}) \geq \braket{y_{j}}{x'} - h(x')$ for all $x'\in\simplex$, so if we set $x_{j}' = x_{j} + \eps(\bvec_{\beta} - \bvec_{\alpha})$, we readily obtain
\begin{equation}
\label{eq:Qcomp2}
\eps (y_{j,\alpha} - y_{j,\beta})
	\geq h(x_{j}) - h(x_{j}')
	\geq - (h_{\max} - h_{\min}).
\end{equation}
This contradicts our original assumption that $y_{j,\alpha} - y_{j,\beta} \to -\infty$;
since $\simplex$ is compact, we get $\eq_{\alpha} = 0$ for any limit point $\eq$ of $x_{j}$, i.e. $\choice_{\alpha}(y_{j})\to0$.

\paragraph{Part 3}
Let $x=\choice(y)$ so that $h^{\ast}(y) = \braket{y}{x} - h(x)$ and $\fench(p,y) = h(p) - h(x) - \braket{y}{p-x}$.
Since $x=dh^{\ast}(y)$ by Part 1, we will also have
\begin{equation}
\label{eq:subgradient}
\braket{y}{x'-x}
	\leq h(x') - h(x)
	\quad
	\text{for all $x'\in\simplex$,}
\end{equation}
by standard convex analysis arguments \citep[Chap.~26]{Roc70}.
On the other hand, letting $z = p-x$, the definition \eqref{eq:strong} of strong convexity yields:
\begin{equation}
\label{eq:Dbound1}
h(x+tz)
	\leq t h(p) + (1-t) h(x) - \tfrac{1}{2} K t(1-t) \norm{z}^{2}
	\quad
	\text{for all $t\in(0,1)$.}
\end{equation}
Hence, combining \eqref{eq:Dbound1} with \eqref{eq:subgradient} for $x' = x + tz$, we get
\begin{equation}
\label{eq:Dbound2}
h(x) + \braket{y}{tz}
	\leq t h(p) +(1-t) h(x) - \tfrac{1}{2} K t(1-t) \norm{z}^{2},
\end{equation}
and, after rearranging and dividing by $t$, we obtain
\begin{equation}
\label{eq:Dbound3}
\fench(p,y)
	= h(p) - h(x) - \braket{y}{z}
	\geq \tfrac{1}{2} K(1-t) \norm{z}^{2}.
\end{equation}
Our claim then follows by letting $t\to0^{+}$ in \eqref{eq:Dbound3}.

\paragraph{Part 4}
Let $\Lambda_{p}$ denote the union of faces of $\simplex$ that do not contain $p$ and set $\simplex_{p} \equiv \simplex\exclude\Lambda_{p}$.
Then, if $x=\choice(y) \in \simplex_{p}$ for some $y\in \R^{n}$, the function $h(x+t(p-x))$ will be finite and smooth for all $t$ in a neighborhood of $0$ (simply note that $x+ t(p-x)$ lies in the relative interior of the same face of $\simplex$ for small $t$).
This implies that $h$ admits a two-sided derivative at $x$ along $p-x$, so we will also have $\braket{y}{p-x} = h'(x;p-x)<+\infty$ by Theorem 23.5 in \cite{Roc70};
in particular, this shows that $\fench(p,y) = h(p) - h(x) - h'(x;p-x) < +\infty$ whenever $x=\choice(y)\in\simplex_{p}$.

Assume now ad absurdum that $x_{j} = \choice(y_{j})$ has a limit point in $\simplex_{p}$, so there exists a compact neighborhood $K$ of $p$ in $\simplex_{p}$ which is visited infinitely often by $x_{j}$.
By continuity and compactness, this implies that there exists some $M>0$ such that $\fench(p,y_{j}) = h(p) - h(x_{j}) - h'(x_{j};p-x_{j}) < M$ infinitely often, a contradiction to our original assumption that $\fench(p,y_{j})\to+\infty$.
We conclude that every limit point of $x_{j}$ lies in $\simplex_{p}$, as claimed.
\end{proof}

For the proof of Proposition \ref{prop:zero-sum}, we also require the following lemma:

\begin{lemma}
\label{lem:ydiff}
Let $p\in\intsimplex$ and assume that the sequence $y_{j}$ has $\fench(p,y_{j}) \geq m$ for some $m>0$.
Then, for all $\alpha,\beta$, there exists some $K>0$ such that $\smallabs{y_{j,\alpha} - y_{j,\beta}} \leq K \fench(p,y_{j})$.
\end{lemma}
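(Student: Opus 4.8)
The plan is to exploit that $\fench(p,\argdot)$ is a convex, coercive function whose growth along the ``dual'' directions is \emph{linear} and, crucially, bounded below by a strictly positive rate precisely because $p$ lies in the relative interior. First I would remove the degeneracy caused by the all-ones direction. Since every $x\in\simplex$ satisfies $\insum_{\gamma} x_{\gamma}=1$, one checks directly that $h^{\ast}(y+c\one)=h^{\ast}(y)+c$ and $\braket{y+c\one}{p}=\braket{y}{p}+c$, so $\fench(p,y+c\one)=\fench(p,y)$; likewise $y_{\alpha}-y_{\beta}$ is invariant under such a shift. Hence I may replace each $y_{j}$ by its projection onto the hyperplane $V=\{y:\insum_{\gamma}y_{\gamma}=0\}$ without affecting either side of the claimed inequality, and assume throughout that $y_{j}\in V$.

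The key estimate is a linear coercivity bound on $V$. Bounding the conjugate from below by its value at the best vertex gives $h^{\ast}(y)\geq\max_{\gamma}y_{\gamma}-h_{\max}$, where $h_{\max}=\max_{x\in\simplex}h(x)$, so that, using also $h(p)\geq h_{\min}=\min_{x\in\simplex}h(x)$,
\[
\fench(p,y)\;\geq\;\big[\max_{\gamma}y_{\gamma}-\braket{y}{p}\big]-(h_{\max}-h_{\min}).
\]
The bracketed quantity $g(y)=\max_{\gamma}y_{\gamma}-\insum_{\gamma}p_{\gamma}y_{\gamma}$ is continuous and positively homogeneous of degree one, and — because $p\in\intsimplex$ has all coordinates strictly positive — it is \emph{strictly positive} for every $y\in V\setminus\{0\}$ (a weighted average with strictly positive weights is strictly below the maximum unless all entries coincide, which on $V$ forces $y=0$). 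Compactness of the unit sphere of $V$ then furnishes a constant $g_{0}>0$ with $g(y)\geq g_{0}\norm{y}$ for all $y\in V$, whence $\fench(p,y)\geq g_{0}\norm{y}-(h_{\max}-h_{\min})$.

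From here the conclusion is routine bookkeeping. On $V$ one has $\smallabs{y_{\alpha}-y_{\beta}}=\smallabs{\braket{y}{\bvec_{\alpha}-\bvec_{\beta}}}\leq\sqrt{2}\,\norm{y}$, while the coercivity bound rearranges to $\norm{y}\leq g_{0}^{-1}\big(\fench(p,y)+(h_{\max}-h_{\min})\big)$. The hypothesis $\fench(p,y_{j})\geq m$ is exactly what lets me absorb the additive constant, since it yields $\fench(p,y_{j})+(h_{\max}-h_{\min})\leq\big(1+(h_{\max}-h_{\min})/m\big)\fench(p,y_{j})$. Combining the three estimates gives $\smallabs{y_{j,\alpha}-y_{j,\beta}}\leq K\,\fench(p,y_{j})$ with $K=\sqrt{2}\,g_{0}^{-1}\big(1+(h_{\max}-h_{\min})/m\big)$, as required.

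I expect the only genuinely delicate point to be the strict positivity of the growth rate $g_{0}$, i.e. the verification that $\max_{\gamma}y_{\gamma}>\insum_{\gamma}p_{\gamma}y_{\gamma}$ for every nonzero $y\in V$. This is where the interiority assumption $p\in\intsimplex$ is indispensable: were some $p_{\gamma}$ to vanish, $g$ could degenerate to zero along directions supported off $\supp(p)$, the linear lower bound would fail, and no such $K$ could exist. Everything else reduces to the convex-analytic bound on $h^{\ast}$ and the compactness argument for $g_{0}$.
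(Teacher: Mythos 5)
Your proof is correct, but it takes a genuinely different route from the paper's. The paper argues by contradiction: assuming $y_{j,\alpha}-y_{j,\beta}\geq K_{j}\fench(p,y_{j})$ along a subsequence with $K_{j}\to\infty$, it relabels indices, splits them into a set with bounded normalized differences and a set with unbounded ones, and then derives a contradiction from the identity $\braket{y_{j}}{p-x_{j}} = h(p)-h(x_{j})-\fench(p,y_{j})$, crucially invoking Part 2 of Proposition \ref{prop:Fenchel} (diverging score differences force $x_{j,\gamma}=\choice_{\gamma}(y_{j})\to0$) together with $p\in\intsimplex$ to make the unbounded summands dominate. You instead give a direct, quantitative argument: after quotienting out the shift direction $\one$ (under which both $\fench(p,\cdot)$ and $y_{\alpha}-y_{\beta}$ are invariant), you establish linear coercivity of $\fench(p,\cdot)$ on the zero-sum hyperplane via the elementary vertex bound $h^{\ast}(y)\geq\max_{\gamma}y_{\gamma}-h_{\max}$ and a compactness argument for the homogeneous function $g(y)=\max_{\gamma}y_{\gamma}-\braket{y}{p}$, whose strict positivity on $V\setminus\{0\}$ is exactly where interiority of $p$ enters \textendash\ playing the same role as the bound $p_{\gamma}-x_{j,\gamma}\geq\delta$ in the paper. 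Your hypothesis usage is also correct: $\fench(p,y_{j})\geq m$ is needed precisely to absorb the additive constant $h_{\max}-h_{\min}$, mirroring its role in the paper's contradiction. What your approach buys is an explicit constant $K=\sqrt{2}\,g_{0}^{-1}\bigl(1+(h_{\max}-h_{\min})/m\bigr)$, uniformity over all pairs $(\alpha,\beta)$, and independence from the machinery of Proposition \ref{prop:Fenchel} \textendash\ indeed, you use only continuity of $h$ on the compact simplex, not strong convexity or differentiability; what the paper's version buys is brevity given that Proposition \ref{prop:Fenchel} is already available and is the workhorse of the surrounding analysis.
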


\begin{proof}
By descending to a subsequence if necessary, assume ad absurdum that $y_{j,\alpha} - y_{j,\beta} \geq K_{j} \fench(p,y_{j})$ for some increasing sequence $K_{j}\to+\infty$.
Then, by relabeling indices (and passing to a finer subsequence if needed), we may assume that
\begin{inparaenum}
[\itshape a\upshape)]
\item
$y_{j,\alpha} \geq y_{j,\gamma} \geq y_{j,\beta}$ for all $\gamma=1,\dotsc,d$;
and
\item
the index set $\{1,\dotsc,d\}$ can be decomposed into two nonempty sets, $\act_{0}$ and $\act_{\infty}$, such that $y_{j,\alpha} - y_{j,\gamma} \leq K' \fench(p,y_{j})$ for some $K'>0$ and for all $\gamma\in\act_{0}$, while $y_{j,\alpha} - y_{j,\gamma} > K_{j}' \fench(p,y_{j})$ for some $K_{j}'\to+\infty$ and for all $\gamma\in\act_{\infty}$.
\end{inparaenum}
Thus, letting $x_{j} = \choice(y_{j})$, we get:
\begin{flalign}
\label{eq:ycomp}
\braket{y_{j}}{p - x_{j}}
	&= \insum_{\gamma=1}^{n} y_{j,\gamma} (p_{\gamma} - x_{j,\gamma})
	= \insum_{\gamma=1}^{n} (y_{j,\gamma} - y_{j,\alpha}) (p_{\gamma} - x_{j,\gamma})
	\notag\\
	&= \sum_{\gamma\in\act_{0}} (y_{j,\gamma} - y_{j,\alpha}) (p_{\gamma} - x_{j,\gamma})
	+ \sum_{\gamma\in\act_{\infty}} (y_{j,\gamma} - y_{j,\alpha}) (p_{\gamma} - x_{j,\gamma}).
\end{flalign}
By construction, the first sum is bounded in absolute value by $2dK'\fench(p,y_{j})$.
As for the second sum, since $\fench(p,y_{j})\geq M > 0$, we will also have $y_{j,\alpha} - y_{j,\gamma} \to +\infty$, so $x_{j,\gamma}\to0$ by Proposition \ref{prop:Fenchel};
with $p\in\intsimplex$, we then get $\liminf_{j} (p_{\gamma} - x_{j,\gamma}) \geq \delta$ for some $\delta>0$, so every $\act_{\infty}$ summand in \eqref{eq:ycomp} is eventually bounded from above by $-\delta K_{j}'\fench(p,y_{j})$.
This shows that $\braket{y_{j}}{p-x_{j}} \leq - K_{j}''\fench(p,y_{j})$ for some positive sequence $K_{j}''\to+\infty$;
however, $\braket{y_{j}}{p - x_{j}} = h(p) - h(x_{j}) - \fench(p,y_{j})$, a contradiction.
\end{proof}

\subsection{Stochastic analysis}
\label{app:probability}

From a stochastic analysis standpoint, the importance of the Fenchel coupling is captured by the following calculation:

\begin{lemma}
\label{lem:Fenchel-gradient}
Let $dY_{\alpha} = \mu_{\alpha} \dd t + \insum_{\beta} \sigma_{\alpha\beta} \dd W_{\beta}$ be an Itô process with values in $\R^{n}$ and set $X(t) = \choice(Y(t))$.
Then, for all $p\in\simplex$, we have:
\begin{equation}
\label{eq:Fenchel-gradient}
d\fench(p,Y)
	= \insum_{\beta} (X_{\beta} - p_{\beta}) \dd Y_{\beta}
	+ \frac{1}{2} \insum_{\beta} \frac{\pd^{2} h^{\ast}}{\pd y_{\beta}^{2}} \sigma_{\beta}^{2} \dd t.
\end{equation}
\end{lemma}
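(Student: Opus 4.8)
The plan is to apply It\^o's formula to the map $y \mapsto \fench(p,y)$ with the first argument $p$ held fixed, viewing it as a (convex) function composed with the Itô process $Y(t)$. The key simplification is that $\fench(p,\cdot)$ has an especially clean gradient. Since $\fench(p,y) = h(p) + h^{\ast}(y) - \braket{y}{p}$ and $p$ is constant, differentiating in $y$ gives
\begin{equation}
\frac{\pd \fench(p,y)}{\pd y_{\beta}}
	= \frac{\pd h^{\ast}}{\pd y_{\beta}}(y) - p_{\beta}
	= \choice_{\beta}(y) - p_{\beta},
\end{equation}
where the last equality is exactly the identity $\choice = dh^{\ast}$ from Part 1 of Proposition \ref{prop:Fenchel}. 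Differentiating a second time, the linear term $-\braket{y}{p}$ contributes nothing, so the Hessian of $\fench(p,\cdot)$ coincides with that of $h^{\ast}$, i.e. $\pd^{2}\fench/\pd y_{\beta}\pd y_{\gamma} = \pd^{2} h^{\ast}/\pd y_{\beta}\pd y_{\gamma}$.

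With these derivatives in hand, I would invoke the multidimensional It\^o formula for $\fench(p,Y(t))$. Because the driving Wiener processes $W_{\beta}$ are independent, the covariation of the components of $Y$ is diagonal, $d[Y_{\beta},Y_{\gamma}] = \sigma_{\beta}^{2}\,\delta_{\beta\gamma}\dd t$, where $\sigma_{\beta}$ is the diffusion coefficient of $Y_{\beta}$. Thus
\begin{equation}
d\fench(p,Y)
	= \insum_{\beta} \frac{\pd \fench}{\pd y_{\beta}} \dd Y_{\beta}
	+ \frac{1}{2} \insum_{\beta,\gamma} \frac{\pd^{2}\fench}{\pd y_{\beta}\pd y_{\gamma}}\, d[Y_{\beta},Y_{\gamma}],
\end{equation}
and substituting the first- and second-order derivatives computed above, together with the diagonal covariation, collapses the double sum to a single sum over $\beta$ and yields precisely \eqref{eq:Fenchel-gradient}. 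The drift $\mu_{\beta}$ and diffusion coefficients then enter only through $\dd Y_{\beta}$ and through $\sigma_{\beta}^{2}$ in the second-order term, exactly as stated.

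The one genuinely delicate point — and the main obstacle — is the regularity of $h^{\ast}$. The conjugate $h^{\ast}$ is differentiable with $dh^{\ast} = \choice$, and by Part 1 of Proposition \ref{prop:Fenchel} the gradient $\choice$ is $1/K$-Lipschitz; hence $h^{\ast}$ is of class $C^{1,1}$ and its Hessian exists Lebesgue-almost everywhere with $\norm{\hess h^{\ast}} \leq 1/K$, but it need \emph{not} be $C^{2}$ (for a non-steep penalty such as the quadratic one, $\choice$ is only piecewise smooth). The classical It\^o formula therefore does not apply verbatim. This is precisely the situation flagged in the Caveat opening this appendix: I would justify the computation by appealing to the nonsmooth It\^o formula of \citet[Theorem 2.1]{AJ05}, which permits treating $h^{\ast}$ as though it were $C^{2}$ inside a stochastic integral exactly because $\choice = dh^{\ast}$ is absolutely continuous. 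The uniform bound $\norm{\hess h^{\ast}} \leq 1/K$ then guarantees that the resulting $dt$-integral is well-defined, and the derivation above becomes rigorous.
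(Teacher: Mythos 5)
Your proposal is correct and follows essentially the same route as the paper: It\^o's formula applied to $\fench(p,\cdot)$, the gradient identity $\choice = dh^{\ast}$ from Part 1 of Proposition \ref{prop:Fenchel}, and the diagonal covariation $dY_{\beta}\cdot dY_{\gamma} = \sigma_{\beta}\sigma_{\gamma}\delta_{\beta\gamma}\dd t$ collapsing the Hessian double sum. Your careful handling of the regularity of $h^{\ast}$ via the nonsmooth It\^o formula of \citet[Theorem 2.1]{AJ05} is exactly the justification the paper itself gives in the Caveat at the start of Appendix \ref{app:main}, so nothing is missing.
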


\begin{proof}
Let $H(t) = \fench(p,Y(t))$, $t\geq0$.
Then, Itô's formula yields:
\begin{flalign}
dH
	&= \insum_{\beta} \frac{\pd\fench}{\pd y_{\beta}} dY_{\beta}
	+ \frac{1}{2} \insum_{\beta,\gamma} \frac{\pd^{2} \fench}{\pd y_{\beta} \pd y_{\gamma}} \dd Y_{\beta}\cdot dY_{\gamma}
	\notag\\
	&= \insum_{\beta} \left(\frac{\pd h^{\ast}}{\pd y_{\beta}} - x_{\beta}\right) \dd Y_{\beta}
	+ \frac{1}{2} \insum_{\beta,\gamma} \frac{\pd^{2} h^{\ast}}{\pd y_{\beta}^{2}} \sigma_{\beta} \sigma_{\gamma} \delta_{\beta\gamma}\dd t
	\notag\\
	&= \insum_{\beta} (X_{\beta} - p_{\beta}) \dd Y_{\beta}
	+ \frac{1}{2} \insum_{\beta} \frac{\pd^{2}h^{\ast}}{\pd y_{\beta}^{2}} \sigma_{\beta}^{2} \dd t,
\end{flalign}
where we used the definition of $\fench$ and Proposition \ref{prop:Fenchel} in the second and third lines.
\end{proof}

Finally, Lemma \ref{lem:Wbound} is a useful order comparison result that we employ throughout:

\begin{lemma}
\label{lem:Wbound}
Let $W(t) = (W_{1}(t),\dotsc,W_{n}(t))$, $t\geq0$, be a Wiener processes in $\R^{n}$ and let $Z(t)$ be a bounded, continuous process in $\R^{n}$.
Then, for every continuous function $f\from[0,\infty)\to(0,\infty)$, we have
\begin{equation}
\label{eq:Wbound}
f(t) + \int_{0}^{t} Z(s) \cdot dW(s)
	\sim f(t)
	\quad
	\text{as $t\to\infty$ \textup(a.s.\textup),}
\end{equation}
whenever
$\lim_{t\to\infty} \left(t\log\log t\right)^{-1/2} f(t) = +\infty$.
\end{lemma}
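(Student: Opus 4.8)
The plan is to reduce everything to a growth estimate on the stochastic integral $M(t) = \int_0^t Z(s) \cdot dW(s)$, since the assertion $f(t) + M(t) \sim f(t)$ is equivalent to $M(t)/f(t) \to 0$. Because $Z$ is bounded and continuous (hence progressively measurable with $\int_0^t \abs{Z}^2 \dd s < \infty$ for all $t$), the process $M$ is a scalar continuous local martingale with quadratic variation $[M,M](t) = \int_0^t \abs{Z(s)}^2 \dd s$; boundedness of $Z$, say $\sup_{s} \abs{Z(s)} \leq C$ almost surely, gives the key domination $[M,M](t) \leq C^2 t$.

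Next I would invoke the time-change theorem for martingales (Dambis\textendash Dubins\textendash Schwarz; cf. its use in the proof of Proposition \ref{prop:dom-asymptotics}) to write $M(t) = \wilde W([M,M](t))$ for a standard one-dimensional Wiener process $\wilde W$. The growth of $M$ is then controlled by the law of the iterated logarithm applied to $\wilde W$, splitting into two cases according to the behavior of the (increasing) quadratic variation. If $[M,M](t) \to \infty$, then for every $\eps > 0$ and all large $t$ the law of the iterated logarithm yields $\abs{M(t)} \leq (1+\eps)\sqrt{2 [M,M](t) \log\log [M,M](t)}$, and substituting the bound $[M,M](t) \leq C^2 t$ together with the eventual monotonicity of $s \mapsto \sqrt{2 s \log\log s}$ shows that $\abs{M(t)} = \bigoh(\sqrt{t\log\log t})$ almost surely. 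If instead $[M,M](t)$ stays bounded, then $\wilde W([M,M](t))$ converges almost surely, so $M$ itself is bounded and the same $\bigoh$ estimate holds a fortiori.

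Combining the two cases, $M(t) = \bigoh(\sqrt{t \log\log t})$ almost surely, whence
\[
\frac{M(t)}{f(t)} = \frac{M(t)}{\sqrt{t\log\log t}} \cdot \frac{\sqrt{t\log\log t}}{f(t)} \to 0,
\]
since the first factor stays bounded while the second vanishes by the hypothesis $(t\log\log t)^{-1/2} f(t) \to +\infty$. This gives $(f(t) + M(t))/f(t) \to 1$, i.e. $f(t) + M(t) \sim f(t)$, as claimed.

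I expect the only delicate point to be the bookkeeping in the time-change step: one must note that $Z \cdot dW$ is \emph{scalar}-valued, so that $M$ is a single real continuous local martingale and Dambis\textendash Dubins\textendash Schwarz produces an honest one-dimensional Brownian motion to which the classical law of the iterated logarithm applies, and one must handle the degenerate case $[M,M](\infty) < \infty$ (including $Z \equiv 0$) separately rather than blindly invoking the law of the iterated logarithm. Everything else is a routine combination of the quadratic-variation domination $[M,M](t)\leq C^2 t$ and the growth hypothesis on $f$.
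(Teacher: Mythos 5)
Your proposal is correct and follows essentially the same route as the paper's own proof: domination of the quadratic variation by $C^{2}t$, the Dambis\textendash Dubins\textendash Schwarz time change to a one-dimensional Brownian motion, the law of the iterated logarithm in the case $[M,M](t)\to\infty$, and a separate (trivial) treatment of the bounded-variation case. The only cosmetic difference is that you extract the intermediate bound $M(t)=\bigoh\bigl(\sqrt{t\log\log t}\bigr)$ before dividing by $f(t)$, whereas the paper bounds the ratio $\smallabs{\wilde W(\rho(t))}/f(t)$ directly; the substance is identical.
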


\begin{proof}
Let $\xi(t) = \int_{0}^{t} Z(s) \cdot dW(s) = \sum_{\alpha=1}^{n} \int_{0}^{t} Z_{\alpha}(s) \dd W_{\alpha}(s)$.
Then, the quadratic variation $\rho = [\xi,\xi]$ of $\xi$ satisfies:
\begin{equation}
\label{eq:covest1}
d[\xi,\xi]
	= d\xi \cdot d\xi
	= \insum_{\alpha=1}^{n} Z_{\alpha} Z_{\beta} \delta_{\alpha\beta} \dd t
	\leq M \dd t,
\end{equation}
where $M = \sup_{t\geq0} \norm{Z(t)}^{2} < +\infty$ (recall that $Z(t)$ is bounded by assumption).
On the other hand, by the time-change theorem for martingales \citep[Cor.~8.5.4]{Oks07}, there exists a Wiener process $\wilde W(t)$ such that $\xi(t) = \wilde W(\rho(t))$, and hence:
\begin{equation}
\frac{f(t) + \xi(t)}{f(t)}
	= 1 + \frac{\wilde W(\rho(t))}{f(t)}.
\end{equation}
Obviously, if $\lim_{t\to\infty} \rho(t) < \infty$,
we will have $\limsup_{t\to\infty} \smallabs{\wilde W(\rho(t))} < \infty$ (a.s.) so there is nothing to show.
Otherwise, if $\lim_{t\to\infty} \rho(t) = \infty$, the quadratic variation bound \eqref{eq:covest1} and the law of the iterated logarithm yield:
\begin{equation}
\frac{\big\vert\wilde W(\rho(t))\big\vert}{f(t)}
	\leq \frac{\big\vert\wilde W(\rho(t)) \big\vert}{\sqrt{2\rho(t) \log \log \rho(t)}}
	\times \frac{\sqrt{2Mt \log \log Mt}}{f(t)}
	\to 0
	\quad
	\text{as $t\to\infty$,}
\end{equation}
and our claim follows.
\end{proof}
\bibliographystyle{elsarticle-harv}
\bibliography{IEEEabrv,Bibliography}

\begin{thebibliography}{54}
\expandafter\ifx\csname natexlab\endcsname\relax\def\natexlab#1{#1}\fi
\expandafter\ifx\csname url\endcsname\relax
  \def\url#1{\texttt{#1}}\fi
\expandafter\ifx\csname urlprefix\endcsname\relax\def\urlprefix{URL }\fi

\bibitem[{Akin(1980)}]{Aki80}
Akin, E., 1980. Domination or equilibrium. Mathematical Biosciences 50~(3-4),
  239--250.

\bibitem[{Alsmeyer and Jaeger(2005)}]{AJ05}
Alsmeyer, G., Jaeger, M., August 2005. A useful extension of it{\^o}'s formula
  with applications to optimal stopping. Acta Mathematica Sinica 21~(4),
  779--786.

\bibitem[{Bena{\"\i}m(1999)}]{Ben99}
Bena{\"\i}m, M., 1999. Dynamics of stochastic approximation algorithms. In:
  Az{\'e}ma, J., {\'E}mery, M., Ledoux, M., Yor, M. (Eds.), S{\'e}minaire de
  Probabilit{\'e}s XXXIII. Vol. 1709 of Lecture Notes in Mathematics. Springer
  Berlin Heidelberg, pp. 1--68.

\bibitem[{Bena{\"\i}m and Faure(2013)}]{BF13}
Bena{\"\i}m, M., Faure, M., 2013. Consistency of vanishingly smooth fictitious
  play. Mathematics of Operations Research 38~(3), 437--450.

\bibitem[{Bena{\"\i}m et~al.(2005)Bena{\"\i}m, Hofbauer, and Sorin}]{BHS05}
Bena{\"\i}m, M., Hofbauer, J., Sorin, S., 2005. Stochastic approximations and
  differential inclusions. SIAM Journal on Control and Optimization 44~(1),
  328--348.

\bibitem[{Berge(1997)}]{Ber97}
Berge, C., 1997. Topological Spaces. Dover, New York.

\bibitem[{B{\"o}rgers and Sarin(1997)}]{BS97}
B{\"o}rgers, T., Sarin, R., 1997. Learning through reinforcement and replicator
  dynamics. Journal of Economic Theory 77~(1), 1--14.

\bibitem[{Cabrales(2000)}]{Cab00}
Cabrales, A., May 2000. Stochastic replicator dynamics. International Economic
  Review 41~(2), 451--81.

\bibitem[{Cominetti et~al.(2010)Cominetti, Melo, and Sorin}]{CMS10}
Cominetti, R., Melo, E., Sorin, S., 2010. A payoff-based learning procedure and
  its application to traffic games. Games and Economic Behavior 70~(1), 71--83.

\bibitem[{Coucheney et~al.(2015)Coucheney, Gaujal, and Mertikopoulos}]{CGM15}
Coucheney, P., Gaujal, B., Mertikopoulos, P., August 2015. Penalty-regulated
  dynamics and robust learning procedures in games. Mathematics of Operations
  Research 40~(3), 611--633.

\bibitem[{Erev and Roth(1998)}]{ER98}
Erev, I., Roth, A.~E., 1998. Predicting how people play games: Reinforcement
  learning in experimental games with unique, mixed strategy equilibria.
  American Economic Review 88, 848--881.

\bibitem[{Friedman(1991)}]{Fri91}
Friedman, D., 1991. Evolutionary games in economics. Econometrica 59~(3),
  637--666.

\bibitem[{Fudenberg and Harris(1992)}]{FH92}
Fudenberg, D., Harris, C., August 1992. Evolutionary dynamics with aggregate
  shocks. Journal of Economic Theory 57~(2), 420--441.

\bibitem[{Fudenberg and Levine(1998)}]{FL98}
Fudenberg, D., Levine, D.~K., 1998. The Theory of Learning in Games. Vol.~2 of
  Economic learning and social evolution. MIT Press, Cambridge, MA.

\bibitem[{Gilboa and Matsui(1991)}]{GM91}
Gilboa, I., Matsui, A., May 1991. Social stability and equilibrium.
  Econometrica 59~(3), 859--867.

\bibitem[{Hannan(1957)}]{Han57}
Hannan, J., 1957. Approximation to {Bayes} risk in repeated play. In: Dresher,
  M., Tucker, A.~W., Wolfe, P. (Eds.), Contributions to the Theory of Games,
  Volume {III}. Vol.~39 of Annals of Mathematics Studies. Princeton University
  Press, Princeton, NJ, pp. 97--139.

\bibitem[{Hofbauer and Imhof(2009)}]{HI09}
Hofbauer, J., Imhof, L.~A., 2009. Time averages, recurrence and transience in
  the stochastic replicator dynamics. The Annals of Applied Probability 19~(4),
  1347--1368.

\bibitem[{Hofbauer and Sandholm(2002)}]{HS02}
Hofbauer, J., Sandholm, W.~H., November 2002. On the global convergence of
  stochastic fictitious play. Econometrica 70~(6), 2265--2294.

\bibitem[{Hofbauer and Sigmund(1998)}]{HS98}
Hofbauer, J., Sigmund, K., 1998. Evolutionary Games and Population Dynamics.
  Cambridge University Press, Cambridge, UK.

\bibitem[{Hofbauer and Sigmund(2003)}]{HS03}
Hofbauer, J., Sigmund, K., July 2003. Evolutionary game dynamics. Bulletin of
  the American Mathematical Society 40~(4), 479--519.

\bibitem[{Hofbauer et~al.(2009)Hofbauer, Sorin, and Viossat}]{HSV09}
Hofbauer, J., Sorin, S., Viossat, Y., May 2009. Time average replicator and
  best reply dynamics. Mathematics of Operations Research 34~(2), 263--269.

\bibitem[{Hopkins(2002)}]{Hop02}
Hopkins, E., November 2002. Two competing models of how people learn in games.
  Econometrica 70~(6), 2141--2166.

\bibitem[{Imhof(2005)}]{Imh05}
Imhof, L.~A., 2005. The long-run behavior of the stochastic replicator
  dynamics. The Annals of Applied Probability 15~(1B), 1019--1045.

\bibitem[{Kang et~al.(2009)Kang, Kelly, Lee, and Williams}]{KKLW09}
Kang, W.~N., Kelly, F.~P., Lee, N.~H., Williams, R.~J., 2009. State space
  collapse and diffusion approximation for a network operating under a fair
  bandwidth-sharing policy. Annals of Applied Probability 19, 1719--1780.

\bibitem[{Karatzas and Shreve(1998)}]{KS98}
Karatzas, I., Shreve, S.~E., 1998. Brownian Motion and Stochastic Calculus.
  Springer-Verlag, Berlin.

\bibitem[{Kelly et~al.(1998)Kelly, Maulloo, and Tan}]{KMT98}
Kelly, F.~P., Maulloo, A.~K., Tan, D. K.~H., 1998. Rate control for
  communication networks: shadow prices, proportional fairness and stability.
  Journal of the Operational Research Society 49, 237--252.

\bibitem[{Khasminskii(2012)}]{Kha12}
Khasminskii, R.~Z., 2012. Stochastic Stability of Differential Equations, 2nd
  Edition. No.~66 in Stochastic Modelling and Applied Probability.
  Springer-Verlag, Berlin.

\bibitem[{Khasminskii and Potsepun(2006)}]{KP06}
Khasminskii, R.~Z., Potsepun, N., 2006. On the replicator dynamics behavior
  under {Stra\-to\-no\-vich} type random perturbations. Stochastic Dynamics 6,
  197--211.

\bibitem[{Kwon and Mertikopoulos(2014)}]{KM14}
Kwon, J., Mertikopoulos, P., 2014. A continuous-time approach to online
  optimization, \url{http://arxiv.org/abs/1401.6956}.

\bibitem[{Lahkar and Sandholm(2008)}]{LS08}
Lahkar, R., Sandholm, W.~H., 2008. The projection dynamic and the geometry of
  population games. Games and Economic Behavior 64, 565--590.

\bibitem[{Lamberton et~al.(2004)Lamberton, Pag{\`e}s, and Tarr{\`e}s}]{LPT04}
Lamberton, D., Pag{\`e}s, G., Tarr{\`e}s, P., 2004. When can the two-armed
  bandit algorithm be trusted? The Annals of Applied Probability 14~(3),
  1424--1454.

\bibitem[{Leslie and Collins(2005)}]{LC05}
Leslie, D.~S., Collins, E.~J., 2005. Individual {$Q$}-learning in normal form
  games. SIAM Journal on Control and Optimization 44~(2), 495--514.

\bibitem[{Littlestone and Warmuth(1994)}]{LW94}
Littlestone, N., Warmuth, M.~K., 1994. The weighted majority algorithm.
  Information and Computation 108~(2), 212--261.

\bibitem[{Mertikopoulos and Moustakas(2009)}]{MM09}
Mertikopoulos, P., Moustakas, A.~L., 2009. Learning in the presence of noise.
  In: GameNets '09: Proceedings of the 1st International Conference on Game
  Theory for Networks.

\bibitem[{Mertikopoulos and Moustakas(2010)}]{MM10}
Mertikopoulos, P., Moustakas, A.~L., 2010. The emergence of rational behavior
  in the presence of stochastic perturbations. The Annals of Applied
  Probability 20~(4), 1359--1388.

\bibitem[{Mertikopoulos and Sandholm(2016)}]{MS16}
Mertikopoulos, P., Sandholm, W.~H., 2016. Learning in games via reinforcement
  and regularization. Mathematics of Operations Research.

\bibitem[{Mertikopoulos and Viossat(2016)}]{MV16}
Mertikopoulos, P., Viossat, Y., March 2016. Imitation dynamics with payoff
  shocks. International Journal of Game Theory 45~(1-2), 291--320.

\bibitem[{Nachbar(1990)}]{Nac90}
Nachbar, J.~H., 1990. Evolutionary selection dynamics in games. International
  Journal of Game Theory 19, 59--89.

\bibitem[{Nagurney and Zhang(1997)}]{NZ97}
Nagurney, A., Zhang, D., 1997. Projected dynamical systems in the formulation,
  stability analysis, and computation of fixed demand traffic network
  equilibria. Transportation Science 31, 147--158.

\bibitem[{Nesterov(2009)}]{Nes09}
Nesterov, Y., 2009. Primal-dual subgradient methods for convex problems.
  Mathematical Programming 120~(1), 221--259.

\bibitem[{{\O ksendal}(2007)}]{Oks07}
{\O ksendal}, B., 2007. Stochastic Differential Equations, 6th Edition.
  Springer-Verlag, Berlin.

\bibitem[{Oyarzun and Ruf(2014)}]{OR14}
Oyarzun, C., Ruf, J., 2014. Convergence in models with bounded expected
  relative hazard rates. Journal of Economic Theory 154, 229--244.

\bibitem[{Rockafellar(1970)}]{Roc70}
Rockafellar, R.~T., 1970. Convex Analysis. Princeton University Press,
  Princeton, NJ.

\bibitem[{Rustichini(1999)}]{Rus99}
Rustichini, A., 1999. Optimal properties of stimulus-response learning models.
  Games and Economic Behavior 29~(1-2), 244--273.

\bibitem[{Samuelson and Zhang(1992)}]{SZ92}
Samuelson, L., Zhang, J., 1992. Evolutionary stability in asymmetric games.
  Journal of Economic Theory 57, 363--391.

\bibitem[{Sandholm(2010)}]{San10}
Sandholm, W.~H., 2010. Population Games and Evolutionary Dynamics. MIT Press,
  Cambridge, MA.

\bibitem[{Shalev-Shwartz(2011)}]{SS11}
Shalev-Shwartz, S., 2011. Online learning and online convex optimization.
  Foundations and Trends in Machine Learning 4~(2), 107--194.

\bibitem[{Sorin(2009)}]{Sor09}
Sorin, S., 2009. Exponential weight algorithm in continuous time. Mathematical
  Programming 116~(1), 513--528.

\bibitem[{Sutton and Barto(1998)}]{SB98}
Sutton, R.~S., Barto, A.~G., 1998. Reinforcement Learning: An Introduction. MIT
  Press, Cambridge, MA.

\bibitem[{Taylor and Jonker(1978)}]{TJ78}
Taylor, P.~D., Jonker, L.~B., 1978. Evolutionary stable strategies and game
  dynamics. Mathematical Biosciences 40~(1-2), 145--156.

\bibitem[{van Kampen(1981)}]{vK81}
van Kampen, N.~G., 1981. It{\^o} versus {Stratonovich}. Journal of Statistical
  Physics 24~(1), 175--187.

\bibitem[{Viossat(2015)}]{Vio15}
Viossat, Y., April 2015. Evolutionary dynamics and dominated strategies.
  Economic Theory Bulletin 3~(1), 91--113.

\bibitem[{Vlasic(2012)}]{Vla12}
Vlasic, A., 2012. Long-run analysis of the stochastic replicator dynamics in
  the presence of random jumps. \url{http://arxiv.org/abs/1206.0344}.

\bibitem[{Vovk(1990)}]{Vov90}
Vovk, V.~G., 1990. Aggregating strategies. In: COLT '90: Proceedings of the 3rd
  Workshop on Computational Learning Theory. pp. 371--383.

\end{thebibliography}

\end{document}